\documentclass[12pt]{amsart}
\usepackage{graphicx} 
\usepackage{xcolor}
\usepackage{adjustbox}
\usepackage{amsmath,amssymb,amsxtra,tikz,stackengine,bm,mathrsfs,bbm,tikz-cd,comment,hyperref,combelow,amsopn,float,anysize}
\usepackage{subcaption}
\usepackage{enumitem}
\usepackage{ytableau}
\usepackage{quiver}
\usetikzlibrary{calc}%
\usetikzlibrary{shapes}%
\usetikzlibrary{patterns}%
\usetikzlibrary{positioning}%
\usetikzlibrary{arrows.meta}
\usetikzlibrary{knots}
\usetikzlibrary{decorations.markings}
\usetikzlibrary{hobby}

\newcommand{\Hilb}{\mathrm{Hilb}}
\newcommand{\C}{\mathbb{C}}
\newcommand{\Z}{\mathbb{Z}}
\newcommand{\R}{\mathbb{R}}
\newcommand{\N}{\mathbb{N}}
\renewcommand{\P}{\mathbb{P}}
\newcommand{\CP}{\mathcal{P}}

\newcommand{\CL}{\mathcal{L}}
\newcommand{\CO}{\mathcal{O}}

\newcommand{\sgn}{\mathrm{sgn}}
\newcommand{\Sym}{\mathrm{Sym}}
\newcommand{\Ker}{\mathrm{Ker}}

\newcommand{\Spec}{\mathrm{Spec}}
\newcommand{\Pic}{\mathrm{Pic}}
\newcommand{\Proj}{\mathrm{Proj}}
\newcommand{\Hom}{\mathrm{Hom}}
\newcommand{\CV}{\mathcal{V}}
\newcommand{\HC}{\mathrm{HC}}

\DeclareMathOperator{\ch}{ch}

\newcommand{\aaa}{\mathbf{a}}
\newcommand{\bbb}{\mathbf{b}}

\newcommand{\Bl}{\mathrm{Bl}}
\newcommand{\BN}{\mathrm{BN}}

\numberwithin{equation}{section}

\newtheorem{theorem}{Theorem}[section]
\newtheorem{proposition}[theorem]{Proposition}
\newtheorem{corollary}[theorem]{Corollary}
\newtheorem{lemma}[theorem]{Lemma}

\theoremstyle{definition}
\newtheorem{remark}[theorem]{Remark}

\newtheorem{definition}[theorem]{Definition}
\newtheorem{example}[theorem]{Example}

\newcommand{\sq}{\square}

\title{Wall-crossing for Hilbert schemes}
\author{Ian Cavey}
\author{Eugene Gorsky}
\author{Alexei Oblomkov}
\author{Joshua P. Turner}

\begin{document}

\maketitle

\begin{abstract}
    The goal of the minimal model program for the Hilbert scheme of points on a surface aims is to describe the (stable) base loci of all divisors, their associated birational models, and the maps between them. We answer all of these questions for the Hilbert scheme of points on the blowup of the affine plane at the origin. The birational models are Brill-Noether loci in a larger Hilbert scheme, and nested variants thereof, and the wall-crossing maps are described as explicit projections. We also establish several new facts about the homogeneous coordinate ring of this Hilbert scheme, including finding the minimal set of line bundles whose sections generate the ring.
\end{abstract}

\section{Introduction}

The Hilbert scheme of $n$ points on a smooth, quasi-projective surface $X$, denoted $\Hilb^n(X)$, is a smooth, quasi-projective variety of dimension $2n$ parameterizing zero dimensional subschemes of $X$ of length $n$. The minimal model program for these Hilbert schemes, initiated in \cite{ABCH} for $X=\P^2$, aims to describe for every effective divisor $D$ on $\Hilb^n(X)$: the (stable) base locus of $D$, the birational model
\begin{equation} 
\Proj\left(\bigoplus_{m\geq 0} H^0(\Hilb^n(X),\CO(mD))\right),
\end{equation}
as well as the relationships between these models as $D$ varies in the effective cone of $\Hilb^n(X)$. In this paper, we carry out a local version of this program, running the minimal model program for the Hilbert scheme of points on $X=\Bl_\mathbf{0}(\C^2)$, the blowup of the affine plane at the origin.

Recall that $\Pic(\Hilb^n(\Bl_{\mathbf{0}}(\C^2)))\simeq \Z^2$ is generated by $\CO(E_n)$ and $\CO(\frac{1}{2}B)$ where $E_n\subseteq \Hilb^n(\Bl_{\mathbf{0}}(\C^2))$ is the divisor of schemes whose support meets the exceptional curve $E\subseteq \Bl_{\mathbf{0}}(\C^2)$, and $B\subseteq \Hilb^n(\Bl_{\mathbf{0}}(\C^2))$ is the divisor of nonreduced schemes \cite{Fogarty2}. To simplify notation, we write $\CO(k,m) := \CO_{\Hilb^n(\Bl_{\mathbf{0}}(\C^2))}(-kE_n-\frac{m}{2} B)$. We also have $\Pic(\Hilb^n(\C^2))\simeq \Z$ generated by $\CO(\frac{1}{2}B)$ where $B\subseteq \Hilb^n(\C^2)$ is defined similarly, and we write $\CO(m) = \CO_{\Hilb^n(\C^2)}(-\frac{m}{2}B)$.

Our results are based on an explicit description of the sections of line bundles on $\Hilb^n(X)$ for any smooth toric surface $X$ obtained in \cite{CGOT}, generalizing a result of Haiman \cite{Haimanqt} in the case $X= \C^2$. With this, we have identifications
\[ H^0(\Hilb^n(\Bl_{\mathbf{0}}(\C^2)),\CO(k,m)) = A^m_{\geq k}(n), \]
and 
\[ H^0(\Hilb^n(\C^2),\CO(m))= A^m(n), \]
where $A^m_{\geq k}(n),A^m(n)\subseteq \C[x_1,y_1,\dots,x_n,y_n]$ are certain subspaces of diagonally alternating or symmetric polynomials (see Section \ref{sec:background} for the precise definition).

Let $X_k = X_k(n)$ denote the closure of the image of the map \[ \iota_k: \Hilb^n(\C^2\setminus\mathbf{0})\to \Hilb^{n+k(k+1)/2}(\C^2) \]
defined by $I\mapsto (x,y)^k \cap I$. Geometrically, the image of $\iota_k$ is the set of subschemes that are disjoint unions of a multiplicity $k$ point at the origin with a length $n$ subscheme supported away from the origin, and $X_k(n)$ is the set of all subschemes that appear as limits of these. We show in Section \ref{sec:BN} that the $X_k$ are \emph{Brill-Noether loci} in the larger Hilbert scheme, however our present results do not make use of this fact.

Our first result shows that the varieties $X_k(n)$ appear as certain birational models of $\Hilb^n(\Bl_{\mathbf{0}}(\C^2))$ extending a result of Haiman for $X_0(n) = \Hilb^n(\C^2)$ \cite{Haimanqt}.

\begin{theorem}\label{thm:main1}
    For any $k\geq 0$, $X_k(n)$ is the birational model for $\Hilb^n(\Bl_{\mathbf{0}}(\C^2))$ corresponding to the line bundle $\CO(k,1)$, i.e.
    \[ X_k(n) \simeq \Proj\left( \bigoplus_{m\geq 0}A^m_{\geq km}(n)\right). \]
\end{theorem}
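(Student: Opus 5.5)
I will model the argument on Haiman's proof that $\Hilb^n(\C^2)\simeq \Proj \bigoplus_m A^m(n)$. There the key input is that $\Hilb^n(\C^2)$ is the blowup of $\Sym^n(\C^2)$ along an ideal generated by $A^1(n)$, and $A^m(n)=(A^1(n))^m$. The plan is to identify the ring $R_k:=\bigoplus_{m\ge0}A^m_{\geq km}(n)$ with the homogeneous coordinate ring of $X_k(n)$ under an ample line bundle. Concretely, $X_k(n)\subseteq \Hilb^{N}(\C^2)$ with $N=n+k(k+1)/2$ inherits the ample bundle $\CO(1)$ from $\Hilb^N(\C^2)$. By Haiman, $\Hilb^N(\C^2)=\Proj\bigoplus_m A^m(N)$, and $\CO(1)$ is very ample relative to $\Sym^N(\C^2)$. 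So $X_k(n)$ is $\Proj$ of the image of $\bigoplus_m A^m(N)$ in the section ring of $X_k(n)$.

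\textbf{Step 1: a restriction map.} Consider the locus $U\subseteq X_k(n)$ given by the image of $\iota_k$. A point of $U$ is the fat point $(x,y)^k$ union $n$ points away from the origin. Pulling a section $f\in A^m(N)$ back along $\iota_k$ amounts to evaluating the alternant-type polynomial with $k(k+1)/2$ of the point variables frozen into the monomials $x^ay^b$, $a+b<k$, of the staircase. This gives a polynomial in the remaining $n$ pairs of variables, antisymmetric (for $m$ odd) or symmetric. The frozen staircase $\Delta_k$ contributes the determinant $\det(x_i^ay_i^b)$ over $\Delta_k\cup\{\text{$n$ points}\}$, and Laplace expansion shows it vanishes to order $\ge km$ in an appropriate sense at the origin. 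I would check, using the explicit description from \cite{CGOT}, that the result lands exactly in $A^m_{\geq km}(n)$. That description characterizes $A^m_{\geq k}(n)$ by vanishing order $\ge k$ along $E$, equivalently a degree/order condition on the monomials after blowing up. The output is a graded ring map $\phi:\bigoplus_m A^m(N)\to R_k$.

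\textbf{Step 2: surjectivity of $\phi$ in every degree (main obstacle).} I need every element of $A^m_{\ge km}(n)$ to arise this way. In degree one, a spanning set of $A^1_{\ge k}(n)$ consists of alternants $\det(x_i^{a_j}y_i^{b_j})$ for $n$-element sets $S$ of monomials satisfying the order condition. I will try to show that each such $S$ equals $S'\setminus\Delta_k$ for a set $S'\supseteq\Delta_k$ of size $N$, with the staircase rows reducing the alternant to the desired one. For higher $m$ I would then need either that $R_k$ is generated in degree one, or at least that the Veronese subring $R_k^{(d)}$ is, since $\Proj$ is insensitive to Veronese subrings. Generation in degree one is where I expect real difficulty. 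My first attempt would be to realize $\Hilb^n(\Bl_\mathbf{0}\C^2)\to X_k(n)$ via $I\mapsto (x,y)^k\cap \pi_*I$ and use that $\CO(k,1)$ is semiample and pulls back from $\CO(1)$. That reduces surjectivity to a cohomology vanishing/normality statement for $X_k(n)$, which I would attack by a torus-fixed-point or Frobenius splitting argument as in Haiman's work.

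\textbf{Step 3: conclude.} The map $I\mapsto (x,y)^k\cap\pi_*(I)$ is a morphism $\Hilb^n(\Bl_\mathbf{0}\C^2)\to X_k(n)$, birational because it is an isomorphism over $\C^2\setminus\mathbf 0$. Under it, $\CO(1)$ pulls back to $\CO(k,1)$, since the vanishing along $E$ accounts for the twist by $-kE_n$. The section ring of $\CO(k,1)$ is $R_k$, and by Steps 1–2 it agrees up to Veronese with the image of $\bigoplus_m A^m(N)$. Together with normality of the target (from the Proj of the integrally closed ring $R_k$, which is integrally closed as a section ring on a normal variety), this identifies $\Proj R_k$ with $X_k(n)$.
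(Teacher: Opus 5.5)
\textbf{There is a genuine gap.} You leave open the statement that all higher degrees of the image come from degree one, and the geometric route you propose for it is false for $k\le n-2$.

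\textbf{What matches the paper.} Your Step~1 and the degree-one part of Step~2 follow the paper's proof. You restrict Haiman's embedding $\Hilb^N(\C^2)=\Proj\bigoplus_m A^m(N)$ and Laplace-expand $\Delta_S=\sum\pm\Delta_{S'}\otimes\Delta_{S''}$ along the first $n$ columns. The precise version of your ``staircase rows reduce the alternant'' is that at the monomial point $Z_k=(x,y)^k$ one has $\Delta_{S''}/\Delta_{S_k}=1$ if $S''=S_k$ and $0$ otherwise. Every other Laplace term dies, so the degree-one image is exactly $A^1_{\geq k}(n)$.

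\textbf{The missing step.} You yourself flag $A^m_{\geq km}(n)=\bigl(A^1_{\geq k}(n)\bigr)^m$ for all $m$ as the main obstacle, and you never prove it. The route you suggest rests on a false premise: for $k\le n-2$ the bundle $\CO(k,1)$ is not semiample, and not even nef.
\begin{itemize}
\item Restrict $\CO(k,1)$ to the $\P^{\ell}\cong\Hilb^{\ell}(E)$ of subschemes consisting of $n-\ell$ fixed points off $E$ plus a length-$\ell$ subscheme of $E$. The restriction is $\CO(k-\ell+1)$, which is negative for $\ell=k+2\le n$.
\item Hence $Z\mapsto I_{Z,k}$ is only a rational map; the colength jumps once $|Z\cap E|\ge k+2$.
\item No morphism $\Hilb^n(\Bl_{\mathbf 0}(\C^2))\to X_k(n)$ can pull $\CO(1)$ back to $\CO(k,1)$, because such a pullback would be nef. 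So $X_k(n)$ lies across walls from $\Hilb^n(\Bl_{\mathbf 0}(\C^2))$ rather than being a contraction of it, and Step~3 as written is false.
\item Your formula is also off. It must be $\{f\in(x,y)^k:\pi^*f/s_E^k\in\mathcal I_Z\}$, not $(x,y)^k\cap\pi_*\mathcal I_Z$. For $n=k=1$ and $Z$ a point of $E$, your version gives $(x,y)$, of colength $1$ instead of $2$.
\item The appeal to normality is circular. $\Proj R_k$ is normal, but a Veronese or finite-birational argument needs normality of $X_k(n)$, which is not known a priori.
\end{itemize}

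\textbf{The missing idea is combinatorial.} By Theorem~\ref{thm:HS lead terms}, the equality $(A^1_{\ge k})^m=A^m_{\ge km}$ follows once every point of $\mathcal P(km,m)$ is a sum of $m$ points of $\mathcal P(k,1)$. Then products of $m$ determinants realize every trailing exponent of $A^m_{\ge km}$. The paper proves this Minkowski decomposition (Proposition~\ref{prop:decomp}, Corollary~\ref{cor:fg on rays}) by division with remainder:
\begin{enumerate}
\item Write $a_i=mq_i+r_i$ and split $a_i$ into $m$ parts, equal to $q_i+1$ for $s<r_i$ and to $q_i$ for $s\ge r_i$.
\item Check that the number of $s$ with $a^{(s)}_j=a^{(s)}_i$ is $\max\{m-(a_i-a_j),0\}$, so the minimal $b$-coordinates split correctly.
\item Put the excess of $b_i$ over its minimum into the summand $s=r_i$.
\item Split the shift $\max\{km-a_i,0\}$ as $\sum_s\max\{k-a^{(s)}_i,0\}$.
\end{enumerate}

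\textbf{How the proof then closes.} With generation in degree one in hand, Step~3 needs neither a morphism nor normality. As your own opening paragraph says, $X_k(n)$ is $\Proj$ of the image of the restriction map. Because $\iota_k$ has dense image, that restriction coincides with pullback along $\iota_k$, so its kernel is the ideal of $X_k(n)$. Its image is generated by $A^1_{\ge k}(n)$, hence equals $R_k$.
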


Theorem \ref{thm:main1} is proved in Section \ref{sec:proof of main thm}. To prove Theorem \ref{thm:main1}, we describe the restriction of sections of line bundles to $X_k(n)\subseteq \Hilb^{n+k(k+1)/2}(\C^2)$ as a ring homomorphism
\[ \bigoplus_{m\geq 0} A^m\left( n+\frac{k(k+1)}{2} \right) \xrightarrow{\iota_k^*} \bigoplus_{m\geq 0} A^m\left( n \right). \] 
We then show that the image of this map, the homogeneous coordinate ring of $X_k(n)$, is
\[ \bigoplus_{m\geq 0}A^m_{\geq km}(n) \]
as desired by explicit computation in degrees $m=0,1$ combined with new finite-generation facts about these rings described below.

Similarly, let $X_{k,k+1} = X_{k,k+1}(n)$ be the closure of the image of the product map 
\[ \iota_k\times\iota_{k+1}:\Hilb^n(\C^2\setminus\mathbf{0})\to \Hilb^{n+k(k+1)/2}(\C^2)\times \Hilb^{n+(k+1)(k+2)/2}(\C^2). \]
For $k=0$ we have $X_{0,1}=\Hilb^{n,n+1}_0(\C^2)$
parametrizing pairs of ideals $(I,J)$ such that $\C[x,y]\supset I\supset J$ and $I/J$ is supported at the origin.

The space $X_{k,k+1}(n)$ has a natural bigraded section ring coming of the restrictions of products of line bundles $\CO(m_1)$ and $\CO(m_2)$, $m_1,m_2\geq 0$, from each factor. Using a similar strategy to that of Theorem \ref{thm:main1} we show the following.

\begin{theorem}\label{thm:main2}
    For any $k\geq 0$, the bigraded section ring of $X_{k,k+1}(n)$ is isomorphic to 
    \[ \bigoplus_{m_1,m_2\geq 0} A^{m_1+m_2}_{\geq km_1+(k+1)m_2}(n). \]
    In particular, for any $m_1,m_2>0$ $X_{k,k+1}(n)$ is the birational model for $\Hilb^n(\Bl_{\mathbf{0}}(\C^2))$ corresponding to the line bundle \[ \CO(km_1+(k+1)m_2,m_1+m_2) = \CO(k,1)^{\otimes m_1}\otimes \CO(k+1,1)^{\otimes m_2}. \]
\end{theorem}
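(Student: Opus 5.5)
We follow the strategy of Theorem \ref{thm:main1}, now in bigraded form. First we set up the restriction map. The line bundle $\CO(m_1)\boxtimes\CO(m_2)$ on $\Hilb^{n+k(k+1)/2}(\C^2)\times\Hilb^{n+(k+1)(k+2)/2}(\C^2)$ has sections $A^{m_1}(n+k(k+1)/2)\otimes A^{m_2}(n+(k+1)(k+2)/2)$. Pulling back along $\iota_k\times\iota_{k+1}$ to $\Hilb^n(\C^2\setminus\mathbf 0)$ and multiplying gives a bigraded ring homomorphism
\[ \bigoplus_{m_1,m_2\ge 0} A^{m_1}\!\left(n+\tfrac{k(k+1)}{2}\right)\otimes A^{m_2}\!\left(n+\tfrac{(k+1)(k+2)}{2}\right)\xrightarrow{\;\phi\;}\bigoplus_{m_1,m_2\geq 0}A^{m_1+m_2}(n). \]
Since $X_{k,k+1}(n)$ is the closure of the image, the bigraded section ring is the image of $\phi$, and $\phi$ factors through the separate maps $\iota_k^*$ and $\iota_{k+1}^*$.

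Next we compute the image, which equals $\iota_k^*(A^{m_1})\cdot\iota_{k+1}^*(A^{m_2})$. Theorem \ref{thm:main1} and its proof show that the image of $\iota_k^*$ in degree $m_1$ is $A^{m_1}_{\geq km_1}(n)$, and similarly $\iota_{k+1}^*$ has image $A^{m_2}_{\geq (k+1)m_2}(n)$. So the bigraded piece of the image is
\[ A^{m_1}_{\geq km_1}(n)\cdot A^{m_2}_{\geq (k+1)m_2}(n). \]
Because the order-of-vanishing filtration along $E$ is multiplicative, this product lies in $A^{m_1+m_2}_{\geq km_1+(k+1)m_2}(n)$. This gives the inclusion of the image into the claimed ring.

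The reverse inclusion is the main obstacle. We need the multiplication map
\[ A^{m_1}_{\geq km_1}(n)\otimes A^{m_2}_{\geq(k+1)m_2}(n)\to A^{m_1+m_2}_{\geq km_1+(k+1)m_2}(n) \]
to be surjective. Geometrically, $\CO(k,1)$ and $\CO(k+1,1)$ span the chamber in which both are nef. We would invoke the finite-generation results the paper establishes for the rings $\bigoplus_m A^m_{\geq \bullet}(n)$, namely generation by sections of a minimal set of line bundles. The expected form is that the full bigraded ring $\bigoplus_{m,d}A^m_{\geq d}(n)$ is generated in degrees $(d,m)$ with $m\in\{0,1\}$. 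Given that, a product of generators of total degree $(km_1+(k+1)m_2,\,m_1+m_2)$ can be regrouped. Each degree-$1$ generator sits in some $A^1_{\geq d}(n)$, and the degree-$0$ factors (functions with $d$ arbitrary) can be redistributed. The task is to assign $m_1$ of the degree-one factors to the $\CO(k,1)$ part and $m_2$ to the $\CO(k+1,1)$ part so that the vanishing orders meet $km_1$ and $(k+1)m_2$ respectively; this is a greedy sorting argument on the degrees $d$. If the available generation statement is weaker, we would instead try to prove this degree-$(1,1)$-type surjectivity directly using the explicit monomial and Schur-type bases of $A^m_{\geq d}(n)$ from \cite{CGOT}, then induct on $m_1+m_2$.

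Finally, we deduce the "in particular" statement. For $m_1,m_2>0$, restricting the bigraded ring to the ray $(tm_1,tm_2)$ gives $\bigoplus_t A^{t(m_1+m_2)}_{\geq t(km_1+(k+1)m_2)}(n)$. This is exactly the section ring of $\CO(km_1+(k+1)m_2,m_1+m_2)$ under the identification $H^0(\Hilb^n(\Bl_{\mathbf 0}(\C^2)),\CO(k,m))=A^m_{\geq k}(n)$. The line bundle $\CO(m_1)\boxtimes\CO(m_2)$ is ample on the product, being the product of the ample (Haiman) bundles. So its restriction to $X_{k,k+1}(n)$ is ample, and the Proj of this Veronese-type subring recovers $X_{k,k+1}(n)$.
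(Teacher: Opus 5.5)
Your setup, your identification of the image as $A^{m_1}_{\geq km_1}(n)\cdot A^{m_2}_{\geq (k+1)m_2}(n)$, the easy inclusion, and your deduction of the ``in particular'' clause all match the paper. The gap is the reverse inclusion, which is the whole content of the theorem, and neither of your two routes establishes it.

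\textbf{The generation route is circular.} Generation of the total ring $\bigoplus_{k,m}A^m_{\geq k}(n)$ by its pieces with $m\in\{0,1\}$ is, in the paper, deduced from exactly the chamber statement you need. Corollary \ref{cor:finite generation degrees} is proved from Theorem \ref{thm:infinite chambers fg}, so you cannot cite it here.

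\textbf{Regrouping fails even granting generation.} A degree-$(d,0)$ factor can only raise the vanishing order of a degree-one factor, via $A^1_{\geq d}(n)\cdot A^0_{\geq 1}(n)\subseteq A^1_{\geq d+1}(n)$; nothing lowers it. Take $k\geq 1$, $g_1\in A^1_{\geq k-1}(n)$ and $g_2\in A^1_{\geq k+2}(n)$. Then $g_1g_2$ lies in $A^2_{\geq 2k+1}(n)$, and it is a product of generators of the type you allow. Yet no assignment of $g_1,g_2$ to the two groups puts it in $A^1_{\geq k}(n)\cdot A^1_{\geq k+1}(n)$, because $g_1$ lies in neither $A^1_{\geq k}$ nor $A^1_{\geq k+1}$ in general. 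Proving $g_1g_2\in A^1_{\geq k}(n)\cdot A^1_{\geq k+1}(n)$ requires rewriting it as a genuinely different sum of products. That is an exchange property, not a sorting argument.

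\textbf{The fallback has the same problem.} Proving the $(1,1)$ case and then inducting on $m_1+m_2$ does not work as stated. An element of $A^m_{\geq K}(n)$ is a sum of products of alternants whose individual factors need not vanish to any prescribed order along $E$. So there is no mechanism to peel off a factor in $A^1_{\geq k}(n)$ or $A^1_{\geq k+1}(n)$ and run the induction.

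\textbf{The missing idea is to work with trailing terms for all $(m_1,m_2)$ simultaneously.} The paper proves (Corollary \ref{cor:Minkowski decomp}) that $\mathcal{P}(km_1+(k+1)m_2,\,m_1+m_2)$ is the Minkowski sum of $m_1$ copies of $\mathcal{P}(k,1)$ and $m_2$ copies of $\mathcal{P}(k+1,1)$. The proof is by division with remainder (Proposition \ref{prop:decomp}), with $m=m_1+m_2$:
\begin{enumerate}
\item Write $a_i=mq_i+r_i$ and split each $a_i$ into $m$ parts $a_i^{(s)}\in\{q_i,q_i+1\}$.
\item Lemma \ref{lem:div with remainder} shows that the minimal $b$-coordinates split compatibly.
\item The excess of $b_i$ over its minimum is placed in the $r_i$-th summand (Lemma \ref{lem:k=0 decomp}).
\item The shift $\max\{K-a_i,0\}$, with $K=km+m_2$, is distributed so that $m_2$ summands land in $\mathcal{P}(k+1,1)$ and $m_1$ summands in $\mathcal{P}(k,1)$.
\end{enumerate}
Trailing terms are multiplicative, so products of the basis determinants $\Delta_S$ of $A^1_{\geq k}(n)$ and $A^1_{\geq k+1}(n)$ realize every point of $\mathcal{P}(K,m)$ as a trailing exponent. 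By Theorem \ref{thm:HS lead terms}, these are all the trailing exponents of $A^m_{\geq K}(n)$. Comparing dimensions in each finite-dimensional bihomogeneous piece then gives $\left(A^1_{\geq k}\right)^{m_1}\left(A^1_{\geq k+1}\right)^{m_2}=A^{m_1+m_2}_{\geq km_1+(k+1)m_2}$. This is Theorem \ref{thm:infinite chambers fg}, exactly the surjectivity your argument needs.
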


These models give a concrete description of the wall-crossing phenomenon as an alternating sequence of natural projection maps, where the rays spanned by the line bundles $\CO(k,1)$ for $k\geq 0$ corresponding to the $X_k$'s, and the chambers between them correspond to the $X_{k,k+1}$'s:
\begin{equation}\label{eq:mapdiagram}
    \begin{tikzcd}
    &X_{0,1} \arrow[dl,"p_{0}"'] \arrow[dr,"q_1"] && X_{1,2} \arrow[dl,"p_1"'] \arrow[dr,"q_{2}"] && X_{2,3} \arrow[dl,"p_{2}"']\arrow[dr,"q_{3}"] \\
    X_0 &&X_1 && X_{2} && \cdots
\end{tikzcd}
\end{equation} 

Sequences of the form \eqref{eq:mapdiagram} have appeared in the work of Bayer, Chen, and Jiang \cite{Bayer} on Brill-Noether loci in Hilbert schemes of points on surfaces, as well as the work of Nakajima and Yoshioka \cite{NY2} on more general moduli spaces of sheaves on blow-ups of projective surfaces. 

Recalling that the ample cone of $\Hilb^n(\Bl_\mathbf{0}(\C^2))$ is the interior of the cone generated by $\CO(n-1,1)$ and $\CO(1,0)$, our description of the (nested) Brill-Noether loci as birational models of $\Hilb^n(\Bl_{\mathbf{0}}(\C^2))$ immediately gives the following stability property for \eqref{eq:mapdiagram}.

\begin{corollary}\label{cor:X_k and blowup}
    For $k\geq n$, we have $X_k(n)\simeq X_{k-1,k}(n)\simeq \Hilb^n(\Bl_{\mathbf{0}}(\C^2)).$
\end{corollary}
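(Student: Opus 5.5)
The plan is to combine Theorems \ref{thm:main1} and \ref{thm:main2} with the standard fact that a projective birational model defined by an ample line bundle recovers the variety itself. The ample cone of $\Hilb^n(\Bl_{\mathbf{0}}(\C^2))$ is the interior of the cone spanned by $\CO(n-1,1)$ and $\CO(1,0)$. Hence $\CO(k,1)$ is ample exactly when $k>n-1$, that is, when $k\geq n$. The case $k=n-1$ sits on the boundary, and this is why the $X_{k-1,k}$ statement needs $k-1\geq n-1$, with the chamber on the ample side.

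\textbf{Step 1: $X_k$ for $k\geq n$.} Since $\CO(k,1)$ is ample, its section ring $\bigoplus_m A^m_{\geq km}(n)$ has $\Proj$ equal to $\Hilb^n(\Bl_{\mathbf{0}}(\C^2))$. Here I use that $\Hilb^n(\Bl_{\mathbf{0}}(\C^2))$ is projective over the affine variety $\Spec(A^0_{\geq 0}(n))$, which is the relevant base since everything is relative. An ample line bundle $L$ on a scheme projective over an affine base satisfies $X\simeq \Proj\bigoplus_m H^0(X,L^m)$, because some power of $L$ is very ample relative to the base and Veronese subrings have the same $\Proj$. Theorem \ref{thm:main1} identifies this $\Proj$ with $X_k(n)$.

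\textbf{Step 2: $X_{k-1,k}$ for $k\geq n$.} By Theorem \ref{thm:main2} applied with $k-1$, the bigraded ring $\bigoplus_{m_1,m_2} A^{m_1+m_2}_{\geq (k-1)m_1+km_2}(n)$ is the Cox-type section ring of the pair of line bundles $L_1=\CO(k-1,1)$ and $L_2=\CO(k,1)$ pulled back to the Hilbert scheme. The bigraded $\Proj$ is computed by any diagonal-type Veronese, for instance $m_1=m_2=m$, so it equals $\Proj\bigoplus_m H^0(L_1^m\otimes L_2^m)$. The bundle $L_1\otimes L_2=\CO(2k-1,2)$ has slope $(2k-1)/2>n-1$ when $k\geq n$, so it is ample, and Step 1's argument gives $\Hilb^n(\Bl_{\mathbf{0}}(\C^2))$. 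Theorem \ref{thm:main2} already asserts that for $m_1,m_2>0$ the model is $X_{k-1,k}(n)$, so we may invoke that directly.

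\textbf{Main obstacle.} The main obstacle is making sure the identification of section rings in the theorems is compatible with the honest section ring on the Hilbert scheme. This compatibility is the content of $H^0(\Hilb^n(\Bl_{\mathbf{0}}(\C^2)),\CO(k,m))=A^m_{\geq k}(n)$ from \cite{CGOT}. It also requires the relative-projectivity statement that justifies $\Proj$ of an ample section ring. Both are standard, so the corollary is essentially immediate.
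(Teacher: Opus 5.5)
Your proposal is correct and is essentially the paper's argument. Theorems \ref{thm:main1} and \ref{thm:main2} identify $X_k(n)$ and $X_{k-1,k}(n)$ with the $\Proj$ of the section rings of $\CO(k,1)$ and of $\CO(k-1,1)^{\otimes m_1}\otimes\CO(k,1)^{\otimes m_2}$ ($m_1,m_2>0$); for $k\geq n$ these lie in the interior of the ample cone, so both models recover $\Hilb^n(\Bl_{\mathbf{0}}(\C^2))$. Your remarks on relative projectivity over $\Spec A^0(n)$ and Veronese invariance of $\Proj$ only spell out the standard fact the paper uses tacitly.
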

Indeed, Theorems \ref{thm:main1} and \ref{thm:main2} realize $X_k(n)$ and $X_{k,k+1}(n)$ as birational models of the variety $\Hilb^n(\Bl_{\mathbf{0}}(\C^2))$ whose ample cone is the interior of the cone generated by $\CO(n-1,1)$ and $\CO(1,0)$. The birational models associated to ample line bundles are isomorphic to $\Hilb^n(\Bl_{\mathbf{0}}(\C^2))$, which gives the corollary.

Corollary \ref{cor:X_k and blowup}  reduces the infinite collection of walls and chambers given by Theorems \ref{thm:main1} and \ref{thm:main2} to a finite collection for the purposes of classifying the birational models of $\Hilb^n(\Bl_{\mathbf{0}}(\C^2))$ up to isomorphism leaving only the rays spanned by $$\CO(1,1),\CO(2,1),\dots,\CO(n-1,1),$$ at least in the range $k,m\geq 0$. For example, Figure \ref{fig:Chambers for n=4} shows this decomposition for $\Hilb^4(\Bl_{\mathbf{0}}(\C^2))$ labeled with the corresponding birational model for each wall and chamber.

\begin{figure}[h]
    \centering
    \begin{tikzpicture}
        \draw[<-,thick] (8,0)--(0,0);
        \draw[->,thick] (0,0)--(0,3);
        \draw[->,thick] (0,0) -- (3,3);
        \draw[->,thick] (0,0) -- (6,3);    
        \draw[->,thick] (0,0) -- (8,2.67);   
        \node[left] at (0,2.5) {$m$};
        \node[below] at (7.5,0) {$k$};
        \foreach \x in {0,1,...,7}
            \foreach \y in {0,1,...,2}{
                \draw (\x,\y) circle (3pt);
            };
        \filldraw (0,0) circle (3pt);
        \filldraw (0,1) circle (3pt);
        \filldraw (1,1) circle (3pt);
        \filldraw (2,1) circle (3pt);
        \filldraw (3,1) circle (3pt);
        \filldraw (1,0) circle (3pt);
        \node at (1.4,2.8) {$\Hilb^{4,5}_0(\C^2)$};
        \node at (4,2.8) {$X_{1,2}$};
        \node at (7,2.8) {$X_{2,3}$};
        \node at (9,1.5) {$\Hilb^4(\Bl_{\mathbf{0}}(\C^2))$};
        \node at (0,3.5) {$\Hilb^4(\C^2)$};
        \node at (3.3,3.3) {$X_1$};
        \node at (6.4,3.3) {$X_2$};
        \node at (8.5,2.9) {$X_3$};
        \node at (9.5,0) {$\Sym^4(\Bl_{\mathbf{0}}(\C^2))$};
    \end{tikzpicture}
    \caption{Walls and chambers in the Picard group and the corresponding birational models for $\Hilb^4(\Bl_{\mathbf{0}}(\C^2))$.}
    \label{fig:Chambers for n=4}
\end{figure}

The following theorem, proved in Section \ref{sec:base locus}, describes of the base locus of these line bundles on $\Hilb^n(\Bl_\mathbf{0}(\C^2))$.

\begin{theorem}\label{thm:base locus decomp}
    For any $k\geq 0$ and $m>0$, the base locus of $\CO(k,m)$ is the locus of subschemes $Z\in \Hilb^n(\Bl_\mathbf{0}(\C^2))$ such that $|Z\cap E|>k/m$, where $|Z\cap E|$ denotes the length of the scheme-theoretic intersection of $Z$ with the exceptional curve $E\subseteq \Bl_{\mathbf{0}}(\C^2)$.
\end{theorem}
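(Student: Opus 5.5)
We prove two inclusions: first that every section of $\CO(k,m)$ vanishes on any $Z$ with $|Z\cap E|>k/m$, and second that any $Z$ with $|Z\cap E|\le k/m$ admits a section not vanishing at it. Throughout we use the identification $H^0(\Hilb^n(\Bl_{\mathbf{0}}(\C^2)),\CO(k,m))=A^m_{\geq k}(n)$ from \cite{CGOT}. Here a polynomial in $A^m(n)$ is viewed as a section on the open locus of schemes avoiding $E$, and it extends to the blowup with order of vanishing at least $k$ along $E_n$.

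\textbf{Vanishing on the bad locus.} On $\Bl_{\mathbf{0}}(\C^2)$ we work in a toric chart, say $(x,t)$ with $y=xt$, where $E=\{x=0\}$. A section $f\in A^m_{\geq k}(n)$ becomes a section of $\CO(m)$ on $\Hilb^n$ of the chart, divided by $\prod_i x_i^{k}$ in the appropriate sense. Concretely, near a scheme meeting $E$ we first reduce to the generic stratum of the locus $\{|Z\cap E|=\ell\}$, using upper semicontinuity of $|Z\cap E|$ and closedness of the base locus. On that stratum we take $\ell$ reduced points on $E$ and the rest generic, and then pass to the general case by closure. Near such a $Z$, $f$ has the form $\prod_{i\le \ell} x_i^{k}\cdot g$ relative to the $\CO(0,m)$ trivialization. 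The key point is that the local generator of $\CO(m)$ on $\Hilb^n(\C^2)$ near reduced points is the Vandermonde-type determinant $\Delta^m$. The total $x$-degree available in the $\ell$ points on $E$ is then bounded: any alternating monomial determinant in $\ell$ variables, raised to the $m$-th power, can absorb at most roughly $m\binom{\ell}{2}$-type degree. The sharper claim we need is that the quotient $f/\text{(generator)}$ has a pole or zero of order $k-m\ell$ along $x_1=\dots=x_\ell=0$. This yields vanishing whenever $m\ell>k$. I expect this local order computation, done correctly on the blown-up chart, to be the main obstacle. A cleaner route is to restrict to a test curve: move the $\ell$ points on $E$ off $E$ along a curve $C$. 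The degree of $\CO(k,m)$ on a suitable complete curve inside $\{|Z\cap E|\ge \ell\}$ is $m\ell\cdot(\text{something})-k$ times a positive constant, and a negative degree on a covering family of curves forces base points. For the curve we would take $E$ itself, moving one point along $E$ with the rest fixed; this gives $\deg\CO(-E_n)|_C=1$ and $\deg\CO(-\tfrac12 B)|_C=$ the number of collisions, which is $\ell-1$ for the reduced configuration. That computation does not obviously produce the threshold $\ell>k/m$, so we would refine it with the punctual family $\mathrm{Hilb}^\ell$ of $E$. The family of subschemes of $E\cong\P^1$ of length $\ell$ has the needed intersection numbers, giving degree $k-m\ell$ up to sign, after checking that this family covers the stratum.

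\textbf{Nonvanishing on the good locus.} For $Z$ with $|Z\cap E|=\ell\le k/m$ we construct explicit sections. We take products of $m$ elements of $A^1_{\geq k_j}(n)$ with $\sum k_j\ge k$ and each $k_j\ge \ell$. Here $A^1_{\ge \ell}$ consists of alternants $\det(x_i^{a_j}y_i^{b_j})$ whose exponents $a_j+b_j$ force total order $\ge\ell$ on $E$. We choose the monomial set as a union of $\ell$ monomials of degree $\ge 1$ restricted to the $E$-points, together with a basis of $\C[x,y]/I_{Z'}$ for the part of $Z$ away from $E$. Such a determinant is nonzero at $Z$: on the chart, its restriction is a product of a local determinant for the $E$-part, which is nonzero because $|Z\cap E|=\ell$ means the monomials $x\cdot t^j$, $j<\ell$, give independent functions on $Z\cap(\text{chart})$, and a Haiman-type determinant for the remaining part. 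The nonvanishing uses the standard fact, going back to \cite{Haimanqt}, that $A^1$ is spanned by determinants over monomial bases of $\C[x,y]/I$. Taking the $m$-th power, or a product of $m$ such determinants, gives a section of $\CO(k,m)$ nonzero at $Z$, since $m\ell\le k$.
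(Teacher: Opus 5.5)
There is a genuine gap, and it sits at the step you yourself flagged. The claim that the punctual family $\Hilb^\ell(E)\cong\P^\ell$ gives degree $k-m\ell$ is wrong.

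Fix $n-\ell$ general points off $E$ and let a length-$\ell$ subscheme of $E\cong\P^1$ vary. This gives $Y'_\ell\cong\P^\ell$, and on it:
\begin{itemize}
\item the class $\CO(1,0)=\CO(-E_n)$ is represented by the divisor of schemes meeting the strict transform of a general line through $\mathbf{0}$, which cuts out a hyperplane;
\item $B$ cuts out the discriminant of binary forms of degree $\ell$, which has degree $2(\ell-1)$, not $2\ell$.
\end{itemize}
Hence $\CO(k,m)|_{Y'_\ell}\cong\CO(k-m(\ell-1))$, which is Lemma \ref{lem:pullback to P^l}. Your own line computation, with its $\ell-1$ collisions, already gave this number; you discarded it only because it did not reproduce the threshold $\ell>k/m$.

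As a result, your vanishing direction is false whenever $m(\ell-1)\le k<m\ell$. The statement in the form you are trying to prove is off by one and cannot be established. Concretely, take $n\ge 2$, $k=0$, $m=1$, and let $Z$ be one point of $E$ together with $n-1$ points off $E$ with distinct nonzero $x$-coordinates. The Vandermonde $\Delta_S$ with $S=\{(0,0),(1,0),\dots,(n-1,0)\}$ lies in $A^1_{\geq 0}(n)$. As the section $1\wedge x\wedge\cdots\wedge x^{n-1}$ of $\det\CO^{[n]}$, it does not vanish at $Z$, even though $|Z\cap E|=1>0=k/m$. For $n=1$ the bundle $\CO(0,1)$ is simply trivial on $\Bl_{\mathbf{0}}(\C^2)$.

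What the paper proves, in Theorem \ref{thm:base locus}, is that $Y_\ell$ lies in the base locus of $\CO(k,m)$ if and only if $k<m(\ell-1)$. So the correct threshold is $|Z\cap E|>k/m+1$.

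The vanishing direction of that result is your test-family argument with the corrected degree. For nonvanishing, the paper takes $\Delta_S\in A^1_{\geq \ell-1}$ with $S\supseteq\{(0,\ell-1),(1,\ell-2),\dots,(\ell-1,0)\}$. In the chart $y=xu$, after dividing by $x^{\ell-1}$, these $\ell$ monomials become $1,u,\dots,u^{\ell-1}$, and every other monomial in $S$ becomes divisible by $x$. So at a general point of $Y'_\ell$ the determinant is block triangular with Vandermonde factor $\prod_{i<j}(u_i-u_j)$. Then $\Delta_S^m$ times a suitable section of $\CO(1,0)^{\otimes(k-m(\ell-1))}$ does the job.

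Your nonvanishing construction has further problems:
\begin{itemize}
\item It insists on $k_j\geq \ell$ because it misses that monomials of degree exactly $\ell-1$ already separate $\ell$ points of $E$.
\item The functions $x\,t^j$ you name vanish identically on $E=\{x=0\}$.
\item A product with $\sum k_j>k$ is divisible by $s_{E_n}$ as a section of $\CO(k,m)$, so it vanishes at every $Z$ meeting $E$.
\item Your splitting into an ``$E$-part'' and a part away from $E$ ignores components of $Z$ at points of $E$ that are not contained in $E$. For a pointwise statement at such $Z$, the spanning argument from the proof of Proposition \ref{prop:ind locus} is the right tool: $W_{\ell-1}$ spans $\C[x,u]/I(Z)$ exactly when $|Z\cap E|\le \ell$.
\end{itemize}
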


To carry the coordinate ring computations needed for the proofs of Theorems \ref{thm:main1} and \ref{thm:main2}, we first establish several facts about the total homogeneous coordinate ring of $\Hilb^n(\Bl_\mathbf{0}(\C^2))$,
\[ \bigoplus_{k,m\in \Z} H^0(\Hilb^n(\Bl_\mathbf{0}(\C^2)),\CO(k,m)) = \bigoplus_{k,m\in \Z} A^m_{\geq k}(n). \]
We summarize these results in the following theorem. Whenever we refer to the generators of a ring of sections, we mean generators as an algebra over the global functions $A^0$ of $\Hilb^n(\Bl_\mathbf{0}(\C^2))$.

\begin{theorem}\label{thm:finitegenerationfacts}
    The following hold:
    \begin{enumerate}
        \item The total homogeneous coordinate ring of $\Hilb^n(\Bl_\mathbf{0}(\C^2))$ is minimally generated by the sections of 
        \[ \CO(0,1),\CO(1,1),\dots,\CO(n-1,1), \text{ and } \CO(1,0),\]
        plus the canonical sections $s_{E_n}$ and $s_B$ of $\CO(-1,0)$ and $\CO(0,-2)$ respectively.
        \item For any two consecutive line bundles  $\CL_1$ and $\CL_2$ from the list $\CO(0,1),$ $\CO(1,1),$ $\dots,$ $\CO(n-1,1)$, and $\CO(1,0)$, the bigraded ring of sections 
        \[ \bigoplus_{d_1,d_2\geq 0} H^0(\Hilb^n(\Bl_\mathbf{0}(\C^2)),\CL_1^{\otimes d_1}\otimes \CL_2^{\otimes d_2}) \]
        is generated by the sections of $\CL_1$ and $\CL_2$.
        \item For any $k,m\geq 0$ the ring of sections 
        \[ \bigoplus_{d\geq 0} H^0(\Hilb^n(\Bl_\mathbf{0}(\C^2)),\CO(k,m)^{\otimes d}) \]
        is generated by the sections of $\CO(k,m)$.
    \end{enumerate}
\end{theorem}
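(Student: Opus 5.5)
We work throughout with the explicit description $H^0(\Hilb^n(\Bl_\mathbf{0}(\C^2)),\CO(k,m)) = A^m_{\geq k}(n)$ from \cite{CGOT}. Here $A^m_{\geq k}(n)$ is the space of polynomials in $A^m(n)$ (diagonally alternating for $m$ odd, symmetric for $m$ even) that vanish to order at least $k$ along the divisor coming from $E$. Concretely, we expect this to mean that in a suitable local coordinate the polynomial lies in a filtration spanned by monomials/determinants $\Delta_S$ whose lowest total degree is at least $k$. Multiplication by $s_{E_n}$ is the inclusion $A^m_{\geq k}\subseteq A^m_{\geq k-1}$, and multiplication by $s_B$ is multiplication by the Vandermonde-type alternant $\Delta$ taking $A^{m}\to A^{m+2}$.

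The plan is to reduce all three statements to a combinatorial statement about bases. Following Haiman's approach for $\C^2$, $A^m(n)$ is spanned, modulo $A^0$-combinations, by products $\Delta_{S_1}\cdots\Delta_{S_m}$ of alternants attached to $n$-element subsets $S_i\subset\N^2$. The first step is to check that $A^m_{\geq k}(n)$ is spanned by those products with $\sum_i d(S_i)\ge k$, where $d(S)=\sum_{(a,b)\in S}(a+b)$ minus the minimal value, i.e.\ the order of vanishing at the origin. We would prove this via a lowest-degree-term (initial form) argument using the torus action, together with Haiman's spanning result.

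Given this, each factor $\Delta_S$ has $d(S)$ between $0$ and $n-1$ per unit of $m$. This is where the extremal line bundles $\CO(j,1)$, $0\le j\le n-1$, come from: a single alternant in $A^1$ vanishing to order exactly $j\le n-1$ is a section of $\CO(j,1)$.

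For part (3) we need the following. A product $\prod_{i=1}^{dm}\Delta_{S_i}$ with $\sum_i d(S_i)\ge dk$ must be regrouped into $d$ blocks of $m$ factors, each block having total order at least $k$. Any excess order is absorbed using $\CO(1,0)$-sections (functions vanishing on $E$), and excess $m$ is absorbed using $s_B$. This is a balancing/partition problem. We would solve it greedily: sort the orders $d(S_i)$ and deal them out round-robin. This fails in general, so the key extra input is an exchange lemma: $\Delta_S\Delta_T$ can be rewritten modulo $A^0$ as a combination of $\Delta_{S'}\Delta_{T'}$ with $d(S')$ and $d(T')$ redistributed arbitrarily, keeping the sum, within $[0,n-1]$. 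We expect to prove this by moving boxes between the two subsets using symmetric function identities (polarization) in $A^0$.

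Parts (1) and (2) follow by the same mechanism. Any product is a product of factors $\Delta_S\in H^0(\CO(d(S),1))$ times elements of $H^0(\CO(1,0))$, which gives generation in (1). For minimality, we observe that $\CO(j,1)$ for $1\le j\le n-1$ lies strictly inside the cone spanned by its neighbours, and we show that its sections are not products. For this it suffices to exhibit a $\Delta_S$ with $d(S)=j$ not lying in $A^0\cdot(\text{decomposables})$, which we would check by degree reasons in the bigrading. For (2), the exchange lemma lets us equalize orders into the two consecutive values $j,j+1$, or into $n-1$ and factors of $\CO(1,0)$ in the last chamber.

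The main obstacle is the spanning description of $A^m_{\geq k}$ together with the exchange lemma. Controlling orders of vanishing after taking $A^0$-linear combinations is delicate, since cancellation could raise the order. We would first try to handle this with a Gröbner/leading-term argument in the $\mathbb{C}^*$-weight attached to the exceptional divisor.
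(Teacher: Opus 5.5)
Your plan has a genuine gap, starting with the invariant it is built on. By definition, $\Delta_S\in A^1_{\geq k}$ exactly when $a+b\geq k$ for \emph{every} $(a,b)\in S$. So the relevant order of $\Delta_S$ is $\min_{(a,b)\in S}(a+b)$, not the total degree minus its minimum. For example, with $n=2$ and $S=\{(0,0),(5,0)\}$ your $d(S)=4$, yet $\Delta_S=x_2^5-x_1^5\notin A^1_{\geq 1}$.

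These orders are unbounded, so nothing a priori confines factors to $\CO(j,1)$ with $j\leq n-1$. That the list stops at $n-1$ is a theorem: $A^1_{\geq k+1}=A^1_{\geq k}\cdot A^0_{\geq 1}$ for $k\geq n-1$. The paper derives it from $\mathcal{P}(k+1,1)=\mathcal{P}(k,1)+\mathcal{P}(1,0)$. Among the $k+2>n$ columns $a=0,\dots,k+1$ some column contains no $a_i$; one subtracts $(0,1)$ from the points to its left and $(1,0)$ from those to its right. Your proposal has no substitute for this. Separately, $s_B\in H^0(\CO(0,-2))$ acts as the inclusion $A^m_{\geq k}\subseteq A^{m-2}_{\geq k}$, not as multiplication by an alternant.

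Second, your exchange lemma is false, even with the correct notion of order and staying within $[0,n-1]$: orders can be balanced but not unbalanced. Take $n=3$. Then $\Delta_{\{(0,1),(1,0),(2,0)\}}^2\in(A^1_{\geq 1})^2$ has bidegree $(6,2)$. But every element of total degree $8$ in $A^0\cdot A^1_{\geq 0}\cdot A^1_{\geq 2}$ is a scalar multiple of $\Delta_{\{(0,0),(1,0),(0,1)\}}\Delta_{\{(0,2),(1,1),(2,0)\}}$, which has bidegree $(4,4)$. What is true is the balancing statement. For $k=qm+r$ with $0\leq r<m$, $\mathcal{P}(k,m)$ is the Minkowski sum of $r$ copies of $\mathcal{P}(q+1,1)$ and $m-r$ copies of $\mathcal{P}(q,1)$. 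The paper proves this by division with remainder on the $a$-coordinates plus a careful reassignment of the $b$-coordinates.

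Finally, the step you flag as the main obstacle is the heart of the matter: controlling cancellation when spanning $A^m_{\geq k}$ by products. Haiman's spanning result plus an unspecified initial-form argument does not settle it. The paper's mechanism is the input from \cite{CGOT} that the lexicographic trailing exponents of $A^m_{\geq k}$ are exactly $\mathcal{P}(k,m)$. Given the Minkowski decomposition, products of basis alternants realize every such exponent. Hence $(A^1_{\geq q})^{m_1}(A^1_{\geq q+1})^{m_2}\subseteq A^{m_1+m_2}_{\geq qm_1+(q+1)m_2}$ have the same trailing exponents and coincide; compare dimensions in each finite-dimensional bigraded piece. This gives (2), and (3) for $m>0$ by taking $q=\lfloor k/m\rfloor$. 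Minimality in (1) follows, as you suggest, from a degree comparison: for $k<n-1$, $A^1_{\geq k+1}$ contains elements of smaller total degree than anything in $A^1_{\geq k}\cdot A^0_{\geq 1}$.
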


We prove these algebraic statements combinatorially using the methods developed in \cite{CGOT}. Specifically, we identified the sets $\mathcal{P}(k,m)\subseteq \Z^{2n}_{\geq 0}$ of trailing term exponents of elements of $A^m_{\geq k}$ with respect to a certain term order. In Section \ref{sec:division}, we give several additivity properties about these sets from which we deduce Theorem \ref{thm:finitegenerationfacts}.

This combinatorial approach allows us to characterize \emph{Newton-Okounkov bodies} of the models $X_k$ and $X_{k,k+1}(n)$, extending our previous results for $\Hilb^{n,n+1}_0(\C^2)$ and $\Hilb^n(\Bl_\mathbf{0}(\C^2))$ \cite{CGOT}. This application is discussed in Section \ref{sec:NObodies}. We also show in this section the walls and chambers determining the birational models and base loci of line bundles on $\Hilb^n(\Bl_\mathbf{0}(\C^2))$ described above coincides with the regions on which these Newton-Okounkov bodies vary linearly. And moreover, we give in Proposition \ref{prop:base locus analogue for NO body} a combinatorial analogue of our description of base loci of line bundles on $\Hilb^n(\Bl_\mathbf{0}(\C^2))$. This combinatorial model was useful in guessing the geometric statements in Theorems \ref{thm:main1}, \ref{thm:main2}, and \ref{thm:base locus decomp}. 

Finally, in Section \ref{sec: character} we give a combinatorial formula for the $(\C^{\times})^2$-equivariant holomorphic Euler characteristics of ample line bundles on the varieties $X_{k,k+1}$ as a weighted sum over $\mathcal{P}(k,m)$. The sets $\mathcal{P}(k,m)$ can be partitioned into subsets on which the weighted sum is an easily computable rational function (see Example \ref{ex:weightedcount}). On the other hand, these rational functions can be computed using the equivariant localization formula, and comparing these two expressions gives nontrivial identities.

\section*{Acknowledgments}

The work of E. G. was partially supported by the NSF grant DMS-2302305. The work of J. P. T. was supported in part by the Pacific Institute for the Mathematical Sciences.

\section{Background}\label{sec:background}

\subsection{Hilbert schemes}

Let $\sgn(m)=\sgn^{\otimes m}$ denote the one-dimensional representation of $S_n$ which is trivial for $m$ even and sign for $m$ odd. Similarly, if $V$ is a representation of $S_n$ then the $\sgn(m)$ component of $V$ is $V^{S_n}$ if $m$ is even and $V^{\sgn}$ if $m$ is odd. Consider the diagonal action of $S_n$ on $\C[x_1,\ldots,x_n,y_1,\ldots,y_n]$ which permutes $x_i, y_i$ simultaneously. We define the bigrading on these polynomial rings by $\deg(x_i)=\deg(x)=q$ and $\deg(y_i)=\deg(y)=t$, and note that the action of $S_n$ preserves the grading. We will simply refer to $\deg$ as to degree.

\begin{definition}
Let $A\subset \C[x_1,\ldots,x_n,y_1,\ldots,y_n]$ be the subspace of antisymmetric polynomials.
\end{definition}

\begin{definition}
\label{def: delta S}
Let $S\subseteq \Z^2_{\ge 0}$ be an $n$-element subset. We will always assume without loss of generality that the elements of $S$ are labeled in increasing lexicographic order: $S=\{(a_1,b_1),\ldots,(a_n,b_n)\}$, and define $\Delta_{S}=\det(x_i^{a_j}y_i^{b_j})$. 
\end{definition}

Note that a different choice of ordering for elements of $S$ yields the same $\Delta_S$ up to a sign. One can think of $\Delta_S$ as the antisymmetrization of a monomial $x_1^{a_1}y_1^{b_1}\cdots x_n^{a_n}y_n^{b_n}$, which implies that $\Delta_S$ form a homogeneous basis of $A$. 

\begin{definition}
We define the integer powers of $A$ as follows. If $m\le 0$ then $A^m$ is the $\sgn(m)$ component of $\C[x_1,\ldots,x_n,y_1,\ldots,y_n]$. If $m>0$ then $A^m$ is the span of products of $m$-tuples of elements of $A$.
\end{definition}

In particular, $A^0=\C[x_1,\ldots,x_n,y_1,\ldots,y_n]^{S_n}$. 

\begin{theorem}\cite{Haiman,Haimanqt}\label{thm:secC2HS}
For all $m\in \Z$ one has
$$
H^0(\Hilb^n(\C^2),\CO(m))=A^m.
$$
\end{theorem}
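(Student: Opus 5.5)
This is Haiman's theorem, so the plan follows the structure of his argument in \cite{Haiman,Haimanqt}. We realize sections via the isospectral Hilbert scheme and the Procesi bundle. Let $X_n=\Hilb^n(\C^2)$ and $\rho:X_n\to \Sym^n(\C^2)$ the Hilbert--Chow morphism. Let $X'_n$ be the isospectral Hilbert scheme, i.e. the reduced fiber product $X_n\times_{S^n\C^2}(\C^2)^n$, with finite map $\sigma:X'_n\to X_n$. The key input is Haiman's theorem that $X'_n$ is Cohen--Macaulay and Gorenstein, and hence that $P=\sigma_*\CO_{X'_n}$ is a rank $n!$ vector bundle carrying the regular representation of $S_n$ on fibers. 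Then $\CO(1)$ is identified with $P^{\sgn}=\det$ of the tautological bundle, so that $H^0(X_n,\CO(m))$ can be computed as $S_n$-isotypic pieces of $H^0(X'_n,\sigma^*\CO(m))$.

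\textbf{Case $m\le 0$.} The space $H^0(\CO(m))$ consists of rational functions on $X_n$ with poles of order at most $|m|/2$ along $B$, twisted by the sign character. Since $X_n\to \Sym^n\C^2$ is an isomorphism away from codimension $2$ outside $B$, we argue by pulling back to $(\C^2)^n$ minus the big diagonal and using normality. Concretely, $H^0(X_n,\CO(m))=H^0(X'_n,\CO)^{\sgn(m)}$ twisted appropriately. For $m=0$ this is $\C[\mathbf x,\mathbf y]^{S_n}$ because $\rho_*\CO=\CO$ on the normal variety $\Sym^n$. For $m<0$, the sections of $\CO(-\tfrac{|m|}{2}B)$ pull back to $\sgn(m)$-isotypic polynomials on $(\C^2)^n$. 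This uses that $\Delta=\prod(x_i-x_j)$-type vanishing on the diagonal corresponds to $B$, and that $H^0(X'_n,\CO)=\C[\mathbf x,\mathbf y]$, which holds by normality of $X'_n$ (a consequence of Cohen--Macaulayness).

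\textbf{Case $m>0$.} Here $\CO(1)$ is very ample relative to $\rho$. Indeed, $X_n=\Proj\bigoplus_d A^d$ is Haiman's description of $X_n$ as the blowup of $\Sym^n\C^2$ along the ideal generated by $A$, i.e. by $J=A\cdot \C[\mathbf x,\mathbf y]$ restricted to the symmetric part. So $H^0(\CO(m))$ contains $A^m$, and the claim reduces to equality. This is the step I expect to be the main obstacle. It amounts to showing that the graded ring $\bigoplus_m A^m$ is integrally closed, i.e. that the blowup algebra equals the section ring. Haiman's route: $H^0(X_n,\CO(m))=H^0(X'_n,\sigma^*\CO(m))^{\sgn^m}$. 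The Gorenstein property of $X'_n$ together with vanishing $H^i(X_n,P\otimes\CO(m))=0$ for $i>0$ (via the polygraph theorem) gives $H^0(X'_n,\sigma^*\CO(m))=J^m$, where $J=\bigcap_{i<j}(x_i-x_j,y_i-y_j)$. Taking $\sgn^m$-invariants then gives $(J^m)^{\sgn^m}=A^m$, which uses the identity $J^m=(A^m)\C[\mathbf x,\mathbf y]$ from the polygraph theorem.

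We therefore cite \cite{Haiman,Haimanqt} for the polygraph theorem and the resulting equality $J^d=\bigcap_{i<j}(x_i-x_j,y_i-y_j)^d$. With that in hand, the rest is the isotypic bookkeeping above.
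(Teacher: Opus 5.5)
The paper gives no proof of this statement; it simply quotes Haiman. Your plan of reducing to Haiman's isospectral Hilbert scheme is the right one, and your case $m\le 0$ is essentially fine. You also correctly identify the crux for $m>0$: integral closedness of $\bigoplus_m A^m$, i.e.\ $H^0(X'_n,\sigma^*\CO(m))=J^m$.

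\textbf{The gap.} The justification you give for that step does not work. Vanishing of $H^i(X_n,P\otimes\CO(m))$ for $i>0$, with or without the Gorenstein property and duality, says nothing about which functions occur in $H^0$. At most it computes an Euler characteristic, which you would then have to match against the character of $J^m$. What actually does the work is the following.
\begin{enumerate}
\item[(i)] Haiman's identification of $X'_n$ with the blowup $\Proj\bigoplus_d J^d$ of $\C^{2n}$ along $J$. This makes $\sigma^*\CO(m)$ the invertible ideal sheaf $J^m\CO_{X'_n}$, with $S_n$-linearization twisted by $\sgn^m$. It is also what lets you regard its sections as polynomials at all, and it is missing from your sketch.
\item[(ii)] The equality $J^m=\bigcap_{i<j}(x_i-x_j,y_i-y_j)^m$.
\end{enumerate}
Given (i), take a section $f\in H^0(X'_n,J^m\CO_{X'_n})\subseteq\C[\mathbf x,\mathbf y]$. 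For every valuation $v$ nonnegative on $\C[\mathbf x,\mathbf y]$, $v$ has a center on $X'_n$ by properness, so $v(f)\ge v(J^m)$, and hence $f\in\overline{J^m}$. By (ii), and since powers of the linear primes $(x_i-x_j,y_i-y_j)$ are integrally closed, $\overline{J^m}=J^m$. Reynolds averaging with $J^m=\C[\mathbf x,\mathbf y]A^m$ then gives $(J^m)^{\sgn^m}=\C[\mathbf x,\mathbf y]^{S_n}A^m=A^m$.

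So your final citation of $J^d=\bigcap_{i<j}(x_i-x_j,y_i-y_j)^d$ is the right ingredient. But what remains after it is this integral-closure argument, not ``isotypic bookkeeping''. Neither the Gorenstein property nor the vanishing theorem is needed.

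\textbf{Smaller points.}
\begin{enumerate}
\item[(a)] Normality of $X'_n$ does not follow from Cohen--Macaulayness alone; Serre's criterion also needs $R_1$. You also do not need normality for $H^0(X'_n,\CO)=\C[\mathbf x,\mathbf y]$, which holds because $X'_n\to\C^{2n}$ is proper and birational onto a normal variety.
\item[(b)] For $m<0$ the bundle is $\CO(\frac{|m|}{2}B)$, not $\CO(-\frac{|m|}{2}B)$.
\item[(c)] For odd $m<0$ you only argue $H^0(\CO(m))\subseteq\C[\mathbf x,\mathbf y]^{\sgn}$. The reverse inclusion follows, for example, from $A\subseteq H^0(\CO(1))$ together with multiplication by a suitable power of the canonical section $s_B$ of $\CO(B)=\CO(-2)$, which is an isomorphism away from $B$.
\end{enumerate}
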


\begin{definition}
    For integers $k,m$, we define $A^m_{\geq k}\subseteq A^m$ to be the subset of polynomials for which every term $x_1^{a_1}y_1^{b_1}\cdots x_n^{a_n}y_n^{b_n}$ has $a_i+b_i\geq k$ for all $i$.
\end{definition}

In a previous work, we established the following generalization of Theorem \ref{thm:secC2HS}.

\begin{theorem}\cite{CGOT}
    For all $k,m\in \Z$ one has
    \[ H^0(\Hilb^n(\Bl_{\mathbf{0}})(\C^2),\CO(k,m)) = A^m_{\geq k}. \]
\end{theorem}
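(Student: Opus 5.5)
The plan is to compare the two Hilbert schemes on their common open subset. Set $U=\Hilb^n(\C^2\setminus\mathbf{0})$. It sits as an open subset both in $\Hilb^n(\C^2)$ and in $\Hilb^n(\Bl_{\mathbf{0}}(\C^2))$, since $\Bl_{\mathbf{0}}(\C^2)\setminus E\simeq \C^2\setminus \mathbf{0}$.

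\begin{itemize}
\item In $\Hilb^n(\C^2)$ the complement of $U$ is the locus of schemes meeting the origin, which has codimension $2$.
\item In $\Hilb^n(\Bl_{\mathbf{0}}(\C^2))$ the complement of $U$ is the divisor $E_n$, which is irreducible: its generic point is a reduced scheme with exactly one point on $E$.
\item The line bundles $\CO(m)$ and $\CO(k,m)$ both restrict on $U$ to the same bundle $\CO(-\tfrac m2 B)|_U$, since $E_n\cap U=\emptyset$.
\end{itemize}

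Both Hilbert schemes are smooth. By Hartogs and Theorem \ref{thm:secC2HS}, we get $H^0(U,\CO(m))=H^0(\Hilb^n(\C^2),\CO(m))=A^m$. Hence every section of $\CO(k,m)$ on $\Hilb^n(\Bl_{\mathbf{0}}(\C^2))$ restricts to an element of $A^m$, and this restriction is injective. Conversely, an $f\in A^m$ is a rational section of $\CO(0,m)$ on $\Hilb^n(\Bl_{\mathbf{0}}(\C^2))$ that is regular off $E_n$. Smoothness and irreducibility of $E_n$ then give
\[ H^0(\Hilb^n(\Bl_{\mathbf{0}}(\C^2)),\CO(k,m))=\{f\in A^m : \operatorname{ord}_{E_n}(f)\ge k\}. \]
So the theorem reduces to the valuation formula
\[ \operatorname{ord}_{E_n}(f)=\min\{a_i+b_i : x_1^{a_1}y_1^{b_1}\cdots x_n^{a_n}y_n^{b_n}\text{ a term of } f,\ 1\le i\le n\}. \]
For $k\le 0$ the formula has to be read with $A^m_{\ge k}=A^m$, which is consistent because the right-hand side is always $\ge 0$.

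To compute $\operatorname{ord}_{E_n}$, work near a generic point $Z_0$ of $E_n$. There $Z_0$ is reduced: one point $p_1\in E$ and points $p_2,\dots,p_n$ in general position off $E$. Near $Z_0$ the Hilbert scheme is locally $\Bl_{\mathbf{0}}(\C^2)\times(\C^2)^{n-1}$, with local coordinates $(u,v)$ at $p_1$ given by the blowup chart $x_1=u$, $y_1=uv$, where $E=\{u=0\}$, and with coordinates $(x_i,y_i)$ for $i\ge2$.

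The bundle $\CO(-\tfrac m2B)$ must be trivialized near $Z_0$ compatibly with the identification of Theorem \ref{thm:secC2HS}. Away from $B$, a section $f\in A^m$ corresponds to $f/\Delta_0^m$ times a local generator, for any fixed alternant $\Delta_0$ not vanishing at $Z_0$. Take $\Delta_0=\Delta_S$ with $(0,0)\in S$ and the other elements of $S$ chosen so that the minor in the variables $i\ge 2$ is nonzero at $p_2,\dots,p_n$. Then $\Delta_0$ has a nonzero term free of $x_1,y_1$, so $\operatorname{ord}_{E_n}\Delta_0=0$, and therefore $\operatorname{ord}_{E_n}(f)=\operatorname{ord}_u f(u,uv,x_2,y_2,\dots)$.

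Now write $f=\sum_{d} f_d$, where $f_d$ is homogeneous of degree $d$ in $(x_1,y_1)$ with coefficients in the other variables. Substituting gives $f_d(u,uv,\dots)=u^d f_d(1,v,\dots)$. The lowest nonzero $f_d$ gives a nonzero polynomial $f_d(1,v,\dots)$, which does not vanish at generic $v,x_2,\dots$. So the order in $u$ is exactly the minimum of $a_1+b_1$ over the terms of $f$. Finally, $f$ is symmetric or antisymmetric under the diagonal $S_n$-action, so this minimum for the index $1$ equals the minimum over all indices $i$, which is the claimed formula.

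I expect the main obstacle to be the bookkeeping that the Hartogs extension and the local trivialization are compatible with the precise identification in Theorem \ref{thm:secC2HS}. Concretely, this means checking that $\CO(k,m)|_U$ agrees with $\CO(m)|_U$ under a canonical isomorphism, and that the divisor $\tfrac12 B$ on the two Hilbert schemes is the same on $U$. I would handle both points by working on the reduced locus, where everything is explicit in terms of $\Delta_0$.
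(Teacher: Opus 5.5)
The paper does not prove this statement; it is quoted from \cite{CGOT}, so there is no in-paper argument to compare yours with. Judged on its own, your argument is correct. Hartogs on $\Hilb^n(\C^2)$ together with Theorem~\ref{thm:secC2HS} embeds $H^0(\CO(k,m))$ into $A^m$ by restriction to $U=\Hilb^n(\C^2\setminus\mathbf{0})$. Normality of $\Hilb^n(\Bl_{\mathbf{0}}(\C^2))$ then cuts the image down to the condition $\operatorname{ord}_{E_n}\geq k$. Finally, the chart $x_1=u$, $y_1=uv$ at a generic point of $E_n$ identifies $\operatorname{ord}_{E_n}$ with the minimum of $a_1+b_1$ over the terms, hence with $\min_i(a_i+b_i)$ by (anti)symmetry. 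This valuative route also makes transparent that the inequality $a_i+b_i\geq k$ is just the divisorial valuation of $E_n$.

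Two points should be tightened.

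\emph{Irreducibility of $E_n$.} This needs one line. $E_n\setminus B$ is irreducible, while $E_n\cap B$ has dimension $2n-2$, so it cannot be a component of the hypersurface $E_n$. Note also that what you use is normality of the ambient Hilbert scheme, not smoothness of $E_n$.

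\emph{The obstacle you flag.} It does not really live on $U$: there $\CO(0,m)|_U\cong\CO(m)|_U$ is automatic, since $B$ restricts to the same divisor and $\Pic(U)\cong\Z$ is torsion-free. The genuine issue is across $E_n$. Your claim $\operatorname{ord}_{E_n}\Delta_0=0$ presupposes that the rational section of $\CO(0,1)$ extending $\Delta_0$ is represented near $Z_0$ by the function $\Delta_0(u,uv,x_2,\dots)$. In other words, you need that $\CO(-\tfrac12B)$ is not the naive extension of $\CO(1)|_U$ twisted by a multiple of $E_n$. This follows from $Z_0\notin B$:
\begin{itemize}
\item Near $Z_0$ the bundle $\CO(0,2)=\CO(-B)$ is generated by the constant function $1$.
\item Under Theorem~\ref{thm:secC2HS}, the section $\Delta_0^2\in A^2=H^0(\Hilb^n(\C^2),\CO(-B))$ is, up to a scalar, the symmetric polynomial $\Delta_0^2$ viewed as a regular function. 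This function is a unit near $Z_0$, so $\operatorname{div}(\Delta_0^2)=2\operatorname{div}(\Delta_0)$ vanishes there, and $\Delta_0$ is a local generator of $\CO(0,1)$ at $Z_0$.
\end{itemize}
Equivalently, $\det\CO^{[n]}\cong\CO(-\tfrac12 B)$ on the Hilbert scheme of any smooth surface, and at reduced points $\det\CO^{[n]}$ is trivialized by evaluation at the points. With this in place your computation goes through verbatim for all $k,m\in\Z$, including $m<0$.
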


Our main tool in this paper is a combinatorially-indexed $\C$-linear basis of $A^m_{\geq k}$, which we now recall.

\begin{definition}\label{def:P(k,m)}
    For any integers $k,m\geq0$, let $\mathcal{P}(k,m)\subseteq\N^{2n}$ be the set of points $(a_1,b_1,\dots,a_n,b_n)$ such that:
    \begin{enumerate}
        \item $0\leq a_1\leq a_2 \leq \cdots \leq a_n$,
        \item for any $j=1,\dots,n-1$ for which $a_j = a_{j+1}$, we have $b_{j+1}\geq b_j+m$, and
        \item for each $j=1,\dots,n,$ we have $b_j\geq \max\{k-a_j,0\} + \sum_{i=1}^{j-1}\max\{m-(a_j-a_i),0\}.$
    \end{enumerate}
\end{definition}

Note that $(a_1,b_1,\dots,a_n,b_n)\in \mathcal{P}(k,1)$ if and only if  $(a_i,b_i)\in \N^2$ are a collection of distinct points with $a_i+b_i\geq k$ and such that
\[ (a_1,b_1)<\cdots<(a_n,b_n) \]
in lexicographic order. Similarly, $(a_1,b_1,\dots,a_n,b_n)\in\mathcal{P}(k,0)$ if and only if the $(a_i,b_i)\in \N^2$ are a collection of not-necessarily distinct points with $a_i+b_i\geq k$ and such that
\[ (a_1,b_1)\leq\cdots\leq(a_n,b_n) \]
in lexicographic order.

For the following result and throughout this paper, we equip $\C[x_1,y_1,\dots,x_n,y_n]$ with the lexicographic term order where the variables are ordered $x_1>\cdots>x_n>y_1>\cdots>y_n$.

\begin{theorem}\label{thm:HS lead terms}\cite{CGOT}
    For all $k,m\in \Z$, there exists a polynomial in $A^m_{\geq k}$ with trailing term $x_1^{a_1}y_1^{b_1}\cdots x_n^{a_n}y_n^{b_n}$ if and only if $(a_1,b_1,\dots,a_n,b_n)\in \mathcal{P}(k,m)$.
\end{theorem}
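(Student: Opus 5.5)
The statement has two directions. Sufficiency: every point of $\mathcal{P}(k,m)$ is the trailing exponent of some element of $A^m_{\geq k}$. Necessity: every trailing exponent of such an element lies in $\mathcal{P}(k,m)$. Both rest on one observation. In the lexicographic order $x_1>\cdots>x_n>y_1>\cdots>y_n$, the trailing term of a product is the product of the trailing terms. The trailing term of $\Delta_S$ is $\prod_i x_i^{a_i}y_i^{b_i}$, with the points of $S$ placed in increasing lexicographic order. Here I only treat $k,m\geq 0$, where $\mathcal{P}(k,m)$ is defined.

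\emph{Sufficiency.} I would reduce to two extreme cases using the inclusion
\[
A^m_{\geq 0}\cdot A^0_{\geq k}\subseteq A^m_{\geq k},
\]
which holds because all exponents are nonnegative.

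\begin{itemize}
\item The case $m=0$, $k$ arbitrary. For a weakly lex-increasing sequence of points with $a_i+b_i\geq k$, the symmetrized monomial $\sum_{\sigma}\prod_i x_{\sigma(i)}^{a_i}y_{\sigma(i)}^{b_i}$ lies in $A^0_{\geq k}$. Its trailing term is the sorted monomial.
\item The case $k=0$, $m$ arbitrary (Haiman's setting). Here I would show that every point of $\mathcal{P}(0,m)$ is a sum of $m$ points of $\mathcal{P}(0,1)$ plus one point of $\mathcal{P}(0,0)$. The corresponding product of $m$ alternants $\Delta_{S_1}\cdots\Delta_{S_m}$ and one symmetric polynomial then has the required trailing term. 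I would build the decomposition greedily. Take $S_1$ to be the lexicographically smallest admissible configuration compatible with $p$, namely points of the form $(a_j,\beta_j)$ with $\beta_j$ equal to the number of earlier $i$ having $a_i=a_j$. Subtract it, and check that the remainder lies in $\mathcal{P}(0,m-1)$. This should follow from the identity $\max\{m-d,0\}-[d=0]\geq\max\{m-1-d,0\}$ when $d=0$, with the $d>0$ terms handled by shifting the $a$-coordinates as well.
\item The combining step. This is a decomposition lemma: every $p\in\mathcal{P}(k,m)$ can be written as $p=q+r$ with $q\in\mathcal{P}(0,m)$ and $r\in\mathcal{P}(k,0)$. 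My construction is to take $r_j=(0,\max\{k-a_j,0\})$ after the trailing coordinates are matched. I then need to check that $r$ is weakly sorted, and that $q=p-r$ still satisfies conditions (2) and (3) of Definition~\ref{def:P(k,m)} with $k=0$. Condition (3) is immediate, because its $k$-term is exactly additive. The ordering conditions need care when $a_j=a_{j+1}$, since then $r_j=r_{j+1}$.
\end{itemize}

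\emph{Necessity.} Let $f\in A^m_{\geq k}$ have trailing exponent $p$.

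\begin{itemize}
\item Condition (1) and the $k$-part of condition (3) are easy. Antisymmetry (for $m$ odd) or symmetry (for $m$ even) forces the trailing monomial to be lex-sorted, since otherwise a permuted monomial would be smaller. The defining condition of $A^m_{\geq k}$ gives $a_j+b_j\geq k$.
\item The pairwise part comes from vanishing. Every element of $A^m$ lies in $J^m$, where $J$ is the ideal generated by $A$. Hence it lies in $I_{ij}^m$ with $I_{ij}=(x_i-x_j,\,y_i-y_j)$, so $f$ vanishes to order $m$ along each diagonal.
\item To extract the bound on $b_j$, I would proceed inductively in $j$. Take the lowest coefficient of $f$ in $x_1$, then in $x_2$, and so on; these coefficients are still divisible by suitable powers of the diagonal ideals in the remaining variables. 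After fixing $x$-exponents, order-$m$ vanishing along $\{x_i=x_j,y_i=y_j\}$ with an $x$-gap of $a_j-a_i$ forces at least $m-(a_j-a_i)$ additional powers of $y_j$. These contributions must add up over $i<j$ and add to the $k$-term rather than overlap; this additivity is what gives condition (3). When $a_j=a_{j+1}$, the same argument yields condition (2).
\end{itemize}

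The main obstacle I expect is this last additivity claim. Each diagonal constraint individually gives only a $\max$, not a sum. To obtain a sum, I would first try to show that the lowest $x$-coefficient lies in a product of the ideals $(y_i-y_j)^{\,m-(a_j-a_i)}$. This would use the fact that $J^m$ is the intersection of the symbolic powers of the diagonals (Haiman), and then restrict to a generic line to separate the factors.
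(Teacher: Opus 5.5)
The paper does not prove this statement; it is quoted from \cite{CGOT}. I am therefore judging your argument on its own, and it has a genuine gap in the sufficiency direction.

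\textbf{Sufficiency.} The reduction through $A^m_{\geq 0}\cdot A^0_{\geq k}\subseteq A^m_{\geq k}$ cannot work, because the decomposition $\mathcal{P}(k,m)=\mathcal{P}(0,m)+\mathcal{P}(k,0)$ that it relies on is false. Take $n=2$, $k=m=1$ and $p=((0,1),(1,0))\in\mathcal{P}(1,1)$. This $p$ is the trailing exponent of $\Delta_{\{(0,1),(1,0)\}}=x_2y_1-x_1y_2\in A^1_{\geq 1}$. Every point of $\mathcal{P}(1,0)$ has total degree at least $2$, and every point of $\mathcal{P}(0,1)$ has total degree at least $1$. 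So $p$ is not such a sum, and $A^1_{\geq 0}\cdot A^0_{\geq 1}$ lives in total degree $\geq 3$ (compare Lemma \ref{lem:strict inclusions}).

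Your explicit choice $r_j=(0,\max\{k-a_j,0\})$ fails for a reason hidden in the phrase ``the $k$-term is exactly additive''. In $\mathcal{P}(k,0)$ the $k$-condition reads $a_j+b_j\geq k$ in the summand's own coordinates. A summand with all $a$-coordinates zero would therefore need every $b_j\geq k$. Moreover, your $b$'s are weakly decreasing, so $r$ is not even sorted.

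The fix is to distribute $k$ among the $m$ alternant factors instead of loading it onto a symmetric factor. Write $k=qm+r$ and split $p$ into $r$ points of $\mathcal{P}(q+1,1)$ and $m-r$ points of $\mathcal{P}(q,1)$, as in Proposition \ref{prop:decomp}. The corresponding product of determinants lies in $A^m_{\geq k}$ and has trailing exponent $p$. This works because the balanced splittings of $a_i$ and of $k$ make $\max\{k-a_i,0\}$ additive over the summands.

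\textbf{The case $k=0$.} Your peeling step also fails as specified. If $S_1$ keeps the full $a_j$, the remainder has all $a$-coordinates equal to $0$ and would need $b$-gaps of $m-1$. For example, $((0,0),(5,0))\in\mathcal{P}(0,2)$ leaves $((0,0),(0,0))\notin\mathcal{P}(0,1)$. The peeling does work if $S_1$ is given $a$-coordinates $\lceil a_i/m\rceil$. However, verifying this is exactly the count identity \eqref{eqn:number of equalities} of Lemma \ref{lem:div with remainder}, which your sketch does not supply.

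\textbf{Necessity.} Here you have misplaced the difficulty, because additivity is automatic. Suppose the lowest $x$-coefficient $g_a(y)$ is divisible by each $y_j^{\max\{k-a_j,0\}}$ and each $(y_i-y_j)^{\max\{m-(a_j-a_i),0\}}$. These are powers of distinct primes, so their product divides $g_a$. Since the trailing term of $(y_i-y_j)^c$ is $y_j^c$, condition (3) follows, with no appeal to Haiman's theorem.

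The real missing step is the pairwise bound with gap $a_j-a_i$. Once you set $x_i$ aside, the extracted coefficients are no longer in $I_{ij}^m$. Without symmetry the bound is in fact false: $x_1(x_1-x_2)\in I_{12}$ has lowest $x$-coefficient $-1$ at $x$-exponent $(1,1)$. The argument you need is this:
\begin{itemize}
\item Lex-minimality of $a$, together with (anti)symmetry of $f$, forces the coefficient of $x_1^{a_1}\cdots x_{i-1}^{a_{i-1}}$ to be divisible by $(x_i\cdots x_n)^{a_i}$.
\item The quotient stays in $I_{ij}^m$, since that ideal is $I_{ij}$-primary.
\item Expanding the quotient in powers of $x_i-x_j$ before setting $x_i=0$ then yields the exponent $m-(a_j-a_i)$.
\end{itemize}
Condition (2) likewise needs the cofactor of $g_a$ to be invariant under the transposition $(j\;j+1)$.
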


\subsection{Brill-Noether loci}\label{sec:BN}

In this section we introduce the main varieties of interest.

\begin{definition}
    Let $X_k$ denote the closure of the image of the map
    \[\iota_{k}: \Hilb^n(\C^2\setminus \mathbf{0}) \to \Hilb^{n+k(k+1)/2}(\C^2)\]
    defined by $\iota_{k}:I\mapsto (x,y)^k\cap I$, and $X_{k,k+1}$ the closure of the image of the product map
    \[ \iota_k\times \iota_{k+1}: \Hilb^n(\C^2\setminus \mathbf{0}) \to \Hilb^{n+k(k+1)/2}(\C^2)\times \Hilb^{n+(k+1)(k+2)/2}(\C^2). \]
\end{definition}

By construction, the spaces $X_k$ and $X_{k,k+1}$ are irreducible varieties of dimension $2n$. We have natural projections
\begin{equation}
\label{eq: def p q maps}
\begin{tikzcd}
 & X_{k,k+1} \arrow[swap]{dl}{p_k} \arrow{dr}{q_{k+1}} & \\
 X_k & & X_{k+1}.
\end{tikzcd}
\end{equation}
It turns out that $X_k$ and $X_{k,k+1}$ are closely related to the Brill-Noether loci.

\begin{definition}
Define the Brill-Noether loci
$$
\BN^{r}(n)=\{I\subset \C[x,y]:\dim I/(x,y)I\ge r+1\}\subset \Hilb^n(\C^2),
$$
and
$$
\mathcal{M}(n,r):=\left\{(I,J):\C[x,y]\supset I\supset J,\ xI\subset J,\ yI\subset J\right\}\subset \Hilb^{n,n+r+1}(\C^2).
$$
\end{definition}

It is easy to see that $\BN^r(n)$ is closed in $\Hilb^n(\C^2)$ and that the image of the natural projection 
$$
p^+_{n,r}:\mathcal{M}(n,r)\to \Hilb^n(\C^2)
$$
is contained in $\BN^r(n)$. Indeed, for $(I,J)\in \mathcal{M}(n,r)$ we have $I\supset J\supset (x,y)I$ and $\dim I/(x,y)I\ge \dim I/J=r+1$. We can also consider the second projection
$$
p^-_{n,r}:\mathcal{M}(n,r)\to \Hilb^{n+r+1}(\C^2).
$$
For $(I,J)\in \mathcal{M}(n,r)$ the $\C[x,y]$-module $\C[x,y]/J$ contains an $(r+1)$-dimensional trivial submodule $I/J$, and one can check   \cite[Lemma 3.2]{Bayer} that this implies $\dim J/(x,y)J\ge r+2$. Therefore the image of $p^-_{n,r}$ is contained in $\BN^{r+1}(n+r+1)$, and we get the diagram
\begin{equation}
\label{eq: def BN maps}
\begin{tikzcd}
 & \mathcal{M}(n,r) \arrow[swap]{dl}{p_{n,r}^{+}} \arrow{dr}{p_{n,r}^-} & \\
 \BN^r(n) & & \BN^{r+1}(n+r+1).
\end{tikzcd}
\end{equation}
By the main result of \cite{Bayer}, the locus
$\BN^r(n)$ is irreducible and 
$$
\dim \BN^r(n)=2n-r(r+1).
$$
Furthermore, by \cite[Proposition 3.23]{NY2} (see also \cite{GGS}) the space $\mathcal{M}(n,r)$ is smooth  and the maps $p^+_{n,r}$ and $p^-_{n,r}$ are resolutions of singularities of $\BN^r(n)$ and $\BN^{r+1}(n+r+1)$ respectively.

\begin{proposition}
\label{prop: X as BN}
We have isomorphisms $$X_k=\BN^k(n+k(k+1)/2),\ X_{k,k+1}=\mathcal{M}(n+k(k+1)/2,k)$$
and the diagrams \eqref{eq: def p q maps} and \eqref{eq: def BN maps} agree. In particular, $X_{k,k+1}$ is smooth and the projections $p_k:X_{k,k+1}\to X_k$ and $q_k:X_{k,k+1}\to X_{k+1}$ are proper and birational.
\end{proposition}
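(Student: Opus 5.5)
The plan is to compare, on the dense open sets, the images of $\iota_k$ and $\iota_k\times\iota_{k+1}$ with the Brill--Noether loci, and then use irreducibility and dimension counts to identify the closures. Set $N=n+k(k+1)/2$.

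\textbf{Step 1: $\iota_k$ lands in $\BN^k(N)$.} For $I\in\Hilb^n(\C^2\setminus\mathbf{0})$, the ideal $J=(x,y)^k\cap I$ splits as the product of $(x,y)^k$ at the origin and $I$ away from the origin. Hence
\[
J/(x,y)J\simeq (x,y)^k/(x,y)^{k+1}\oplus I/(x,y)I .
\]
The second summand vanishes, because $I$ is comaximal with $(x,y)$. The first has dimension $k+1$. So $\dim J/(x,y)J=k+1$ and $\iota_k(I)\in\BN^k(N)$. Since $\BN^k(N)$ is closed, $X_k\subseteq \BN^k(N)$.

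\textbf{Step 2: equality $X_k=\BN^k(N)$.} By \cite{Bayer}, $\BN^k(N)$ is irreducible of dimension
\[
2N-k(k+1)=2n=\dim X_k .
\]
Both are irreducible closed subvarieties of $\Hilb^N(\C^2)$ of the same dimension with $X_k\subseteq\BN^k(N)$, so they are equal as sets. If scheme structures matter, I would give both the reduced structure. Alternatively, the isomorphism can be read as the statement that the normalization/resolution picture agrees, since $\mathcal{M}$ is smooth and $p^\pm$ are birational.

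\textbf{Step 3: $X_{k,k+1}=\mathcal{M}(N,k)$.} Take $I$ as above with $J_k=\iota_k(I)$ and $J_{k+1}=\iota_{k+1}(I)$. Then $J_k\supset J_{k+1}$ and
\[
J_k/J_{k+1}\simeq (x,y)^k/(x,y)^{k+1},
\]
which has dimension $k+1$ and is killed by $x$ and $y$. Also
\[
N+k+1=n+(k+1)(k+2)/2 .
\]
So $(\iota_k\times\iota_{k+1})(I)\in\mathcal{M}(N,k)$. The condition $xJ\subset J'$, $yJ\subset J'$ is closed, so $X_{k,k+1}\subseteq\mathcal{M}(N,k)$. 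The space $\mathcal{M}(N,k)$ is smooth by \cite{NY2}, and it is connected since it is birational onto the irreducible $\BN^k(N)$ via a resolution; so it is irreducible. Its dimension is $\dim\BN^k(N)=2n$. Therefore $X_{k,k+1}=\mathcal{M}(N,k)$.

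\textbf{Step 4: the diagrams agree.} The maps $p_k,q_{k+1}$ and $p^+_{N,k},p^-_{N,k}$ are both the restrictions of the two coordinate projections of $\Hilb^N\times\Hilb^{N+k+1}$, so they coincide. Properness and birationality then follow from the resolution statement of \cite{NY2}. One can also see birationality directly: over the open image of $\iota_k$, the fiber is a single point, determined as $J\cap(x,y)^{k+1}$.

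The main obstacle is Step 3, specifically irreducibility of $\mathcal{M}(N,k)$. The plan is to deduce it from smoothness together with the properness and birationality of $p^+$ onto an irreducible base. This requires that no extra component of $\mathcal{M}$ maps onto a proper closed subset. That is guaranteed if $p^+$ is a resolution in the sense of \cite{NY2}, which includes $\mathcal{M}$ being irreducible, so I would cite that carefully.
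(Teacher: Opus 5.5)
Your proposal is correct and is essentially the paper's argument: you show the image of $\iota_k$ (resp.\ $\iota_k\times\iota_{k+1}$) lies in the closed locus $\BN^k(N)$ (resp.\ $\mathcal{M}(N,k)$), then conclude equality because both are irreducible of dimension $2n$, importing irreducibility, smoothness, properness and birationality from \cite{Bayer} and \cite{NY2} just as the paper does. There is one small slip in Step 1, which does not affect the conclusion: $I/(x,y)I$ is one-dimensional, not zero (it is spanned by the class of any element of $I$ not vanishing at the origin), so your direct sum should read $(x,y)^k\oplus I\simeq J\oplus\C[x,y]$; more simply, $J/(x,y)J$ is killed by $(x,y)$, hence equals its localization at the origin, namely $(x,y)^k/(x,y)^{k+1}$, and so $\dim J/(x,y)J=k+1$ (of which only $\geq k+1$ is needed) still holds.
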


\begin{proof}
It is easy to see that the image of $\iota_k$ is contained in $\BN^k(n+k(k+1)/2)$, hence $X_k\subset \BN^k(n+k(k+1)/2)$. Since both $X_k$ and $\BN^k(n+k(k+1)/2)$ are irreducible varieties of dimension $2n$, we get $X_k=\BN^k(n+k(k+1)/2)$. 

Similarly, the image of $\iota_{k,k+1}$ is contained in $\mathcal{M}(n+k(k+1)/2,k)$ and   $X_{k,k+1}\subset \mathcal{M}(n+k(k+1)/2,k)$. Since both $X_{k,k+1}$ and $\mathcal{M}(n+k(k+1)/2,k)$ are irreducible varieties of dimension $2n$, we get $X_{k,k+1}=\mathcal{M}(n+k(k+1)/2,k)$.
\end{proof}



We can combine the diagrams \eqref{eq: def p q maps} into a longer chain:

\begin{equation}\label{eq:mapdiagram}
    \begin{tikzcd}
    &X_{k-1,k} \arrow[dl,"p_{k-1}"'] \arrow[dr,"q_k"] && X_{k,k+1} \arrow[dl,"p_k"'] \arrow[dr,"q_{k+1}"] && X_{k+1,k+2} \arrow[dl,"p_{k+1}"']\arrow[dr,"q_{k+2}"] \\
    \cdots &&X_k && X_{k+1} && \cdots
\end{tikzcd}
\end{equation} 

\begin{example}
    For small $k$, these spaces have simple explicit descriptions. Indeed, we have $X_0 = \Hilb^n(\C^2)$, $X_{0,1} = \Hilb^{n,n+1}_0(\C^2)$ is the locus of pairs of ideals $(I,J)$ such that  $I/J$ is supported at the origin, and $X_1 \subset\Hilb^{n+1}(\C^2)$   is the locus of ideals $I\in \Hilb^{n+1}(\C^2)$ contained in $(x,y)$. Thus, the leftmost side of \eqref{eq:mapdiagram} is as follows:
    \[
        \begin{tikzcd}
        & \Hilb^{n,n+1}_0(\C^2) \arrow[dl,"p_0"'] \arrow[dr,"q_{1}"] && \cdots \arrow[dl,"p_{1}"'] \\
        \Hilb^n(\C^2) && X_1 
    \end{tikzcd}
    \]
\end{example}

\begin{remark}
The inductive argument in the proof of \cite[Theorem 3.3]{Bayer} shows that
all nonempty Brill-Noether loci $\BN^r(n)$ and their resolutions $\mathcal{M}(n,r)$ are birational to $\Hilb^{n'}(\C^2)$ for  $n'=n-r(r+1)/2$.
Proposition \ref{prop: X as BN} strengthens this observation and allows us to identify them with the explicit birational models of $\Hilb^{n'}(\C^2)$:
$$
\BN^r(n)=X_{r}(n'),\ \mathcal{M}(n,r)=X_{r,r+1}(n').
$$

\end{remark}

\section{Algebraic properties of the section ring}\label{sec:finitegenerationfacts}

In this section we establish several algebraic properties of the section ring 
\[ R(n)=\bigoplus_{k,m\geq 0} H^0\left(\Hilb^n\left(\Bl_{\mathbf{0}}\left(\C^2\right)\right),\CO(k,m)\right) \simeq \bigoplus_{k,m\geq 0} A^{m}_{\geq k}(n), \]
 including Theorem \ref{thm:finitegenerationfacts}.

\subsection{Division with remainder}\label{sec:division}

We will need several combinatorial facts about the sets $\mathcal{P}(k,m)$ from which we will derive algebraic properties about $R(n)$.
Throughout this section we will abbreviate
$$
(\aaa,\bbb)=\big( (a_1,b_1),\dots,(a_n,b_n) \big)\in \N^{2n}.
$$
We assume these points are ordered lexicographically, as with the $n$-element subset $S$ in Definition \ref{def: delta S}. Our main goal is the following proposition.

\begin{proposition}\label{prop:decomp}
    Let $k\geq 0$ and $m>0$, and write $k= qm+r$ with $0\leq r <m$. For any $(\aaa,\bbb) = \big( (a_1,b_1),\dots,(a_n,b_n) \big)\in \mathcal{P}(k,m)$, there is a decomposition $$(\aaa,\bbb) = \left(\aaa^{(0)},\bbb^{(0)}\right)+\cdots+\left(\aaa^{(m-1)},\bbb^{(m-1)}\right)$$ where $\left(\aaa^{(s)},\bbb^{(s)}\right)\in \mathcal{P}(q+1,1)$ for $s=0,\dots,r-1$ and $\left(\aaa^{(s)},\bbb^{(s)}\right)\in \mathcal{P}(q,1)$ for $s=r,\dots,m-1$.
\end{proposition}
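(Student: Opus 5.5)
The plan is to split the $x$-exponents as evenly as possible among the $m$ pieces, and then build the $y$-exponents inductively in $j$. The idea is that condition (3) of Definition \ref{def:P(k,m)} is exactly the sum of the constraints the pieces impose.

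\textbf{Step 1: the $x$-exponents.} For $s=0,\dots,m-1$ set
\[ a^{(s)}_j=\left\lfloor \frac{a_j+m-1-s}{m}\right\rfloor . \]
Then $\sum_s a^{(s)}_j=a_j$, and each sequence $a^{(s)}_1\le\cdots\le a^{(s)}_n$ is nondecreasing. The pieces $s<r$ receive the ceiling shares, and they are also the pieces with the larger threshold $k_s:=q+1$; for $s\ge r$ we have $k_s:=q$. Note that $\sum_s k_s=k$.

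\textbf{Step 2: two counting facts.}

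(i) For $i<j$, the number of $s$ with $a^{(s)}_i=a^{(s)}_j$ is exactly $\max\{m-(a_j-a_i),0\}$. This holds because the floors of $(a_i+c)/m$ and $(a_j+c)/m$ agree for exactly that many residues $c$.

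(ii) $\sum_s\max\{k_s-a^{(s)}_j,0\}=\max\{k-a_j,0\}$. Both $(k_s)$ and $(a^{(s)}_j)$ are balanced vectors sorted the same way, so the differences $k_s-a^{(s)}_j$ take values in $\{d,d+1\}$ or $\{d-1,d\}$. Hence they never have mixed signs, and the positive parts sum to the positive part of the sum.

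\textbf{Step 3: the $y$-exponents, by induction on $j$.} Suppose $b^{(s)}_i$ has been chosen for all $i<j$, with each partial piece in $\mathcal{P}(k_s,1)$, i.e. lex increasing with $a+b\ge k_s$. For piece $s$, the minimal admissible value is
\[ L^{(s)}_j=\max\{k_s-a^{(s)}_j,0\}, \]
replaced by $\max\{L^{(s)}_j,\,b^{(s)}_{j-1}+1\}$ when $a^{(s)}_{j-1}=a^{(s)}_j$. We set $b^{(s)}_j=L^{(s)}_j$ for all $s$ except one designated piece, which absorbs the excess $b_j-\sum_s L^{(s)}_j$. The only thing to check is that this excess is nonnegative.

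\textbf{Main obstacle: proving $\sum_s L^{(s)}_j\le b_j$.} The danger is that an earlier excess placed in some piece propagates through the chains $b^{(s)}_{j-1}+1$. I would prove the stronger invariant
\[ \sum_s b^{(s)}_j\le b_j \quad\text{with}\quad b^{(s)}_j\text{ as small as possible given the chains,} \]
by always putting the excess into a piece that is \emph{about to break} its run of equal $x$-exponents. Concretely, this is the piece $s$ with $a^{(s)}_{j+1}>a^{(s)}_j$; when $a_{j+1}>a_j$ such a piece exists, namely the one whose share increments. If $a_{j+1}=a_j$, then no piece breaks, but condition (2) of Definition \ref{def:P(k,m)} gives slack $b_{j+1}-b_j\ge m$. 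That slack is exactly the $+1$ needed in each of the $m$ chains, so the excess can be carried forward in the same piece harmlessly.

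With this bookkeeping, each $b^{(s)}_j$ is either $\max\{k_s-a^{(s)}_j,0\}$, or one more than the value at the start of its current equal-$a$ run plus its length. Summing over $s$ and using (i) and (ii), one bounds $\sum_s L^{(s)}_j$ by
\[ \max\{k-a_j,0\}+\sum_{i<j}\max\{m-(a_j-a_i),0\}, \]
which is at most $b_j$ by condition (3) of Definition \ref{def:P(k,m)}. I expect making this telescoping comparison precise to be the delicate part of the proof. If the single-designated-piece rule fails, I would instead spread the excess so as to keep every $b^{(s)}_j$ equal to its lower bound plus a common amount.
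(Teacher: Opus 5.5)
Your proposal is correct and follows essentially the paper's route. It uses the same balanced split of the $x$-exponents (your floor formula is exactly the paper's $a^{(s)}_i=q_i+1$ for $s<r_i$ and $q_i$ otherwise), the same two counting identities (i) and (ii), and it places the excess in a piece whose share increments, which the paper pins down as $s=r_i$.

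The paper avoids your propagation bookkeeping by giving a closed form instead: $b^{(s)}_i=\max\{k^{(s)}-a^{(s)}_i,0\}+\tilde b^{(s)}_i+[s=r_i]\,e_i$, where $e_i\geq 0$ is the slack in condition (3). It then checks lexicographic order directly: $a^{(r_i)}_i=a^{(r_i)}_{i+1}$ forces $a_i=a_{i+1}$, and in that case $e_{i+1}-e_i=b_{i+1}-b_i-m\geq 0$.
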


Before the proof, we first state and derive the main consequence of this proposition we will use in later sections.

\begin{corollary}
    \label{cor:Minkowski decomp}
    For any integer $q\geq 0$ and integers $m_1,m_2\geq 0$ not both $0$, we have
    \[ \mathcal{P}(qm_1+(q+1)m_2,m_1+m_2) = \underbrace{\mathcal{P}(q,1)+\cdots+\mathcal{P}(q,1)}_{m_1 \text{ terms}}+\underbrace{\mathcal{P}(q+1,1)+\cdots \mathcal{P}(q+1,1)}_{m_2 \text{ terms}} \]
\end{corollary}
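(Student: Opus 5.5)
We prove the two inclusions separately. The inclusion $\supseteq$ should follow from Theorem \ref{thm:HS lead terms} by multiplicativity of trailing terms; the inclusion $\subseteq$ should be a direct specialization of Proposition \ref{prop:decomp}.

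\emph{Inclusion $\supseteq$.} Set $k=qm_1+(q+1)m_2$ and $m=m_1+m_2$. Take points $p_1,\dots,p_{m_1}\in\mathcal{P}(q,1)$ and $p'_1,\dots,p'_{m_2}\in\mathcal{P}(q+1,1)$.

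1. By Theorem \ref{thm:HS lead terms}, there are polynomials $f_i\in A^1_{\geq q}$ and $g_j\in A^1_{\geq q+1}$ whose trailing terms have exponents $p_i$ and $p'_j$.

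2. The product $F=\prod_i f_i\prod_j g_j$ lies in $A^{m}$, since it is a product of $m$ antisymmetric polynomials.

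3. Every monomial of $F$ is a product of one monomial from each factor, so its $i$-th exponent pair satisfies $a_i+b_i\geq qm_1+(q+1)m_2=k$. Hence $F\in A^m_{\geq k}$.

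4. Trailing terms are multiplicative for a monomial order, so the trailing exponent of $F$ is $\sum_i p_i+\sum_j p'_j$.

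5. Applying Theorem \ref{thm:HS lead terms} in the other direction gives $\sum_i p_i+\sum_j p'_j\in\mathcal{P}(k,m)$.

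There is one subtlety in step 4. The trailing exponent of each factor is lex-sorted as a sequence of pairs, and sorted sequences added coordinatewise give a sequence whose first coordinates are still nondecreasing. The statement of Theorem \ref{thm:HS lead terms} already asserts that trailing exponents land in $\mathcal{P}$, so no re-sorting issue arises.

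\emph{Inclusion $\subseteq$.} Again set $k=qm_1+(q+1)m_2$ and $m=m_1+m_2$, and first assume $m>0$.

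1. If $m_2=0$, then $k=qm$. Proposition \ref{prop:decomp} applies with quotient $q$ and remainder $r=0$, and writes any point of $\mathcal{P}(k,m)$ as a sum of $m=m_1$ points of $\mathcal{P}(q,1)$.

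2. If $m_2>0$ and $m_1>0$, then $k=qm+m_2$ with $0<m_2<m$. So the division of $k$ by $m$ has quotient $q$ and remainder $r=m_2$. Proposition \ref{prop:decomp} then gives $r=m_2$ summands in $\mathcal{P}(q+1,1)$ and $m-r=m_1$ summands in $\mathcal{P}(q,1)$, which is exactly the required shape.

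3. If $m_1=0$, then $k=(q+1)m$. Division gives quotient $q+1$ and remainder $0$, so Proposition \ref{prop:decomp} gives $m=m_2$ summands in $\mathcal{P}(q+1,1)$, again as required.

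Since Minkowski addition is commutative, the order of the summands does not matter.

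\emph{Expected obstacles.} The only real bookkeeping is matching the division-with-remainder data $(q,r)$ in each of these three cases. The one step to double-check is the multiplicativity of trailing terms together with the lexicographic-ordering convention used to define the exponent vector in Theorem \ref{thm:HS lead terms}. If that theorem is read as giving the trailing monomial itself, then sums of exponents are automatically trailing exponents and the $\supseteq$ argument goes through unchanged.
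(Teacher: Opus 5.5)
Your proof is correct. The inclusion $\subseteq$ is the paper's argument. The paper merges your first two cases into one, taking $r=m_2$ whenever $m_1>0$. Also, since $m_1,m_2$ are not both zero, $m>0$ is automatic, so nothing remains after your case analysis.

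For $\supseteq$ you take a different route. The paper notes that $\mathcal{P}(k,m)+\mathcal{P}(k',m')\subseteq\mathcal{P}(k+k',m+m')$ follows directly from Definition~\ref{def:P(k,m)}:
\begin{enumerate}
\item condition (1) is clearly additive;
\item an equality $a_j=a_{j+1}$ in a sum of nondecreasing sequences forces the same equality in every summand, so the gaps in condition (2) add;
\item the lower bounds in condition (3) add because $x\mapsto\max\{x,0\}$ is subadditive.
\end{enumerate}
You instead use Theorem~\ref{thm:HS lead terms} in both directions. You realize each summand as a trailing exponent, multiply the polynomials, and read off the trailing exponent of the product.

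This is valid. Trailing terms are multiplicative for any term order. Exponent vectors are indexed by the variables $x_i,y_i$, not by sorted points, so the trailing exponent of the product is literally the coordinatewise sum. The re-sorting worry in your step 4 can therefore be dropped.

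Your route is really the algebraic inclusion $(A^1_{\geq q})^{m_1}(A^1_{\geq q+1})^{m_2}\subseteq A^{m}_{\geq k}$ seen through trailing terms, and it needs no inequality bookkeeping. The cost is that it relies on the nontrivial classification of trailing terms from \cite{CGOT} for a fact that is elementary. The paper's direct check is self-contained and applies verbatim to the real polyhedra $\Delta(k,m)$ of Section~\ref{sec:NObodies}.
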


\begin{proof}
    One easily shows that for any $k,k',m,m'$ there is an inclusion
    \[ \mathcal{P}(k+k',m+m') \supseteq \mathcal{P}(k,m)+\mathcal{P}(k',m'). \]
    Repeated applications of this give the first desired inclusion. The other inclusion is precisely Proposition \ref{prop:decomp}. Indeed, for $m_1>0$ we can apply Proposition \ref{prop:decomp} with $k = qm_1+(q+1)m_2$, $m = m_1+m_2$, and $r=m_2$, while for $m_1=0$ we choose $k=(q+1)m_2$, $m=m_2$ and $r=0$.
\end{proof}

Now we will work towards the proof of Proposition \ref{prop:decomp}. First, we give a refined version of the decomposition of $\mathcal{P}(0,m)$, i.e. the case $k=0$. The decomposition is easiest to describe for points $(\aaa,\bbb)$ for which all the coordinates $b_i$ are minimal, i.e. when 
\[ b_i = \sum_{j=1}^{i-1}\max\{m-(a_i-a_j),0\} \]
for all $i$. The following lemma gives the desired decomposition depending only on the $a$-coordinates for such points. Later we will modify the $b$-coordinates   to give the decomposition for an arbitrary point $(\aaa,\bbb)\in \mathcal{P}(0,m).$

\begin{lemma}\label{lem:div with remainder}
    Fix integers $0\leq a_1\leq \cdots \leq a_n$ and $m> 0$, and write $a_i = mq_i+r_i$ where $0\leq r_i<m$. The quantities
    \[ a_i^{(s)}=\begin{cases}
    q_i+1 & s<r_i,\\
    q_i & s\ge r_i.
    \end{cases},
    \hspace{1cm} \text{and} \hspace{1cm}
    \tilde{b}^{(s)}_i = \big|\left\{ j=1,\dots,i-1 \, : \, a^{(s)}_j = a^{(s)}_i\right\}\big|, \]
    defined for all $i=1,\dots,n$ and $s=0,\dots,m-1$, satisfy $a^{(0)}_i+\cdots+a^{(m-1)}_i = a_i$ and 
    \[ \tilde{b}^{(0)}_i+\cdots+\tilde{b}^{(m-1)}_i = \sum_{j=1}^{i-1}\max\{m-(a_i-a_j),0\} \]
    for all $i=1,\dots,n$.
\end{lemma}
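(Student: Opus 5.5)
The plan is to prove both identities by counting, after rewriting $a^{(s)}_i$ as a single floor function.

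\emph{Closed form.} First I will check that for every $a=mq+r$ with $0\le r<m$ and every $s\in\{0,\dots,m-1\}$,
\[ a^{(s)} = \left\lfloor \frac{a+m-1-s}{m}\right\rfloor. \]
Write $a+m-1-s=mq+(r+m-1-s)$.
\begin{itemize}
\item If $s<r$, then $m\le r+m-1-s\le 2m-2$, so the floor equals $q+1$.
\item If $s\ge r$, then $0\le r+m-1-s\le m-1$, so the floor equals $q$.
\end{itemize}
This matches the definition of $a^{(s)}$. Setting $t=m-1-s$, as $s$ runs over $0,\dots,m-1$ so does $t$. Hence the multiset $\{a^{(s)}\}_s$ equals $\{\lfloor (a+t)/m\rfloor\}_{t=0}^{m-1}$, with the same index correspondence for every $a$.

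\emph{First identity.} The value $q_i+1$ occurs for exactly the $r_i$ indices $s<r_i$, and $q_i$ occurs for the remaining $m-r_i$ indices. So the sum is $r_i(q_i+1)+(m-r_i)q_i=mq_i+r_i=a_i$. This is immediate.

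\emph{Second identity.} Swap the order of summation:
\[ \sum_{s=0}^{m-1}\tilde b^{(s)}_i=\sum_{j=1}^{i-1}\bigl|\{s: a^{(s)}_j=a^{(s)}_i\}\bigr|. \]
It then suffices to show that for $j<i$ (so $a_j\le a_i$, with $d=a_i-a_j\ge 0$), the number of $t\in\{0,\dots,m-1\}$ with $\lfloor (a_j+t)/m\rfloor=\lfloor (a_j+d+t)/m\rfloor$ is $\max\{m-d,0\}$.

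The two floors differ exactly when the half-open interval $(a_j+t,\,a_j+t+d]$ contains a multiple of $m$.
\begin{itemize}
\item If $d\ge m$, this interval always contains a multiple of $m$, so the count is $0$.
\item If $d<m$, the interval contains at most one multiple of $m$. As $t$ runs over $m$ consecutive integers, the left endpoint $a_j+t$ runs over a complete residue system mod $m$. A multiple of $m$ lies in the interval precisely when $a_j+t\equiv -\ell \pmod m$ for some $\ell\in\{1,\dots,d\}$, which happens for exactly $d$ values of $t$. Hence equality holds for exactly $m-d$ values.
\end{itemize}
Summing over $j<i$ gives the claim.

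The only step needing real care is this residue count. It requires the closed form to hold with the same $t\leftrightarrow s$ correspondence for both $a_i$ and $a_j$, which the floor formula guarantees since it depends on $s$ only through $t=m-1-s$. The rest is bookkeeping.
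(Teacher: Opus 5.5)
Your proof is correct, and it shares the paper's overall structure. The first identity is the same direct count $r_i(q_i+1)+(m-r_i)q_i=a_i$. For the second, both you and the paper swap the order of summation and reduce to showing that, for each $j<i$, exactly $\max\{m-(a_i-a_j),0\}$ indices $s$ satisfy $a^{(s)}_j=a^{(s)}_i$.

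The difference is in how this per-pair count is verified.

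\begin{itemize}
\item The paper works directly with quotients and remainders and splits into three cases: $q_i=q_j$ with $r_i\ge r_j$; $q_i=q_j+1$ with $r_i<r_j$; and all remaining cases, where $a_i-a_j\ge m$. In each case it lists the coinciding indices explicitly ($s<r_j$ or $s\ge r_i$ in the first case, $r_i\le s<r_j$ in the second).
\item You instead note that $a^{(s)}=\lfloor (a+m-1-s)/m\rfloor$. So the splitting is Hermite's identity $a=\sum_{t=0}^{m-1}\lfloor (a+t)/m\rfloor$, with the relabeling $t=m-1-s$ independent of $a$. You then count the multiples of $m$ in $(a_j+t,\,a_j+t+d]$ as $a_j+t$ runs over a full residue system.
\end{itemize}

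Your route is case-free and makes it manifest that the count depends only on $d=a_i-a_j$. The paper's case analysis is more pedestrian, but it keeps an explicit description of which $s$ coincide, in the same $(q_i,r_i)$ language used in the proof of Lemma \ref{lem:k=0 decomp}.

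One small point: your residue step tacitly uses that $-1,\dots,-d$ are distinct modulo $m$. This holds because $d<m$, and it is worth saying explicitly.
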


\begin{proof}
    This is essentially \cite[Lemma 6.20]{GKO}, but we repeat the proof here for the reader's convenience. For each $i=1,\dots,n$ we have
    $$
    a^{(0)}_i+\ldots+a^{(m-1)}_i=r_i(q_i+1)+(m-r_i)q_i=mq_i+r_i=a_i,
    $$
    so $a^{(0)}_i+\ldots+a^{(m-1)}_i = a_i$. For the second claim, observe that the sum $\tilde{b}^{(0)}_i+\cdots+\tilde{b}^{(m-1)}_i$ is equal to the number of pairs $(j,s)\in \{1,\dots,i-1\}\times\{0,\dots,m-1\}$ such that $a_j^{(s)}=a_i^{(s)}$. It therefore suffices then to show that for each  $j=1,\dots,i-1$, 
    \begin{equation}\label{eqn:number of equalities}
        \big|\left\{s = 0,\dots,m-1 \, : \, a^{(s)}_j = a^{(s)}_i \right\} \big| = \max\{m-(a_i-a_j),0\}. 
    \end{equation}
    We have the following cases:
    
    1) $q_i=q_j,r_i\ge r_j$. In this case $a_i^{(s)}=a_{j}^{(s)}=q_i+1$ for $s=0,\ldots,r_j-1$ and $a_i^{(s)}=a_{j}^{(s)}=q_i$  for $s=r_i,\ldots,m-1$, so the left hand side of \eqref{eqn:number of equalities} equals
    $$
    r_j+(m-r_i)=m-(r_i-r_j)=m-(a_i-a_j).
    $$

    2) $q_i=q_j+1,r_i<r_j$. In this case 
    $a_i^{(s)}=a_{j}^{(s)}=q_i=q_j+1$ for $s=r_i,\ldots,r_j-1$ and the left hand side of \eqref{eqn:number of equalities} equals
    $$
    r_j-r_i=m-(m(q_j+1)+r_i)-(mq_j+r_j))=m-(a_i-a_j).
    $$
    
    3) In all other cases we have $a_i-a_j\ge m$ and there is no overlap between $a_i^{(s)}$ and $a_j^{(s)}$. Indeed, for $q_i=q_j+1$ and $r_i\ge r_j$ we get $a_i^{(s)}=q_j+2$ for $s<r_j$ and $a_i^{(s)}\ge q_j+1$ for $s\ge r_j$. 
    For $q_i\ge q_j+2$ we have $a_i^{(s)}\ge q_j+2$ while $a_j^{(s)}\le q_j+1$. This completes the proof of the lemma.
\end{proof}

Now we give the desired decomposition for an arbitrary point in $\mathcal{P}(0,m)$ by modifying the $\widetilde{b}_i^{(s)}$-coordinates.

\begin{lemma}\label{lem:k=0 decomp}
    For any $(\aaa,\bbb) = \big( (a_1,b_1),\dots,(a_n,b_n) \big)\in \mathcal{P}(0,m)$, there is a decomposition $(\aaa,\bbb) = \left(\aaa^{(0)},\bbb^{(0)}\right)+\cdots+\left(\aaa^{(m-1)},\bbb^{(m-1)}\right)$ where $\left(\aaa^{(s)},\bbb^{(s)}\right)\in \mathcal{P}(0,1)$ and $a_i^{(s)}$ are as in Lemma \ref{lem:div with remainder}.
\end{lemma}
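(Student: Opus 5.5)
The plan is to start from the minimal decomposition of Lemma \ref{lem:div with remainder} and distribute the excess in the $b$-coordinates among the $m$ summands. Keep the $a^{(s)}_i$ exactly as in that lemma. For $(\aaa,\bbb)\in\mathcal{P}(0,m)$, set
\[ e_i = b_i-\sum_{j<i}\max\{m-(a_i-a_j),0\}, \]
which satisfies $e_i\geq 0$ by condition (3) with $k=0$. We seek $b^{(s)}_i=\tilde b^{(s)}_i+e^{(s)}_i$ with $e^{(s)}_i\geq 0$ and $\sum_s e^{(s)}_i=e_i$. Lemma \ref{lem:div with remainder} then gives $\sum_s(\aaa^{(s)},\bbb^{(s)})=(\aaa,\bbb)$ automatically.

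First I would record the facts that make $(\aaa^{(s)},\tilde\bbb^{(s)})$ itself lie in $\mathcal{P}(0,1)$. We have $a^{(s)}_i=\lfloor (a_i+m-1-s)/m\rfloor$, so $a^{(s)}_i$ is nondecreasing in $i$ for each $s$. Within each maximal block of equal values of $a^{(s)}$, $\tilde b^{(s)}$ runs through $0,1,2,\dots$. Hence every point of $\mathcal{P}(0,1)$ with these $a$-coordinates is obtained by adding a vector $e^{(s)}$ that is nondecreasing along each block of equal $a^{(s)}$. So the problem reduces to a purely combinatorial one: split each $e_i$ as $\sum_s e^{(s)}_i$ so that, for every $s$ and every $i$ with $a^{(s)}_{i-1}=a^{(s)}_i$, we have $e^{(s)}_i\geq e^{(s)}_{i-1}$.

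Two observations drive the construction. When $a_{i-1}=a_i$, the minimal values of $b$ differ by exactly $m$, so condition (2) gives $e_i\geq e_{i-1}$. When $a_{i-1}<a_i$, the set $S_i=\{s: a^{(s)}_{i-1}=a^{(s)}_i\}$ has size $\max\{m-(a_i-a_{i-1}),0\}<m$, and $S_i$ is a proper interval of components. Concretely, $S_i=\{r_i,\dots,r_{i-1}-1\}$ when $q_i=q_{i-1}+1$, and it is the complement of an interval when $q_i=q_{i-1}$, as in the case analysis of Lemma \ref{lem:div with remainder}. The natural choice is therefore to place all of $e_i$ into a single component $\sigma(i)$, chosen as follows:
\begin{itemize}
\item if $a_i=a_{i-1}$, take $\sigma(i)=\sigma(i-1)$, so monotonicity holds because $e_i\geq e_{i-1}$;
\item if $a_i>a_{i-1}$, take $\sigma(i)$ to be a component not in $S_i$, so that block $i$ starts fresh there.
\end{itemize}
One must then check the monotonicity constraint in every other component $s\in S_i$. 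Since $e^{(s)}_i=0$ there, it is satisfied only if $e^{(s)}_{i-1}=0$ as well. The choice of $\sigma$ must therefore guarantee that the component carrying the excess of a block of equal $a$'s is not in $S_i$ at the next jump. It should also not be in $S$ at later indices that remain in the same $a^{(s)}$-block.

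This compatibility is the main obstacle. A single block's carrier component $\sigma$ must avoid $S_{i'}$ for every later $i'$ that stays in the same $a^{(\sigma)}$-block. If that fails, I would allow the excess $e_{i'}$ to be split, giving component $\sigma$ at least $e^{(\sigma)}_{i'-1}$. Feasibility would then require an inequality of the form
\[ e_{i'}\ \geq\ \sum_{s\in S_{i'}} e^{(s)}_{i'-1}. \]
I would try to prove this inequality by induction on $i'$, using condition (3) for the point $i'$: it bounds $b_{i'}$ below by contributions from all earlier $j$ with $a_{i'}-a_j<m$, and these are exactly the indices whose $a^{(s)}$-blocks can still coincide with that of $i'$. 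My first attempt would be to take $\sigma(i)$ to be the component with the largest value of $a^{(s)}_i$ not in $S_i$, i.e.\ the smallest $s\notin S_i$. Such a component is the first to jump at later indices, which should make the compatibility check reduce to cases 1)–3) of Lemma \ref{lem:div with remainder}.
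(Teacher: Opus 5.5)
Your reduction is correct. With $b^{(s)}_i=\tilde b^{(s)}_i+e^{(s)}_i$, membership in $\mathcal{P}(0,1)$ is exactly $e^{(s)}\geq 0$ together with $e^{(s)}_i\geq e^{(s)}_{i-1}$ whenever $a^{(s)}_{i-1}=a^{(s)}_i$. You also note correctly that $e_i\geq e_{i-1}$ when $a_i=a_{i-1}$.

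The gap is the choice of carrier: the rule you commit to fails, and the fallback inequality is left unproved. Take $m=3$ and $((0,0),(1,3),(2,3))\in\mathcal{P}(0,3)$, so $e=(0,1,0)$. The vectors $(a^{(0)}_i,a^{(1)}_i,a^{(2)}_i)$ are $(0,0,0)$, $(1,0,0)$, $(1,1,0)$, hence $S_2=\{1,2\}$ and $S_3=\{0,2\}$. Your rule forces $\sigma(2)=0$, the only component outside $S_2$. But $0\in S_3$, so component $0$ becomes $(0,0),(1,1),(1,1)$, which is not strictly increasing.

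This is not an accident of tie-breaking. The complement of $S_i$ is the set of components that increase between $a_{i-1}$ and $a_i$. When $a_{i-1}<a_i<a_{i+1}$ and $a_{i+1}-a_{i-1}\leq m$, these increments occur at distinct residues, so the complements of $S_i$ and $S_{i+1}$ are disjoint. Your two requirements $\sigma(i)\notin S_i$ and $\sigma(i)\notin S_{i+1}$ therefore cannot both hold. Your heuristic is also reversed: the components with the larger value $q_i+1$ are the \emph{last} to increase as $a$ grows, not the first.

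The missing observation is that $\sigma(i)\notin S_i$ is not needed at all. Putting $e_i$ into a component whose block continues from $i-1$ only raises $e^{(s)}$ there. That is allowed, because $e^{(s)}_{i-1}=0$ for $s\neq\sigma(i-1)$. The only genuine constraint is $\sigma(i)\notin S_{i+1}$ when $a_{i+1}>a_i$.

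The paper meets this constraint by taking $\sigma(i)=r_i$, the residue of $a_i$ mod $m$. Since $a^{(s)}_i=\lfloor (a_i+m-1-s)/m\rfloor$, raising $a$ by one from $a_i$ increments exactly component $r_i$. Hence $a_{i+1}>a_i$ forces $a^{(r_i)}_{i+1}>a^{(r_i)}_i$. If instead $a_{i+1}=a_i$, then $r_{i+1}=r_i$ and $e_{i+1}\geq e_i$, so monotonicity holds in that component. No splitting or induction is required. In the example, $\sigma(2)=r_2=1$ makes component $1$ equal to $(0,0),(0,2),(1,0)$, and the decomposition goes through.
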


\begin{proof}
    Let $\left(\aaa^{(s)},\tilde{\bbb}^{(s)}\right)$ be as in Lemma \ref{lem:div with remainder}, and define
    \[ b^{(s)}_i = \begin{cases}
        \tilde{b}^{(s)}_i & s\neq r_i,\\
        \tilde{b}^{(s)}_i + b_i- \sum_{j=1}^{i-1}\max\{m-(a_i-a_j),0\} & s=r_i.
    \end{cases} \]
    By Lemma \ref{lem:div with remainder} we immediately have $a^{(0)}_i+\cdots+a^{(m-1)}_i = a_i$ and 
    \[ b^{(0)}_i+\cdots+b^{(m-1)}_i = \tilde{b}^{(0)}_i+\cdots+\tilde{b}^{(m-1)}_i + b_i- \sum_{j=1}^{i-1}\max\{m-(a_i-a_j),0\} =b_i \]
    for all $i$.  
    
    It remains to check that $\left(\aaa^{(s)},\bbb^{(s)}\right)\in \mathcal{P}(0,1)$, i.e. that the points $\left(a^{(s)}_1,b^{(s)}_1\right),\dots,\left(a^{(s)}_n,b^{(s)}_n\right)$ lie in $\Z^2_{\geq 0}$ and strictly increase in lexicographic order. The nonnegativity and integrality of $a^{(s)}_i$ and $\tilde{b}^{(s)}_i$ is clear, and the conditions defining $(\aaa,\bbb)\in \mathcal{P}(0,m)$ ensure that we also have $b^{(r_i)}_i -\tilde{b}^{(r_i)}_i \in \Z_{\geq 0}.$

    Finally, we check that $\left(a^{(s)}_i,b^{(s)}_i\right)<\left(a^{(s)}_{i+1},b^{(s)}_{i+1}\right)$ in lexicographic order. One sees from the construction that $a_i\leq a_{i+1}$ implies $a_i^{(s)}\leq a_{i+1}^{(s)}$ for all $s$. Now suppose that $s$ is such that $a_i^{(s)}= a_{i+1}^{(s)}$. In this case, we have 
    \[ \tilde{b}^{(s)}_{i+1} = \big|\left\{ j=1,\dots,i \, : \, a^{(s)}_j = a^{(s)}_{i+1} \right\}\big| = \big|\left\{ j=1,\dots,i-1 \, : \, a^{(s)}_j = a^{(s)}_{i} \right\}\cup \{i\} \big| = \tilde{b}^{(s)}_{i}+1. \]
    
    If we further have $s\neq r_i$, then $b^{(s)}_{i+1}\geq \tilde{b}^{(s)}_{i+1}> \tilde{b}^{(s)}_{i} = b^{(s)}_{i}$ as desired. 

    Now suppose on the other hand that $s=r_i$ so that $a_i^{(r_i)}= a_{i+1}^{(r_i)}$. We first claim that this combination of assumptions requires $a_i=a_{i+1}$. Indeed, if $a_i<a_{i+1}$ then we either have $q_i<q_{i+1}$ or $q_i=q_{i+1}$ and $r_i<r_{i+1}$. If $q_i<q_{i+1}$ then $a_i^{(r_i)} = q_i<q_{i+1}\leq a_{i+1}^{(r_i)}$, while if $q_i=q_{i+1}$ and $r_i<r_{i+1}$ then we have $a^{(r_i)}_{i+1} = q_{i+1}+1=q_i+1$, but $a^{(r_i)}_i = q_i$. This completes the proof of the claim that $a_i=a_{i+1}$ in this case.

    It follows that $s = r_i=r_{i+1}$, so that in this case we have
    \[ b^{(s)}_i = \tilde{b}^{(s)}_i + b_i- \sum_{j=1}^{i-1}\max\{m-(a_i-a_j),0\}, \]
    and 
    \[ b^{(s)}_{i+1} = \tilde{b}^{(s)}_{i+1} + b_{i+1}- \sum_{j=1}^{i}\max\{m-(a_{i+1}-a_j),0\}. \]
    When taking the difference $b^{(s)}_{i+1}-b^{(s)}_i$, the $j=1,\dots,i-1$ terms in the sums cancel leaving the $j=i$ term in the latter sum, which is $m$. We therefore compute
    \[ b^{(s)}_{i+1}-b^{(s)}_i = \tilde{b}^{(s)}_{i+1}-\tilde{b}^{(s)}_i+b_{i+1}-b_i-m. \]
    The same argument given above shows that $\tilde{b}^{(s)}_{i+1}-\tilde{b}^{(s)}_i = 1$, and we have $b_{i+1}-b_i\geq m$ by the definition of $\mathcal{P}(k,m)$. We therefore have $b^{(s)}_{i+1}-b^{(s)}_i \geq b_{i+1}-b_i-(m-1)\geq m-(m-1) = 1$. 
\end{proof}

\begin{example}\label{ex:decomp k=0}
    For this example we take $n=5$ and $m=3$. Consider the tuple $(a_1,\dots,a_5) = (0,2,2,3,4).$ Lemma \ref{lem:div with remainder} gives a decomposition of $(a_1,\dots,a_5)$ into the sequences $\left(a^{(0)}_1,\dots,a^{(0)}_5\right) = (0,1,1,1,2),$ $\left(a^{(1)}_1,\dots,a^{(1)}_5\right) = (0,1,1,1,1),$ and $\left(a^{(2)}_1,\dots,a^{(2)}_5\right) = (0,0,0,1,1)$. The $\widetilde{b}^{(s)}_i$ coordinates are the minimal nonnegative integers so that the points $\left(a^{(s)}_1,\widetilde{b}^{(s)}_1\right),\dots,\left(a^{(s)}_5,\widetilde{b}^{(s)}_5\right)$ strictly increase in lexicographic order. Specifically, we have
    \[ \left(\aaa^{(0)},\widetilde{\bbb}^{(0)}\right) = \left(\left(a^{(0)}_1,\widetilde{b}^{(0)}_1\right),\dots,\left(a^{(0)}_5,\widetilde{b}^{(0)}_5\right)\right) = ((0,0),(1,0),(1,1),(1,2),(2,0))\in \mathcal{P}(0,1), \]
    \[ \left(\aaa^{(1)},\widetilde{\bbb}^{(1)}\right) = \left(\left(a^{(1)}_1,\widetilde{b}^{(1)}_1\right),\dots,\left(a^{(1)}_5,\widetilde{b}^{(1)}_5\right)\right) = ((0,0),(1,0),(1,1),(1,2),(1,3))\in \mathcal{P}(0,1), \]
    \[ \left(\aaa^{(2)},\widetilde{\bbb}^{(2)}\right)=\left(\left(a^{(2)}_1,\widetilde{b}^{(2)}_1\right),\dots,\left(a^{(2)}_5,\widetilde{b}^{(2)}_5\right)\right) = ((0,0),(0,1),(0,2),(1,0),(1,1))\in \mathcal{P}(0,1). \]
    Taking the coordinate-wise sum of these tuples gives the tuple
    \[ \left(\aaa,\widetilde{\bbb}\right) =\left(\left(a_1,\widetilde{b}_1\right),\dots,\left(a_5,\widetilde{b}_5\right)\right) = ((0,0),(2,1),(2,4),(3,4),(4,4))\in \mathcal{P}(0,3). \]
    One checks these $\widetilde{b}_i$ coordinates are minimal with respect to the $a$-coordinates to have $(\aaa,\bbb)\in \mathcal{P}(0,3)$. In other words, for all $i=1,\dots,5$ we have 
    \[ \widetilde{b}_i = \sum_{j=1}^{i-1}\max\{ 3-(a_i-a_j),0 \}. \]
    
    \begin{figure}[h]
    \centering
    \begin{subfigure}{.2\textwidth}
        \begin{tikzpicture}[scale = .7]
        \draw[<->, thick] (3,0)--(0,0)--(0,4);
        \draw[thick] (1,.2)--(1,-.2) node[below] {$1$};
        \draw[thick] (2,.2)--(2,-.2) node[below] {$2$};
        \draw[thick] (.2,1)--(-.2,1) node[left] {$1$};
        \draw[thick] (.2,2)--(-.2,2) node[left] {$2$};
        \draw[thick] (.2,3)--(-.2,3) node[left] {$3$};
        \filldraw[black] (0,0) circle (3pt);
        \filldraw[black] (1,0) circle (3pt);
        \filldraw[black] (1,1) circle (3pt);
        \filldraw[black] (1,2) circle (3pt);
        \filldraw[black] (2,0) circle (3pt);
    \end{tikzpicture}
    \caption{$\left(\aaa^{(0)},\widetilde{\bbb}^{(0)}\right)$}
    \end{subfigure}%
    \begin{subfigure}{.2\textwidth}
        \begin{tikzpicture}[scale = .7]
        \draw[<->, thick] (3,0)--(0,0)--(0,4);
        \draw[thick] (1,.2)--(1,-.2) node[below] {$1$};
        \draw[thick] (2,.2)--(2,-.2) node[below] {$2$};
        \draw[thick] (.2,1)--(-.2,1) node[left] {$1$};
        \draw[thick] (.2,2)--(-.2,2) node[left] {$2$};
        \draw[thick] (.2,3)--(-.2,3) node[left] {$3$};
        \filldraw[black] (0,0) circle (3pt);
        \filldraw[black] (1,0) circle (3pt);
        \filldraw[black] (1,1) circle (3pt);
        \filldraw[black] (1,2) circle (3pt);
        \filldraw[black] (1,3) circle (3pt);
    \end{tikzpicture}
    \caption{$\left(\aaa^{(1)},\widetilde{\bbb}^{(1)}\right)$}
    \end{subfigure}%
    \begin{subfigure}{.2\textwidth}
        \begin{tikzpicture}[scale = .7]
        \draw[<->, thick] (3,0)--(0,0)--(0,4);
        \draw[thick] (1,.2)--(1,-.2) node[below] {$1$};
        \draw[thick] (2,.2)--(2,-.2) node[below] {$2$};
        \draw[thick] (.2,1)--(-.2,1) node[left] {$1$};
        \draw[thick] (.2,2)--(-.2,2) node[left] {$2$};
        \draw[thick] (.2,3)--(-.2,3) node[left] {$3$};
        \filldraw[black] (0,0) circle (3pt);
        \filldraw[black] (0,1) circle (3pt);
        \filldraw[black] (0,2) circle (3pt);
        \filldraw[black] (1,0) circle (3pt);
        \filldraw[black] (1,1) circle (3pt);
    \end{tikzpicture}
    \caption{$\left(\aaa^{(2)},\widetilde{\bbb}^{(2)}\right)$}
    \end{subfigure}%
    \begin{subfigure}{.3\textwidth}
        \begin{tikzpicture}[scale = .7]
        \draw[<->, thick] (5,0)--(0,0)--(0,5);
        \draw[thick] (1,.2)--(1,-.2) node[below] {$1$};
        \draw[thick] (2,.2)--(2,-.2) node[below] {$2$};
        \draw[thick] (3,.2)--(3,-.2) node[below] {$3$};
        \draw[thick] (4,.2)--(4,-.2) node[below] {$4$};
        \draw[thick] (.2,1)--(-.2,1) node[left] {$1$};
        \draw[thick] (.2,2)--(-.2,2) node[left] {$2$};
        \draw[thick] (.2,3)--(-.2,3) node[left] {$3$};
        \draw[thick] (.2,4)--(-.2,4) node[left] {$4$};
        \filldraw[black] (0,0) circle (3pt);
        \filldraw[black] (2,1) circle (3pt);
        \filldraw[black] (2,4) circle (3pt);
        \filldraw[black] (3,4) circle (3pt);
        \filldraw[black] (4,4) circle (3pt);
    \end{tikzpicture}
    \caption{$\left(\aaa,\widetilde{\bbb}\right)\in \mathcal{P}(0,3)$}
    \end{subfigure}%
    \caption{The decomposition of a point $(\aaa,\widetilde{\bbb})\in \mathcal{P}(0,3)$ with minimal $\widetilde{\bbb}$-coordinates.}
    \label{fig:P(0,k) decomp}
    \end{figure}
    For an arbitrary point in $\mathcal{P}(0,3)$ with this sequence of $a$-coordinates, the decomposition is constructed by increasing certain $\widetilde{b}^{(s)}_i$'s in such a way that preserves the lexicographic order of each tuple. For example, the decomposition of
    \[ ((a_1,b_1),\dots,(a_5,b_5)) = ((0,5),(2,1),(2,10),(3,7),(4,4)) \in \mathcal{P}(0,3). \]
    constructed following the proof of Lemma \ref{lem:k=0 decomp} is
    \[ \left(\left(a^{(0)}_1,b^{(0)}_1\right),\dots,\left(a^{(0)}_5,b^{(0)}_5\right)\right) = ((0,5),(1,0),(1,1),(1,5),(2,0))\in \mathcal{P}(0,1), \]
    \[ \left(\left(a^{(1)}_1,b^{(1)}_1\right),\dots,\left(a^{(1)}_5,b^{(1)}_5\right)\right) = ((0,0),(1,0),(1,1),(1,2),(1,3))\in \mathcal{P}(0,1), \]
    \[ \left(\left(a^{(2)}_1,b^{(2)}_1\right),\dots,\left(a^{(2)}_5,b^{(2)}_5\right)\right) = ((0,0),(0,1),(0,8),(1,0),(1,1))\in \mathcal{P}(0,1). \]
    This modification cannot be done arbitrarily. For example, we had to increase $\widetilde{b}^{(0)}_4$ since increasing either $\widetilde{b}^{(1)}_4$ or $\widetilde{b}^{(2)}_4$ would not maintain the lexicographic ordering. This is clear from Figure \ref{fig:P(0,k) decomp}, where one sees that only the collection $(\aaa^{(0)},\bbb^{(0)})$ has its fourth point as the highest point in its column.  
\end{example}

Now we can give the complete proof of Proposition \ref{prop:decomp}.

\begin{proof}[Proof of Proposition \ref{prop:decomp}]
    Given $(\aaa,\bbb)\in \mathcal{P}(k,m)$, we define $(\aaa,\overline{\bbb})= \big( (a_1,\overline{b}_1),\dots,(a_n,\overline{b}_n) \big)$ by $\overline{b}_i = b_i - \max\{ k-a_i,0 \}$. Comparing the definitions of $\mathcal{P}(k,m)$ and $\mathcal{P}(0,m)$ it is clear that this operation gives $(\aaa,\overline{\bbb})\in \mathcal{P}(0,m)$.

    Let $(\aaa,\overline{\bbb}) = \left(\aaa^{(0)},\overline{\bbb}^{(0)}\right)+\cdots+\left(\aaa^{(m-1)},\overline{\bbb}^{(m-1)}\right)$ be a decomposition as in Lemma \ref{lem:k=0 decomp}. For ease of notation, we set 
    \[ k^{(s)} = \begin{cases}
        q+1 & s< r,\\
        q & s\geq r.
    \end{cases} \]
    We now modify this decomposition of $(\aaa,\overline{\bbb})$ by setting $b^{(s)}_i = \overline{b}^{(s)}_i +\max\left\{k^{(s)} - a^{(s)}_i,0\right\}.$
    Again comparing the definitions of $\mathcal{P}(0,1)$ and $\mathcal{P}(k^{(s)},1)$, this operation clearly gives $\left(\aaa^{(s)},\bbb^{(s)}\right)\in \mathcal{P}\left(k^{(s)},1\right)$ for all $s$ as desired.
    
    It remains to check that we have $\left(\aaa,\bbb\right) = \left(\aaa^{(0)},\bbb^{(0)}\right)+\cdots+\left(\aaa^{(m-1)},\bbb^{(m-1)}\right)$. The equality on $\aaa$'s is given by the choice of the decomposition of $\left(\aaa,\overline{\bbb}\right)$ as in Lemma \ref{lem:k=0 decomp}. Toward the equality for $\bbb$'s, we similarly have by construction 
    \[ \overline{b}^{(0)}_i+\cdots+\overline{b}^{(m-1)}_i = \overline{b}_i = b_i - \max\{ k-a_i,0 \}. \]
    By the choice of $b_i^{(s)}$ we also have 
    \[ b_i^{(0)} + \cdots b_i^{(m-1)} = \overline{b}^{(0)}_i+\cdots+\overline{b}^{(m-1)}_i + \sum_{s=0}^{m-1} \max\left\{k^{(s)} - a^{(s)}_i,0\right\}. \]
    It therefore suffices to show 
    \begin{equation}\label{eq:differences div with rem}
        \max\{ k-a_i,0 \} = \sum_{s=0}^{m-1} \max\left\{k^{(s)} - a^{(s)}_i,0\right\}. 
    \end{equation} 
    Indeed, if $k\geq a_i$ then we have $k^{(s)}\geq a^{(s)}_i$ for all $s$. In this case, we can compute
    \[ \sum_{s=0}^{m-1} \max\left\{k^{(s)} - a^{(s)}_i,0\right\} = \sum_{s=0}^{m-1}\left(k^{(s)}-a_i^{(s)}\right) = k-a_i = \max\{ k-a_i,0 \} \]
    as desired. In the other case we have $k<a_i$ and $k^{(s)}\leq a^{(s)}_i$ for all $s$ and so both sides of \eqref{eq:differences div with rem} are zero. This completes the proof of the proposition.
\end{proof}

\begin{example}
    In this example we again take $n=5$ and $m=3$. We describe the decomposition of the tuple
    \[ ((a_1,b_1),\dots,(a_5,b_5)) = ((0,10),(2,4),(2,13),(3,9),(4,5)) \in \mathcal{P}(5,3) \]
    constructed in the proof of Proposition \ref{prop:decomp}. First, the transformed tuple is
    \[ ((a_1,\overline{b}_1),\dots,(a_5,\overline{b}_5)) = ((0,5),(2,1),(2,10),(3,7),(4,4)) \in \mathcal{P}(0,3). \]
    The decomposition of this tuple was given in Example \ref{ex:decomp k=0}. In this example we have $k=5$ so that $k^{(0)} = 2$, $k^{(1)} = 2$, and $k^{(2)} = 1$. We then set $b^{(s)}_i = \overline{b}^{(s)}_i +\max\left\{k^{(s)} - a^{(s)}_i,0\right\},$ which gives
    \[ \left(\left(a^{(0)}_1,b^{(0)}_1\right),\dots,\left(a^{(0)}_5,b^{(0)}_5\right)\right) = ((0,7),(1,1),(1,2),(1,6),(2,0)) \in \mathcal{P}(2,1), \]
    \[ \left(\left(a^{(1)}_1,b^{(1)}_1\right),\dots,\left(a^{(1)}_5,b^{(1)}_5\right)\right) = ((0,2),(1,1),(1,2),(1,3),(1,4))\in \mathcal{P}(2,1), \]
    \[ \left(\left(a^{(2)}_1,b^{(2)}_1\right),\dots,\left(a^{(2)}_5,b^{(2)}_5\right)\right) = ((0,1),(0,2),(0,9),(1,0),(1,1))\in \mathcal{P}(1,1). \]
    One checks that the coordinate-wise sum of these points gives the original tuple in $\mathcal{P}(5,3)$.

    \begin{figure}[h]
    \centering
    \begin{subfigure}{.2\textwidth}
        \begin{tikzpicture}[scale = .7]
        \draw[<->, thick] (3,0)--(0,0)--(0,4);
        \draw[thick] (1,.2)--(1,-.2) node[below] {$1$};
        \draw[thick] (2,.2)--(2,-.2) node[below] {$2$};
        \draw[thick] (.2,1)--(-.2,1) node[left] {$1$};
        \draw[thick] (.2,2)--(-.2,2) node[left] {$2$};
        \draw[thick] (.2,3)--(-.2,3) node[left] {$3$};
        \filldraw[black] (0,2) circle (3pt);
        \filldraw[black] (1,1) circle (3pt);
        \filldraw[black] (1,2) circle (3pt);
        \filldraw[black] (1,3) circle (3pt);
        \filldraw[black] (2,0) circle (3pt);
    \end{tikzpicture}
    \caption{$\left(\aaa^{(0)},\bbb^{(0)}\right)$}
    \end{subfigure}%
    \begin{subfigure}{.2\textwidth}
        \begin{tikzpicture}[scale = .7]
        \draw[<->, thick] (3,0)--(0,0)--(0,4);
        \draw[thick] (1,.2)--(1,-.2) node[below] {$1$};
        \draw[thick] (2,.2)--(2,-.2) node[below] {$2$};
        \draw[thick] (.2,1)--(-.2,1) node[left] {$1$};
        \draw[thick] (.2,2)--(-.2,2) node[left] {$2$};
        \draw[thick] (.2,3)--(-.2,3) node[left] {$3$};
        \filldraw[black] (0,2) circle (3pt);
        \filldraw[black] (1,1) circle (3pt);
        \filldraw[black] (1,2) circle (3pt);
        \filldraw[black] (1,3) circle (3pt);
        \filldraw[black] (1,4) circle (3pt);
    \end{tikzpicture}
    \caption{$\left(\aaa^{(1)},\bbb^{(1)}\right)$}
    \end{subfigure}%
    \begin{subfigure}{.2\textwidth}
        \begin{tikzpicture}[scale = .7]
        \draw[<->, thick] (3,0)--(0,0)--(0,4);
        \draw[thick] (1,.2)--(1,-.2) node[below] {$1$};
        \draw[thick] (2,.2)--(2,-.2) node[below] {$2$};
        \draw[thick] (.2,1)--(-.2,1) node[left] {$1$};
        \draw[thick] (.2,2)--(-.2,2) node[left] {$2$};
        \draw[thick] (.2,3)--(-.2,3) node[left] {$3$};
        \filldraw[black] (0,1) circle (3pt);
        \filldraw[black] (0,2) circle (3pt);
        \filldraw[black] (0,3) circle (3pt);
        \filldraw[black] (1,0) circle (3pt);
        \filldraw[black] (1,1) circle (3pt);
    \end{tikzpicture}
    \caption{$\left(\aaa^{(2)},\bbb^{(2)}\right)$}
    \end{subfigure}%
    \label{fig:P(0,k) decomp not minimal}
    \caption{The decomposition of a point $(\aaa,\bbb)\in \mathcal{P}(5,3)$ }
    \end{figure}
\end{example}

We will need one additional decomposition lemma.

\begin{lemma}\label{lem: m=1 decomp}
    For all $k\geq n-1$ we have $\mathcal{P}(k,1)+\mathcal{P}(1,0)= \mathcal{P}(k+1,1)$.
\end{lemma}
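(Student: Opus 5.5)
The inclusion $\mathcal{P}(k,1)+\mathcal{P}(1,0)\subseteq \mathcal{P}(k+1,1)$ is the general superadditivity $\mathcal{P}(k,m)+\mathcal{P}(k',m')\subseteq\mathcal{P}(k+k',m+m')$ noted in the proof of Corollary \ref{cor:Minkowski decomp}. For the reverse inclusion, we use the descriptions of $\mathcal{P}(k,1)$ and $\mathcal{P}(1,0)$ given after Definition \ref{def:P(k,m)}. Fix $(\aaa,\bbb)\in\mathcal{P}(k+1,1)$, i.e. distinct lex-increasing points with $a_i+b_i\geq k+1$. The plan is to subtract from each point a single unit vector, either $(0,1)$ or $(1,0)$. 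Such a subtracted tuple lies in $\mathcal{P}(1,0)$ exactly when it is weakly lex increasing. Since $(0,1)<(1,0)$ lexicographically, this forces the shape: $(0,1)$ for $i\leq t$ and $(1,0)$ for $i>t$, for some cut index $0\leq t\leq n$.

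Any such choice keeps $a_i+b_i-1\geq k$. So the remainder lies in $\mathcal{P}(k,1)$ provided three conditions hold:
\begin{enumerate}
\item[(i)] $b_i\geq 1$ for $i\leq t$;
\item[(ii)] $a_i\geq1$ for $i>t$;
\item[(iii)] the remainder is still strictly lex increasing.
\end{enumerate}
Within each block, subtracting the same vector preserves strict lex order, so (iii) only concerns the boundary pair. There we need $(a_t,b_t-1)<(a_{t+1}-1,b_{t+1})$, which certainly holds if $a_{t+1}\geq a_t+2$.

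Choosing $t$ splits into cases. If $a_1\geq1$, take $t=0$. If every $b_i\geq1$, take $t=n$. Otherwise $a_1=0$ and some point has $b_j=0$, hence $a_j\geq k+1\geq n$. The set of occupied columns $\{a_1,\dots,a_n\}$ has at most $n$ elements, contains $0$, and contains a value $\geq n$. Therefore it cannot be an interval $\{0,1,\dots,c\}$, and some consecutive occupied columns differ by at least $2$. Let columns $0,1,\dots,c$ be the maximal initial run of occupied columns, so the next occupied column is $\geq c+2$. Take $t$ to be the last index with $a_t\leq c$.

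This gives (iii) at the boundary and (ii) for $i>t$, since $a_i\geq c+2\geq 1$. For (i), note the run uses $c+1$ distinct columns, and at least one point lies beyond it, so $c+1\leq n-1$. Hence $a_i\leq c\leq n-2\leq k-1$ for $i\leq t$, which gives $b_i\geq k+1-a_i\geq 2$.

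The only real content is this pigeonhole step, which is exactly where the hypothesis $k\geq n-1$ is used. I expect verifying the boundary lex condition and the column-counting bound to be the main (mild) obstacle; everything else is bookkeeping with the definitions.
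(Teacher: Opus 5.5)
Your proposal is correct and is essentially the paper's proof: subtract $(0,1)$ from the points lying in columns to the left of an unoccupied column and $(1,0)$ from those to the right, with the unoccupied column guaranteed by a pigeonhole argument using $k\geq n-1$. The paper avoids your three-case split by directly choosing any $a\in\{0,\dots,k+1\}$ with $a\neq a_i$ for all $i$ (possible since $k+2>n$) and cutting there, so that $a_i<a\le k+1$ forces $b_i\ge 1$ on the left, and the extreme cuts (nothing on the left, or nothing on the right) occur as special instances rather than separate cases.
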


\begin{proof}
    The inclusion $ \mathcal{P}(k,1)+\mathcal{P}(1,0)\subseteq \mathcal{P}(k+1,1)$ is clear from the additivity of the conditions defining $\mathcal{P}(k,m)$. Toward the other inclusion, take $(\aaa,\bbb) = \big((a_1,b_1),\dots,(a_n,b_n)\big)\in \mathcal{P}(k+1,1)$. Since $k+2>n$ by assumption, there is some $a = 0,\dots,k+1$ such that none of the $a_i$'s are equal to $a$, and we set $N = |\left\{i=1,\dots,n \, :\ \, a_i<a\right\}|$. Then for $N<n$ we get
    $$
    0\le a_1\le \cdots\le a_N<a<a_{N+1}\le \cdots \le a_n.
    $$
    and for $N=n$ we get
    $$
    0\le a_1\le \cdots\le a_n<a.
    $$
    For $N<n$  we may decompose $(\aaa,\bbb)$ as the sum of the tuple
    \[ (\aaa',\bbb') = \big( (a_1,b_1-1),\dots (a_N,b_N-1),(a_{N+1}-1,b_{N+1}),\dots,(a_n-1,b_n)\big) \in \mathcal{P}(k,1) \]
    and the tuple
    \[ (\aaa'',\bbb'') = \big( (0,1),\dots (0,1),(1,0),\dots,(1,0)\big) \in \mathcal{P}(1,0), \]
    where in the latter tuple there are $N$ points equal to $(0,1)$. Note that  for $1\le i\le N$ we have $a_i<a\le k+1$ and $a_i+b_i\ge k+1$, so $b_i>0$. Also,
    $a_{N}<a<a_{N+1}$ implies $a_{N+1}-1>a_{N}$, so the points in $(\aaa',\bbb')$ are strictly lexicographically ordered.  

    For $N=n$ we decompose  decompose $(\aaa,\bbb)$ as the sum of the tuple
    \[ (\aaa',\bbb') = \big( (a_1,b_1-1),\dots (a_n,b_n-1) \big) \in \mathcal{P}(k,1) \]
    and the tuple
    \[ (\aaa'',\bbb'') = \big( (0,1),\dots (0,1)\big) \in \mathcal{P}(1,0). \]
\end{proof}

\begin{example}
    We illustrate the decomposition in Lemma \ref{lem: m=1 decomp} in the case $n=3$. Consider the triple of points $(\aaa,\bbb) = \big( (0,3),(1,2),(3,0) \big)\in \mathcal{P}(1,3)$. The only $a=0,1,2,3$ for which $a_i\neq a$ for all $i$ is $a=2$. 
    \begin{figure}[h]
    \centering
    \begin{subfigure}{.35\textwidth}
        \begin{tikzpicture}[scale = .7]
        \draw[<->, thick] (5,0)--(0,0)--(0,5);
        \draw[thick] (1,.2)--(1,-.2) node[below] {$1$};
        \draw[thick] (2,.2)--(2,-.2) node[below] {$2$};
        \draw[thick] (3,.2)--(3,-.2) node[below] {$3$};
        \draw[thick] (4,.2)--(4,-.2) node[below] {$4$};
        \draw[thick] (.2,1)--(-.2,1) node[left] {$1$};
        \draw[thick] (.2,2)--(-.2,2) node[left] {$2$};
        \draw[thick] (.2,3)--(-.2,3) node[left] {$3$};
        \draw[thick] (.2,4)--(-.2,4) node[left] {$4$};
        \filldraw[opacity = .4] (0,0)--(0,3)--(3,0)--cycle;
        \filldraw[black] (0,3) circle (3pt) node[above right]{$p_1$};
        \filldraw[black] (1,2) circle (3pt) node[above right]{$p_2$};
        \filldraw[black] (3,0) circle (3pt) node[above right]{$p_3$};
    \end{tikzpicture}
    \caption{$(a,b)\in\mathcal{P}(1,3)$}
    \end{subfigure}%
    \hspace{.03\textwidth}
    \begin{subfigure}{.25\textwidth}
        \begin{tikzpicture}[scale = .7]
        \draw[<->, thick] (5,0)--(0,0)--(0,5);
        \draw[thick] (1,.2)--(1,-.2) node[below] {$1$};
        \draw[thick] (2,.2)--(2,-.2) node[below] {$2$};
        \draw[thick] (3,.2)--(3,-.2) node[below] {$3$};
        \draw[thick] (4,.2)--(4,-.2) node[below] {$4$};
        \draw[thick] (.2,1)--(-.2,1) node[left] {$1$};
        \draw[thick] (.2,2)--(-.2,2) node[left] {$2$};
        \draw[thick] (.2,3)--(-.2,3) node[left] {$3$};
        \draw[thick] (.2,4)--(-.2,4) node[left] {$4$};
        \filldraw[opacity = .4] (0,0)--(0,2)--(2,0)--cycle;
        \filldraw[black] (0,2) circle (3pt) node[above right]{$p_1$};
        \filldraw[black] (1,1) circle (3pt) node[above right]{$p_2$};
        \filldraw[black] (2,0) circle (3pt) node[above right]{$p_3$};
    \end{tikzpicture}
        \caption{$(a',b')\in\mathcal{P}(1,2)$}
            \end{subfigure}%
    \hspace{.03\textwidth}
    \begin{subfigure}{.25\textwidth}
        \begin{tikzpicture}[scale = .7]
        \draw[<->, thick] (5,0)--(0,0)--(0,5);
        \draw[thick] (1,.2)--(1,-.2) node[below] {$1$};
        \draw[thick] (2,.2)--(2,-.2) node[below] {$2$};
        \draw[thick] (3,.2)--(3,-.2) node[below] {$3$};
        \draw[thick] (4,.2)--(4,-.2) node[below] {$4$};
        \draw[thick] (.2,1)--(-.2,1) node[left] {$1$};
        \draw[thick] (.2,2)--(-.2,2) node[left] {$2$};
        \draw[thick] (.2,3)--(-.2,3) node[left] {$3$};
        \draw[thick] (.2,4)--(-.2,4) node[left] {$4$};
        \filldraw[opacity = .4] (0,0)--(0,1)--(1,0)--cycle;
        \filldraw[black] (0,1) circle (3pt) node[above right]{$p_1=p_2$};
        \filldraw[black] (1,0) circle (3pt) node[above right]{$p_3$};
    \end{tikzpicture}
        \caption{$(a'',b'')\in \mathcal{P}(0,1)$}
    \end{subfigure}
    \end{figure}
\end{example}

The constraint $k\geq n-1$ is necessary for Lemma \ref{lem:k=0 decomp} as the following lemma shows.

\begin{lemma}\label{lem:strict inclusions}
    For all $k<n-1$, we have $A^1_{\geq k}\cdot A^0_{\geq 1} \subsetneq A^1_{\geq k+1}$.
\end{lemma}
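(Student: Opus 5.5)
The inclusion $A^1_{\geq k}\cdot A^0_{\geq 1}\subseteq A^1_{\geq k+1}$ holds because every term of a product has exponent sums $a_i+b_i$ that add. For strictness, we will exhibit a trailing term realized in $A^1_{\geq k+1}$ that cannot be a trailing term of any element of $A^1_{\geq k}\cdot A^0_{\geq 1}$.

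First we control trailing terms of that product. An element of the product is a finite sum $\sum f_j g_j$ with $f_j\in A^1_{\geq k}$ and $g_j\in A^0_{\geq 1}$. Since cancellation can occur, trailing terms of such sums are not automatically sums of trailing terms, so we use the rational/graded structure instead. The plan is to work in a single bidegree and compare dimensions. Fix the bidegree $(q^{d_1},t^{d_2})$ of a well-chosen $\Delta_S$ with $S\in\mathcal{P}(k+1,1)$. By Theorem \ref{thm:HS lead terms}, $\dim (A^1_{\geq k+1})_{d}$ equals the number of points of $\mathcal{P}(k+1,1)$ in that degree. On the other hand, $A^1_{\geq k}\cdot A^0_{\geq 1}$ in degree $d$ is spanned by products $\Delta_{S'}\cdot g$, where $S'\in\mathcal{P}(k,1)$ and $g$ ranges over a monomial-symmetric basis of $A^0_{\geq 1}$ of complementary degree. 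A cleaner route is to pick the smallest possible degree. Take $S$ to be the $n$ lowest points on or above the line $a+b=k+1$ chosen to have minimal total degree, e.g. $S=\{(a,k+1-a): 0\le a\le n-1\}$. This is possible with $n\le k+2$. We need $k<n-1$, i.e. $n\ge k+2$, so take the whole antidiagonal $a+b=k+1$ (which has $k+2$ points) together with $n-k-2$ further points of degree $k+2$. The total degree is then $D=n(k+1)+(n-k-2)$.

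The key step is to show that every nonzero element of $A^1_{\geq k}\cdot A^0_{\geq 1}$ has total degree $>D$, or at least that in degree $D$ this product misses $\Delta_S$. Any $\Delta_{S'}$ with $S'\in\mathcal{P}(k,1)$ has degree at least $\min \deg$ over $n$ distinct points with $a+b\ge k$. Since $k+1\le n-1$ points fit on $a+b=k$, the minimum is $(k+1)k+(\text{remaining } n-k-1 \text{ points at degree } \ge k+1)$. Multiplying by $g\in A^0_{\geq 1}$ adds degree at least $n$, since every variable pair in each term has $a_i+b_i\ge1$. This yields a lower bound $k(k+1)+(n-k-1)(k+1)+n = n(k+1)+n-k-1$... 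Comparing with $D$ gives a surplus of $1$, so every element of the product has total degree $\ge D+1>D$ while $\Delta_S\ne0$ lies in $A^1_{\geq k+1}$ in degree $D$. Hence the inclusion is strict.

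The main obstacle is carefully justifying the minimal-degree counts. For $A^1_{\geq k}$, antisymmetry forces the exponent pairs appearing in any term to be distinct, and the number of lattice points with $a+b=j$ is $j+1$. Minimizing the total degree of $n$ distinct points with $a+b\ge k$ (respectively $\ge k+1$) is a greedy count, and the condition $k<n-1$ is exactly what forces overflow past the first antidiagonal in both cases, producing the gap of one. I would double-check the arithmetic of the two greedy fillings, including the case where $n$ exceeds $k+2+k+3$ and several antidiagonals fill. In that case the general bound is to compare $\sum$ of the $n$ smallest values of $a+b$ over distinct points with $a+b\ge k$, plus $n$, against the same sum with $a+b\ge k+1$. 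The difference is $n$ minus the shift gained, and the shift is at most $n-1$ precisely when the lowest antidiagonal $a+b=k$ is not skipped, which holds since it has only $k+1<n$ points.
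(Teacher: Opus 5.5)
Your overall strategy matches the paper's. Every element of $A^1_{\geq k}\cdot A^0_{\geq 1}$ lives in total degree $\geq d(k)+n$, where $d(k)$ is the minimal total degree of $n$ distinct lattice points with $a+b\geq k$. Meanwhile $A^1_{\geq k+1}$ contains a nonzero $\Delta_S$ of smaller degree. The supporting facts you cite are correct: exponent pairs in antisymmetric terms are distinct, and $a_i+b_i\geq 1$ forces degree $\geq n$ in $A^0_{\geq 1}$.

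\textbf{The gap.} The argument as written does not establish $d(k+1)<d(k)+n$ for all $k<n-1$. Your configuration is the full antidiagonal $a+b=k+1$ together with $n-k-2$ points on $a+b=k+2$. It exists only when $n-k-2\leq k+3$, that is, $n\leq 2k+5$. Beyond that range the true minimal degree in $A^1_{\geq k+1}$ exceeds $n(k+1)+(n-k-2)$. Your crude lower bound $k(k+1)+(n-k-1)(k+1)+n$ for the product then no longer separates the two.

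For example, take $k=0$ and $n=6$. The minimal degree in $A^1_{\geq 1}$ is $2\cdot 1+3\cdot 2+3=11$, which equals your crude bound $11$. The true bound $d(0)+n=8+6=14$ does separate, but you must actually prove the general comparison. Your last paragraph names the right inequality $d(k+1)\leq d(k)+n-1$. However, the stated reason (``the lowest antidiagonal is not skipped'') is not an argument.

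\textbf{The fix, which is the paper's argument.}
\begin{enumerate}
\item Take $S$ of minimal degree $d(k)$ with all $a+b\geq k$. Since $n>k+1$ and the antidiagonal $a+b=k$ has only $k+1$ points, $S$ contains a point with $a+b\geq k+1$.
\item Translate $S$ by $(1,0)$, which corresponds to $\Delta_S\cdot x_1\cdots x_n$. The result has all $a+b\geq k+1$, total degree $d(k)+n$, and contains a point with $a+b\geq k+2$.
\item Replace that point by $(0,k+1)$. This point is not in the translate, since every translated point has $a\geq 1$.
\end{enumerate}
This yields a nonzero element of $A^1_{\geq k+1}$ of degree at most $d(k)+n-1$.

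An equivalent counting argument also works. List the values $a+b$ of all lattice points with $a+b\geq k$ in weakly increasing order. The $i$-th value for threshold $k+1$ is the $(i+k+1)$-st value for threshold $k$. That value exceeds the $i$-th value for threshold $k$ by at most $1$, and by $0$ when $i=k+2$.

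With either justification in place of your explicit two-antidiagonal count, the rest of your argument goes through.
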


\begin{proof}
    We make use of the grading on $A^m_{\geq k}$ by total degree, i.e. the grading on the sets $\mathcal{P}(m,k)$ by $a_1+\cdots+a_n+b_1+\cdots+b_n$. The smallest nonzero total degree of an element in $A^0_{\geq 1}$ is easily seen to be $n$, obtained for example by $x_1\cdots x_n$ corresponding to $\{(1,0),\dots,(1,0)\} \in \mathcal{P}(0,1)$. To describe the smallest nonzero total degree of an element of $A^1_{\geq k}$, we let $r\geq 0$ and $0\leq s \leq k+r$ be such that
    \[ n = (k+1)+(k+2)+\cdots+(k+r)+s. \]
    The smallest degree of an element of $A^1_{\geq k}$ is 
    \[d(k) = k(k+1)+(k+1)(k+2)+\cdots+(k+r-1)(k+r)+(k+r)s,\]
    obtained by $\Delta_S$ where $S\subseteq \N^2_{\geq k}$ contains all points $(a,b)$ with $a+b = k,k+1,\dots,k+r-1$ and any $s$ points $(a,b)$ with $a+b = k+r$. Note that if $k\geq n-1$ then we have $r=0$ and $n=s$ corresponding to a choice of $S\subseteq \N^2_{\geq k}$ where every point $(a,b)\in S$ has $a+b=k$, while if $k<n-1$ then $S$ necessarily contains a point $(a,b)$ with $a+b>k$.

    We claim that for $k<n-1$ we have $d(k+1)<d(k)+n$, so that $A^1_{\geq k+1}$ contains elements of smaller total degree than does $A^1_{\geq k}\cdot A^0_{\geq 1}$. Indeed, we can give an element of $A^1_{\geq k+1}$ of degree $d(k)+n$ by multiplying some determinant $\Delta_S\in A^1_{\geq k}$ of smallest degree by $x_1\cdots x_n$. If we write $S = \{ (a_1,b_1),\dots,(a_n,b_n)\}$, then the resulting element of $A^1_{\geq k+1}$ is $\Delta_{S'}$ where $S' = \{ (a_1+1,b_1),\dots,(a_n+1,b_n)\}$. But if $k<n-1$, then $S'$ will include some point $(a,b)$ with $a+b>k+1$ and we can give a determinant in $A^1_{\geq k+1}$ of smaller total degree than $\Delta_{S'}$ by replacing this point of $S'$ with $(0,k+1)$. This completes the proof.
\end{proof}

\begin{example}
    We illustrate the failure of equality between $A^1_{\geq 2}\cdot A^0_{\geq 1}$ and $A^1_{\geq 3}$ in the case $n=9$. The smallest total degree of an element of $A^1_{\geq 2}$ is $2\cdot 3+3\cdot 4+4\cdot 2 = 26$, corresponding to the decomposition $9 = 3+4+2$. One possible choice of subset $S\subseteq \N^2_{\geq 2}$ for which $\Delta_S$ has degree $26$ is given below. The minimum total degree of an element of $A^1_{\geq 2}\cdot A^0_{\geq 1x}$ is therefore $26+9 = 35$, and one can give a determinant $\Delta_{S'}$ of this degree by setting $S' = \{ (a+1,b) \, | \, (a,b)\in S\}$. However, there are elements of $A^1_{\geq 3}$ of smaller total degree than this, for example $\Delta_{S''}$ where $S''= S'\cup\{ (0,3) \}\setminus \{ (2,3) \}$. This exhibits the nonequality $A^1_{\geq 2}\cdot A^0_{\geq 1} \subsetneq A^1_{\geq 3}$.
    \begin{figure}[h]
    \centering
    \begin{subfigure}{.35\textwidth}
        \begin{tikzpicture}[scale = .7]
        \draw[<->, thick] (5,0)--(0,0)--(0,5);
        \draw[thick] (1,.2)--(1,-.2) node[below] {$1$};
        \draw[thick] (2,.2)--(2,-.2) node[below] {$2$};
        \draw[thick] (3,.2)--(3,-.2) node[below] {$3$};
        \draw[thick] (4,.2)--(4,-.2) node[below] {$4$};
        \draw[thick] (.2,1)--(-.2,1) node[left] {$1$};
        \draw[thick] (.2,2)--(-.2,2) node[left] {$2$};
        \draw[thick] (.2,3)--(-.2,3) node[left] {$3$};
        \draw[thick] (.2,4)--(-.2,4) node[left] {$4$};
        \filldraw[opacity = .4] (0,0)--(0,2)--(2,0)--cycle;
        \filldraw[black] (0,2) circle (3pt) node[above right]{};
        \filldraw[black] (1,1) circle (3pt) node[above right]{};
        \filldraw[black] (2,0) circle (3pt) node[above right]{};
        \filldraw[black] (0,3) circle (3pt) node[above right]{};
        \filldraw[black] (1,2) circle (3pt) node[above right]{};
        \filldraw[black] (2,1) circle (3pt) node[above right]{};
        \filldraw[black] (3,0) circle (3pt) node[above right]{};
        \filldraw[black] (1,3) circle (3pt) node[above right]{};
        \filldraw[black] (2,2) circle (3pt) node[above right]{};
    \end{tikzpicture}
    \caption{$S\in\mathcal{P}(1,2)$}
    \end{subfigure}%
    \hspace{.03\textwidth}
        \begin{subfigure}{.35\textwidth}
        \begin{tikzpicture}[scale = .7]
        \draw[<->, thick] (5,0)--(0,0)--(0,5);
        \draw[thick] (1,.2)--(1,-.2) node[below] {$1$};
        \draw[thick] (2,.2)--(2,-.2) node[below] {$2$};
        \draw[thick] (3,.2)--(3,-.2) node[below] {$3$};
        \draw[thick] (4,.2)--(4,-.2) node[below] {$4$};
        \draw[thick] (.2,1)--(-.2,1) node[left] {$1$};
        \draw[thick] (.2,2)--(-.2,2) node[left] {$2$};
        \draw[thick] (.2,3)--(-.2,3) node[left] {$3$};
        \draw[thick] (.2,4)--(-.2,4) node[left] {$4$};
        \filldraw[opacity = .4] (0,0)--(0,3)--(3,0)--cycle;
        \filldraw[black] (1,2) circle (3pt) node[above right]{};
        \filldraw[black] (2,1) circle (3pt) node[above right]{};
        \filldraw[black] (3,0) circle (3pt) node[above right]{};
        \filldraw[black] (1,3) circle (3pt) node[above right]{};
        \filldraw[black] (2,2) circle (3pt) node[above right]{};
        \filldraw[black] (3,1) circle (3pt) node[above right]{};
        \filldraw[black] (4,0) circle (3pt) node[above right]{};
        \filldraw[black] (2,3) circle (3pt) node[above right]{};
        \filldraw[black] (3,2) circle (3pt) node[above right]{};
        \draw[thick,dashed, ->] (2,3) to [out=150,in=30] (0,3);
    \end{tikzpicture}
    \caption{$S'\in\mathcal{P}(1,3)$}
    \end{subfigure}
    \end{figure}
\end{example}

\subsection{Finiteness properties of the section ring}

Here we record several algebraic consequences of our combinatorial analysis in the previous section. 

\begin{theorem}\label{thm:infinite chambers fg}
    For any integer $k\geq 0$, the bigraded section ring
    \[ \bigoplus_{d_1,d_2\geq 0} H^0\left(\Hilb^n\left(\Bl_{\mathbf{0}}\left(\C^2\right)\right),\CO\left(d_1 k+d_2(k+1),d_1+d_2\right)\right) \simeq \bigoplus_{d_1,d_2\geq 0} A^{d_1+d_2}_{\geq d_1 k+d_2(k+1)}(n) \]
    is generated in bidegrees $(d_1,d_2) = (1,0)$ and $(0,1)$.
\end{theorem}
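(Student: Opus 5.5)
We will deduce the statement from Corollary~\ref{cor:Minkowski decomp} together with Theorem~\ref{thm:HS lead terms}, via a standard trailing-term (Gröbner-type) division argument. Fix $(d_1,d_2)$ with $d_1+d_2\geq 1$ and set $k' = d_1k+d_2(k+1)$, $m'=d_1+d_2$. Let $R_{d_1,d_2}\subseteq A^{m'}_{\geq k'}$ denote the span of all products $f_1\cdots f_{d_1}g_1\cdots g_{d_2}$ with $f_i\in A^1_{\geq k}$ and $g_j\in A^1_{\geq k+1}$. Here ``generated'' means as an algebra over $A^0$, so we also allow multiplication by elements of $A^0$. Clearly $A^0\cdot R_{d_1,d_2}\subseteq A^{m'}_{\geq k'}$. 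We must prove the reverse inclusion.

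\textbf{Step 1 (all trailing terms are realized).} Every trailing exponent of $A^{m'}_{\geq k'}$ is already a trailing exponent of $R_{d_1,d_2}$. By Theorem~\ref{thm:HS lead terms}, the trailing exponents of $A^{m'}_{\geq k'}$ are exactly $\mathcal{P}(k',m')$. By Corollary~\ref{cor:Minkowski decomp} with $q=k$, any such point decomposes as a sum of $d_1$ points of $\mathcal{P}(k,1)$ and $d_2$ points of $\mathcal{P}(k+1,1)$. Again by Theorem~\ref{thm:HS lead terms}, each summand is the trailing exponent of some $f_i\in A^1_{\geq k}$ or $g_j\in A^1_{\geq k+1}$. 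Since trailing terms are multiplicative for a monomial order, the product has the prescribed trailing exponent.

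\textbf{Step 2 (division).} Take $F\in A^{m'}_{\geq k'}$. Choose a product $P$ with the same trailing exponent as $F$ and subtract a scalar multiple of it, which strictly increases the trailing term. This is where an infinite ascent could occur, so termination has to be arranged. Everything is bihomogeneous in $(\deg_x,\deg_y)$. The defining conditions of $A^{m}_{\geq k}$ and the $\Delta_S$ basis are compatible with this bigrading, so we may assume $F$ is bihomogeneous. We can also choose each $f_i,g_j$ bihomogeneous, by taking the bihomogeneous component containing the trailing term; this component still lies in $A^1_{\geq k}$ and keeps the same trailing term. Then $P$ and $F-cP$ are bihomogeneous of the same bidegree. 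Each bidegree component of $\C[x_1,y_1,\dots,x_n,y_n]$ is finite dimensional, so the set of monomials is finite and the trailing term can increase only finitely many times. The process therefore ends at $0$, which shows $F\in R_{d_1,d_2}$.

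At this point we do not even need the $A^0$-coefficients; the degree $(0,0)$ piece is $A^0$ itself. Hence the bigraded ring is generated over $A^0$ in bidegrees $(1,0)$ and $(0,1)$.

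\textbf{Main obstacle.} The substantive content is the Minkowski decomposition, which is already established in Proposition~\ref{prop:decomp}. The remaining delicate points are the multiplicativity of trailing terms under the lex order and the finite-dimensionality argument that guarantees the division terminates. The latter needs the bigrading compatibility of $A^m_{\geq k}$, which is immediate from its definition as a condition on individual terms.
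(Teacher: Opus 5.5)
Your proof is correct and follows the paper's argument: the Minkowski decomposition of Corollary~\ref{cor:Minkowski decomp} with $q=k$, multiplicativity of trailing terms, and Theorem~\ref{thm:HS lead terms} together give $\left(A^1_{\geq k}\right)^{d_1}\left(A^1_{\geq k+1}\right)^{d_2}=A^{d_1+d_2}_{\geq d_1k+d_2(k+1)}$. Your only addition is the explicit termination argument for the trailing-term reduction, using finite-dimensional bihomogeneous components; the paper leaves this step implicit in its final ``which implies''.
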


\begin{proof}
    There are $\C$-linear bases of $A^1_{\geq k}$ and $A^1_{\geq k+1}$ corresponding to the determinants $\Delta_S$ for $S=\{(a_1,b_1),\dots,(a_n,b_n)\}$ in $\mathcal{P}(k,1)$ and $\mathcal{P}(k+1,1)$ respectively. By Corollary \ref{cor:Minkowski decomp}, we have the equality of Minkowski sums
    \[ \mathcal{P}(d_1 k+d_2(k+1),d_1+d_2) = \underbrace{\mathcal{P}(k,1)+\cdots+\mathcal{P}(k,1)}_{d_1 \text{ terms}}+\underbrace{\mathcal{P}(k+1,1)+\cdots \mathcal{P}(k+1,1)}_{d_2 \text{ terms}} \]
    Taking products of the corresponding basis elements of $A^1_{\geq k}$ and $A^1_{\geq k+1}$, this shows that every element of $\mathcal{P}(d_1 k+d_2(k+1),d_1+d_2)$ can be realized as the trailing term of some polynomial in $\left( A^1_{\geq k}\right)^{d_1} \cdot \left( A^1_{\geq k+1}\right)^{d_2} \subseteq A^{d_1+d_2}_{\geq d_1 k+d_2(k+1)}$. By Theorem \ref{thm:HS lead terms}, these are all the trailing terms of polynomials in $A^{d_1+d_2}_{\geq d_1 k+d_2(k+1)}$, which implies 
    \[ \left( A^1_{\geq k}\right)^{d_1} \cdot \left( A^1_{\geq k+1}\right)^{d_2} = A^{d_1+d_2}_{\geq d_1 k+d_2(k+1)} \]
    as desired.
\end{proof}

Theorem \ref{thm:infinite chambers fg} refers to the subring of the total section ring consisting of line bundles in the cone generated by $\CO(k,1)$ and $\CO(k+1,1)$. This gives a decomposition of $\R^2_{\geq 0}$ into an infinite collection of chambers as shown in Figure \ref{fig:infinite chambers fg}.

\begin{figure}[h]
    \centering
    \begin{tikzpicture}
        \draw[<-,thick] (8,0)--(0,0);
        \draw[->,thick] (0,0)--(0,3);
        \draw[->,thick] (0,0) -- (3,3);
        \draw[->,thick] (0,0) -- (6,3);    
        \draw[->,thick] (0,0) -- (8,2.67);
        \draw[->,thick] (0,0) -- (8,2); 
        \draw[->,thick] (0,0) -- (8,1.6);
        \draw[->,thick] (0,0) -- (8,1.34);
        \draw[->,thick] (0,0) -- (8,1.14); 
        \node at (6,.5) {$\vdots$};
        \node[left] at (0,2.5) {$m$};
        \node[below] at (7.5,0) {$k$};
        \foreach \x in {0,1,...,7}
            \foreach \y in {0,1,...,2}{
                \draw (\x,\y) circle (3pt);
            };
        \filldraw (0,0) circle (3pt);
        \filldraw (0,1) circle (3pt);
        \filldraw (1,1) circle (3pt);
        \filldraw (2,1) circle (3pt);
        \filldraw (3,1) circle (3pt);
        \filldraw (4,1) circle (3pt);
        \filldraw (5,1) circle (3pt);
        \filldraw (6,1) circle (3pt);
        \filldraw (7,1) circle (3pt);
    \end{tikzpicture}
    \caption{The chambers on which Theorem \ref{thm:infinite chambers fg} applies.}
    \label{fig:infinite chambers fg}
\end{figure}

As a consequence, we obtain the following finite-generation for singly-graded subrings.

\begin{corollary}\label{cor:fg on rays}
    For any integers $k,m\geq 0$, the singly-graded section ring 
    \[ \bigoplus_{d\geq 0} H^0\left(\Hilb^n\left(\Bl_{\mathbf{0}}\left(\C^2\right)\right),\CO(dk,dm)\right) \simeq \bigoplus_{d\geq 0} A^{dm}_{\geq dk}(n) \]
    is generated in degree $d = 1$.
\end{corollary}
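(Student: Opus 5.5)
The plan is to reduce everything to the Minkowski-sum decompositions of the sets $\mathcal{P}(k,m)$ and then use the trailing-term argument from the proof of Theorem \ref{thm:infinite chambers fg}. The key claim is that for every $d\geq 1$
\[ \mathcal{P}(dk,dm) = \underbrace{\mathcal{P}(k,m)+\cdots+\mathcal{P}(k,m)}_{d \text{ terms}}. \]
Suppose this holds. Fix a basis of $A^m_{\geq k}$ containing, for each point of $\mathcal{P}(k,m)$, a polynomial with that trailing term; such polynomials exist by Theorem \ref{thm:HS lead terms}. Products of $d$ of these polynomials lie in $\left(A^m_{\geq k}\right)^d\subseteq A^{dm}_{\geq dk}$. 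Their trailing terms are the sums of the individual trailing terms, so they realize every element of $\mathcal{P}(dk,dm)$. By Theorem \ref{thm:HS lead terms} again, these are all the trailing terms of $A^{dm}_{\geq dk}$. Both spaces are graded by total degree with finite-dimensional pieces, and the inclusion $\left(A^m_{\geq k}\right)^d\subseteq A^{dm}_{\geq dk}$ induces equality of trailing-term sets. Hence the standard Gröbner-type argument gives $\left(A^m_{\geq k}\right)^d = A^{dm}_{\geq dk}$, which is generation in degree $1$. The inclusion $\supseteq$ in the key claim is the easy additivity already noted in the proof of Corollary \ref{cor:Minkowski decomp}.

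\emph{Case $m>0$.} Write $k=qm+r$ with $0\leq r<m$. Then $dk = q\cdot d(m-r) + (q+1)\cdot dr$ and $dm = d(m-r)+dr$. So Corollary \ref{cor:Minkowski decomp}, applied with $m_1=d(m-r)$ and $m_2=dr$ (not both zero), gives that $\mathcal{P}(dk,dm)$ is the sum of $d(m-r)$ copies of $\mathcal{P}(q,1)$ and $dr$ copies of $\mathcal{P}(q+1,1)$. The same corollary with $d=1$ says that $\mathcal{P}(k,m)$ is the sum of $m-r$ copies of $\mathcal{P}(q,1)$ and $r$ copies of $\mathcal{P}(q+1,1)$. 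Regrouping the summands into $d$ blocks of this shape gives the key claim.

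\emph{Case $m=0$.} This is where Corollary \ref{cor:Minkowski decomp} does not apply, and it is the main step requiring new work; the case $k=0$ is trivial. Here $\mathcal{P}(dk,0)$ consists of weakly lex-increasing tuples of points with $a_i+b_i\geq dk$. I would mimic the proofs of Lemma \ref{lem:k=0 decomp} and Proposition \ref{prop:decomp} with $d$ in the role of $m$. Write $a_i = dq_i+r_i$, and define the $a^{(s)}_i$ as in Lemma \ref{lem:div with remainder}; these are weakly increasing in $i$ for each $s$ and sum to $a_i$. Set
\[ b^{(s)}_i=\max\{k-a^{(s)}_i,0\}, \qquad s\neq r_i, \]
and in the component $s=r_i$ add the excess $b_i-\max\{dk-a_i,0\}\geq 0$. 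The identity \eqref{eq:differences div with rem}, with $(k,m)$ replaced by $(dk,d)$ and with quotient $k$ and remainder $0$, shows that the $b^{(s)}_i$ sum to $b_i$. It also shows that each $(a^{(s)}_i,b^{(s)}_i)$ satisfies $a^{(s)}_i+b^{(s)}_i\geq k$.

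It remains to check weak lex order within each component. Suppose $a^{(s)}_i=a^{(s)}_{i+1}$. The base values $\max\{k-a^{(s)}_i,0\}$ then agree, so the only danger is the excess sitting in position $i$ but not in position $i+1$, i.e.\ $s=r_i$ with $s \neq r_{i+1}$. The argument in the proof of Lemma \ref{lem:k=0 decomp} shows that $s=r_i$ together with $a^{(s)}_i=a^{(s)}_{i+1}$ forces $a_i=a_{i+1}$, hence $r_{i+1}=r_i$. In that situation weak lex order of the original tuple gives $b_i\leq b_{i+1}$. The base terms $\max\{dk-a_i,0\}$ coincide, so the excesses satisfy the same inequality, and the component stays weakly ordered. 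This case analysis is the only point I expect to need care.
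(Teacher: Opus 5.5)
Your proof is correct. For $m>0$ it follows the paper's route. The paper deduces this corollary directly from Theorem~\ref{thm:infinite chambers fg}: with $k=qm+r$ one has
\[
A^{dm}_{\geq dk}=\left(A^1_{\geq q}\right)^{d(m-r)}\left(A^1_{\geq q+1}\right)^{dr}=\left(A^m_{\geq k}\right)^d .
\]
Your regrouping of the Minkowski summands from Corollary~\ref{cor:Minkowski decomp}, followed by the trailing-term and dimension count, is the same computation carried out at the level of the sets $\mathcal{P}$.

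Where you go beyond the paper is the ray $m=0$, $k>0$. Theorem~\ref{thm:infinite chambers fg} only concerns cones spanned by $\CO(q,1),\CO(q+1,1)$, so it says nothing about $\CO(k,0)$. The only other candidate is the $d_1=0$ edge of Theorem~\ref{thm:finite chamber}, but that edge would require $\mathcal{P}(d,0)=d\,\mathcal{P}(1,0)$. Lemma~\ref{lem: m=1 decomp} is an $m=1$ statement and does not provide this. So the paper's one-line ``as a consequence'' really covers only $m>0$.

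Your $d$-fold division-with-remainder decomposition $\mathcal{P}(dk,0)=d\,\mathcal{P}(k,0)$ fills exactly this gap, and each step checks out:
\begin{itemize}
\item Identity \eqref{eq:differences div with rem} with $k^{(s)}=k$ for all $s$ gives both $\sum_s b^{(s)}_i=b_i$ and $a^{(s)}_i+b^{(s)}_i\geq k$.
\item The map $a_i\mapsto a^{(s)}_i$ is monotone.
\item The only possible failure of weak lexicographic order is $s=r_i\neq r_{i+1}$ with $a^{(s)}_i=a^{(s)}_{i+1}$. This is excluded by the argument from Lemma~\ref{lem:k=0 decomp}, which forces $a_i=a_{i+1}$ and hence $r_i=r_{i+1}$.
\item In that remaining situation the two excesses share the same base term $\max\{dk-a_i,0\}$, so $b_i\leq b_{i+1}$ finishes it.
\end{itemize}
The closing Gr\"obner-type step is also justified as you say. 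Both $\left(A^m_{\geq k}\right)^d$ and $A^{dm}_{\geq dk}$ are spanned by homogeneous elements with finite-dimensional graded pieces, so equality of trailing-exponent sets gives equality of dimensions degree by degree.

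Net effect: your argument proves the corollary as stated for all $k,m\geq 0$. The paper's deduction handles only the rays with $m>0$.
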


 Similarly to Theorem \ref{thm:infinite chambers fg}, Lemma \ref{lem: m=1 decomp} implies the following statement.

\begin{theorem}\label{thm:finite chamber}
    The bigraded section ring
       \[ \bigoplus_{d_1,d_2\geq 0} H^0\left(\Hilb^n\left(\Bl_{\mathbf{0}}\left(\C^2\right)\right),\CO\left(d_1(n-1)+d_2,d_1\right)\right) \simeq \bigoplus_{d_1,d_2\geq 0} A^{d_1}_{\geq d_1(n-1)+d_2}(n) \]
    is generated in bidegrees $(d_1,d_2) = (1,0)$ and $(0,1)$.
\end{theorem}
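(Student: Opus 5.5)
The argument runs parallel to the proof of Theorem \ref{thm:infinite chambers fg}, using trailing terms. We fix $d_1,d_2\geq 0$ and aim to show
\[ \left(A^1_{\geq n-1}\right)^{d_1}\cdot\left(A^0_{\geq 1}\right)^{d_2} = A^{d_1}_{\geq d_1(n-1)+d_2}. \]
The inclusion $\subseteq$ holds because products of polynomials whose terms satisfy $a_i+b_i\geq k$, respectively $\geq k'$, have terms with $a_i+b_i\geq k+k'$. For the reverse inclusion, Theorem \ref{thm:HS lead terms} says the set of trailing exponents of $A^{d_1}_{\geq d_1(n-1)+d_2}$ is exactly $\mathcal{P}(d_1(n-1)+d_2,d_1)$. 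Suppose we can show that every point of this set is a sum of $d_1$ points of $\mathcal{P}(n-1,1)$ and $d_2$ points of $\mathcal{P}(1,0)$. Then multiplying the corresponding polynomials (the $\Delta_S$ for $\mathcal{P}(n-1,1)$, and symmetric polynomials with trailing terms in $\mathcal{P}(1,0)$) realizes each trailing exponent in the subspace on the left, since trailing terms multiply under a term order. A triangular reduction (subtract a suitable element of the product subspace to raise the trailing term; this terminates degree by degree, working in each finite-dimensional bigraded piece) then gives equality, exactly as in the proof of Theorem \ref{thm:infinite chambers fg}.

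The combinatorial Minkowski decomposition splits into two cases.

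\emph{Case $d_1=0$.} We need $\mathcal{P}(d_2,0)=d_2\,\mathcal{P}(1,0)$. A point of $\mathcal{P}(d_2,0)$ is a lex-weakly-increasing tuple with $a_i+b_i\geq d_2$. We peel off one point of $\mathcal{P}(1,0)$ at a time: subtract $(0,1)$ from the entries with $b_i>0$, and $(1,0)$ from those with $b_i=0$ (these have $a_i\geq d_2\geq 1$). We must check that lex weak order is preserved. This holds: at the boundary between a $b=0$ entry and a $b>0$ entry, the $a$-values are strictly ordered, so subtracting $1$ from $a$ on the later entries keeps weak order. The resulting point lies in $\mathcal{P}(d_2-1,0)$, and we induct.

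\emph{Case $d_1\geq 1$.} We induct on $d_2$ using Lemma \ref{lem: m=1 decomp}, generalized from $m=1$ to $m=d_1$. We want to show
\[ \mathcal{P}(K,d_1)=\mathcal{P}(K-1,d_1)+\mathcal{P}(1,0) \quad\text{for } K>d_1(n-1). \]
Once this holds, we iterate down to $\mathcal{P}(d_1(n-1),d_1)$, and Corollary \ref{cor:Minkowski decomp} with $q=n-1$, $m_1=d_1$, $m_2=0$ writes that set as $d_1\,\mathcal{P}(n-1,1)$.

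The main obstacle is the generalized peeling step. Given a point of $\mathcal{P}(K,d_1)$, we would look for a value $a\leq K$ such that no gap $a_{j+1}-a_j$ "straddles" $a$ in a way that breaks conditions (2) and (3) of Definition \ref{def:P(k,m)}. We then subtract $(0,1)$ from the points with $a_i<a$ and $(1,0)$ from those with $a_i\geq a$.

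Condition (3) needs care: shifting later $a$'s down by $1$ can increase the terms $\max\{m-(a_j-a_i),0\}$ across the cut. To handle this, I would choose $a$ so that every $a_i<a$ satisfies $a_i\leq a-1-d_1$, i.e. a gap of length more than $d_1$ in the $a$-values. Such a gap exists within $[0,K]$ by pigeonhole, since $K+1>d_1(n-1)+1$ and there are only $n$ values.

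If this fails, the fallback is to reduce instead to $d_1=1$ via Proposition \ref{prop:decomp}. That proposition splits a point of $\mathcal{P}(d_1(n-1)+d_2,d_1)$ into $d_1$ pieces in $\mathcal{P}(q,1)$ and $\mathcal{P}(q+1,1)$ with $q\geq n-1$. We then apply Lemma \ref{lem: m=1 decomp} repeatedly to each piece, pulling off copies of $\mathcal{P}(1,0)$ until every piece lies in $\mathcal{P}(n-1,1)$. The total number of $\mathcal{P}(1,0)$ summands produced is $\sum_s (k^{(s)}-(n-1)) = d_2$, as required. This fallback seems cleaner, and I would adopt it as the primary route.
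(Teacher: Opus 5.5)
\textbf{Case $d_1\geq 1$.} This part is correct, and it is what the paper's one-line proof leaves implicit (``similarly to Theorem \ref{thm:infinite chambers fg}, Lemma \ref{lem: m=1 decomp} implies\dots''). Your steps are:
\begin{itemize}
\item split with Proposition \ref{prop:decomp} into $d_1$ pieces in $\mathcal{P}(q,1)$ and $\mathcal{P}(q+1,1)$, with $q\geq n-1$;
\item strip copies of $\mathcal{P}(1,0)$ from each piece using Lemma \ref{lem: m=1 decomp};
\item check the count $\sum_s (k^{(s)}-(n-1))=d_2$, which is right.
\end{itemize}
The trailing-term and triangularity framework is the paper's. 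You can drop the first ``gap of length $>d_1$'' sketch, since you abandon it anyway.

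\textbf{Case $d_1=0$.} You are right that this case needs a separate argument, which the paper does not spell out. However, your peeling argument for it is wrong. Both the peeled-off tuple and the remainder must be lex weakly increasing in the given order, and your rule (subtract $(0,1)$ where $b_i>0$, and $(1,0)$ where $b_i=0$) can break either one.
\begin{itemize}
\item For $((0,3),(2,0),(2,5))\in\mathcal{P}(2,0)$, you peel off $((0,1),(1,0),(0,1))$. This is not in $\mathcal{P}(1,0)$, because $(1,0)>(0,1)$. Indeed, no element of $A^0_{\geq 1}$ has trailing term $y_1x_2y_3$. Your argument tacitly assumes the $b=0$ entries all come after the $b>0$ entries.
\item For $((1,2),(2,0))\in\mathcal{P}(2,0)$, the remainder is $((1,1),(1,0))$, which is not increasing. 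An $a$-gap of exactly $1$ collapses to equality, and then the $b$-coordinates decide. So ``the $a$-values are strictly ordered'' is not sufficient.
\end{itemize}

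\textbf{A correct peeling.} The identity $\mathcal{P}(k+1,0)=\mathcal{P}(k,0)+\mathcal{P}(1,0)$ does hold for every $k\geq 0$, with a different peeling. Given a point of $\mathcal{P}(k+1,0)$:
\begin{itemize}
\item Let $N=|\{i : a_i=0\}|$. Set $B=b_{N+1}$ if $N<n$ and $a_{N+1}=1$, and $B=+\infty$ otherwise. Note $B\geq k$.
\item Peel off $(0,\max\{1,b_i-B\})$ for $i\leq N$, and $(1,0)$ for $i>N$. This tuple is weakly increasing, and all its entries satisfy $a+b\geq 1$.
\item The remainder has entries $(0,\min\{b_i-1,B\})$ for $i\leq N$. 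These are weakly increasing and satisfy $b\geq k$, since $b_i\geq k+1$. When $a_{N+1}=1$ they are also at most $(0,B)=(a_{N+1}-1,b_{N+1})$.
\item The remaining entries $(a_i-1,b_i)$ for $i>N$ keep their order and satisfy $a+b\geq k$.
\end{itemize}
So the remainder lies in $\mathcal{P}(k,0)$. For example, $((1,2),(2,0))=((1,0),(1,0))+((0,2),(1,0))$ and $((0,3),(2,0),(2,5))=((0,1),(1,0),(1,0))+((0,2),(1,0),(1,5))$. Iterating gives $A^0_{\geq d_2}=A^0\cdot(A^0_{\geq 1})^{d_2}$, which completes the $d_1=0$ case.
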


Theorem \ref{thm:finite chamber} refers to the subring of the total section ring consisting of line bundles in the cone generated by $\CO(n-1,1)$ and $\CO(1,0)$, and subsumes Theorem \ref{thm:infinite chambers fg} in the range $k\geq n-1$. When combined, these results give a finite list of generating bidegrees for the total section ring.

\begin{corollary}\label{cor:finite generation degrees}
    The bigraded section ring 
    \[ \bigoplus_{k,m\geq 0} H^0(\Hilb^n(\Bl_{\mathbf{0}}(\C^2)),\CO(k,m)) \simeq \bigoplus_{k,m\geq 0} A^{m}_{\geq k}(n) \]
    is generated in bidegrees $(k,m)= (0,1),(1,1),\dots,(n-1,1)$ and $(1,0)$, and this list of bidegrees is minimal.
\end{corollary}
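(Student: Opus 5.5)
The plan is to show two things: every bidegree $(k,m)$ with $k,m\geq 0$ decomposes into the listed generating bidegrees in a way compatible with multiplication of sections, and none of the listed bidegrees can be dropped.

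\emph{Generation.} Fix $(k,m)$ with $k,m\geq 0$, not both zero. If $m=0$, then $A^0_{\geq k}$ should be generated by $A^0_{\geq 1}$. I would prove the combinatorial identity $\mathcal{P}(k,0)=\mathcal{P}(1,0)+\cdots+\mathcal{P}(1,0)$ ($k$ terms). The inclusion $\supseteq$ is the general additivity $\mathcal{P}(k,m)+\mathcal{P}(k',m')\subseteq\mathcal{P}(k+k',m+m')$. For $\subseteq$, peel off one copy at a time: subtract $(1,0)$ from points with $a_i\geq 1$ and $(0,1)$ from points with $a_i=0$. This keeps $a_i+b_i\geq k-1$ and, since the $(0,1)$'s come first, preserves weak lexicographic order. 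Theorem \ref{thm:HS lead terms} then turns this identity into equality of spaces, exactly as in the proof of Theorem \ref{thm:infinite chambers fg}.

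If $m>0$ and $k\leq (n-1)m$, write $k=qm+r$ with $0\leq r<m$, so $q\leq n-1$, and $q+1\leq n-1$ whenever $r>0$. Proposition \ref{prop:decomp} (equivalently Theorem \ref{thm:infinite chambers fg} with $d_1=m-r$, $d_2=r$) then gives $A^m_{\geq k}=(A^1_{\geq q})^{m-r}(A^1_{\geq q+1})^r$, which is a product of the listed bidegrees. If $k>(n-1)m$, write $k=(n-1)m+d_2$ with $d_2>0$. Theorem \ref{thm:finite chamber} gives $A^m_{\geq k}=(A^1_{\geq n-1})^m(A^0_{\geq 1})^{d_2}$. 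Together these cases cover every $(k,m)$.

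\emph{Minimality.} I must show that no listed bidegree lies in the subalgebra generated by the others. Decompositions of a bidegree into nonzero bidegrees are constrained.
\begin{itemize}
\item For $(1,0)$: the only other bidegree with $m=0$ is $(0,0)$, so $A^0_{\geq 1}$ cannot be generated by the others. Concretely, $x_1\cdots x_n\notin A^0$.
\item For $(k,1)$ with $0\leq k\leq n-1$: any decomposition must have the form $(k,1)=(k',1)+j(1,0)$ with $k'<k$ (other summands have $m=0$ and $k\geq 0$). The products from such decompositions lie in $A^1_{\geq k'}(A^0_{\geq 1})^{j}\subseteq A^1_{\geq k-1}A^0_{\geq 1}$. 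Here I use that $A^0_{\geq 1}\cdot A^0_{\geq 1}\subseteq A^0_{\geq 2}\subseteq A^0_{\geq 1}$, and I would use the $m=0$ generation above to write $(A^0_{\geq 1})^j=A^0_{\geq j}$. Hence the subalgebra generated by the other bidegrees meets bidegree $(k,1)$ inside $A^1_{\geq k-1}A^0_{\geq 1}$.
\item For $k=0$: $(0,1)$ cannot be decomposed at all, so it is trivially needed.
\item For $1\leq k\leq n-1$: Lemma \ref{lem:strict inclusions}, applied to $k-1<n-1$, gives $A^1_{\geq k-1}A^0_{\geq 1}\subsetneq A^1_{\geq k}$, so $(k,1)$ is needed.
\end{itemize}

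The main obstacle is this minimality bookkeeping. I must make sure that "generated as an algebra over $A^0$" does not let the $(0,0)$-degree coefficients sneak in extra elements. They cannot, since multiplying by $A^0$ preserves $A^1_{\geq k-1}A^0_{\geq 1}$. I must also make sure that the total-degree argument of Lemma \ref{lem:strict inclusions} applies to the full subspace, which it does because it compares minimal total degrees.
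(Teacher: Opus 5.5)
Your treatment of $m>0$ and of minimality matches the paper's proof, with the bookkeeping spelled out: Theorems \ref{thm:infinite chambers fg} and \ref{thm:finite chamber} give generation, and Lemma \ref{lem:strict inclusions} gives minimality. The $m=0$ step, which you prove separately, has a genuine gap.

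Your peeling rule does not preserve weak lexicographic order. For $n=2$, $k=2$, the tuple $((0,5),(1,3))\in\mathcal{P}(2,0)$ becomes $((0,4),(0,3))\notin\mathcal{P}(1,0)$. Choosing the switch point from $(0,1)$ to $(1,0)$ more cleverly does not fix this. Take $n=3$ and $((0,5),(1,3),(2,0))\in\mathcal{P}(2,0)$. None of the four tuples of the form $((0,1),\dots,(0,1),(1,0),\dots,(1,0))$ can be subtracted to leave an element of $\mathcal{P}(1,0)$. Yet $((0,1),(0,3),(1,0))+((0,4),(1,0),(1,0))$ is a valid decomposition. So the summands must in general be non-minimal, and a one-step-at-a-time peel by the points $(0,1),(1,0)$ cannot work.

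A correct argument is the division with remainder of Lemma \ref{lem:div with remainder}, now with $k$ as the modulus:
\begin{itemize}
\item Write $a_i=kq_i+r_i$ and define $a_i^{(s)}$ for $0\le s<k$ as in that lemma.
\item Set $b_i^{(s)}=\max\{1-a_i^{(s)},0\}$, and add $b_i-\max\{k-a_i,0\}\geq 0$ to the slot $s=r_i$.
\item Equation \eqref{eq:differences div with rem}, with all $k^{(s)}=1$, gives $\sum_s b_i^{(s)}=b_i$.
\item The argument in Lemma \ref{lem:k=0 decomp} shows that $a_i^{(r_i)}=a_{i+1}^{(r_i)}$ forces $a_i=a_{i+1}$. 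This gives weak monotonicity within each summand.
\end{itemize}
Hence $\mathcal{P}(k,0)$ is the $k$-fold Minkowski sum of $\mathcal{P}(1,0)$. The paper handles $m=0$ only by quoting Theorem \ref{thm:finite chamber} at $d_1=0$. Lemma \ref{lem: m=1 decomp} is an $m=1$ statement and does not literally cover that case, so you were right that it needs its own argument.

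Your minimality argument does not depend on the flawed step. It only needs the trivial inclusion $(A^0_{\geq 1})^{j-1}\subseteq A^0_{\geq j-1}$, not the equality you quote. One slip there: $x_1\cdots x_n$ is symmetric, so it does lie in $A^0$. The correct reason $(1,0)$ is needed is the one you give first: bidegrees add, and $(1,0)$ has no decomposition into nonnegative bidegrees other than $(1,0)+(0,0)+\cdots$.
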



\begin{proof}
    That this ring is generated by its elements of the listed bidegrees follows from Theorems \ref{thm:infinite chambers fg} and \ref{thm:finite chamber}, and the minimality of this list follows from Lemma \ref{lem:strict inclusions}.
\end{proof}

\section{The birational models of $\Hilb^n(\Bl_{\mathbf{0}}(\C^2))$}\label{sec:proof of main thm}

\subsection{Proof of main theorems}

Recall that we have the map
\[ \iota_k: \Hilb^n(\C^2\setminus\mathbf{0})\to \Hilb^{n+k(k+1)/2}(\C^2) \]
defined by $I\mapsto (x,y)^k \cap I$, whose image closure is $X_k(n)$ by definition. We consider 
\[ \Hilb^n(\C^2\setminus\mathbf{0}) \subseteq \Hilb^n(\C^2) \]
as an open subset whose complement has codimension $2$. This implies that the restriction maps on Picard groups and global sections of line bundles are isomorphisms. We therefore also write $\CO(m)$ for the restriction of this line bundle from $\Hilb^n(\C^2)$ to $\Hilb^n(\C^2\setminus\mathbf{0})$, and identify the sections with $A^m(n)$.

\begin{proposition}\label{prop:image of pullback}
    Pullback by $\iota_k$ defines a ring homomorphism
    \begin{equation}\label{eq:ringmap}
         \iota_k^*: \bigoplus_{m\geq 0} A^m\left(n+\frac{k(k+1)}{2}\right) \to \bigoplus_{m\geq 0} A^m(n).
    \end{equation}
    whose image is
    \[ \bigoplus_{m\geq 0} A^m_{\geq mk}(n). \]
\end{proposition}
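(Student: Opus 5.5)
We first make $\iota_k^*$ concrete on the open set $U\subseteq\Hilb^n(\C^2\setminus\mathbf{0})$ of reduced subschemes $Z=\{p_1,\dots,p_n\}$, $p_i=(x_i,y_i)$. There $\iota_k(Z)=Z\sqcup Z_k$, where $Z_k$ is the fat point defined by $(x,y)^k$, whose monomial basis is $\{(a,b):a+b<k\}$. A section of $\CO(m)$ on $\Hilb^{N}(\C^2)$, with $N=n+k(k+1)/2$, is a polynomial $f\in A^m(N)$ in the variables $(x_1,y_1,\dots,x_N,y_N)$.

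The main computation is the case of the alternants $\Delta_T$, $|T|=N$. Evaluating $\Delta_T$ on the limiting configuration $\iota_k(Z)$ amounts to taking the Wronskian-type limit as the last $k(k+1)/2$ points collide at $\mathbf{0}$. Concretely, one divides by the fixed Vandermonde-type factor of the collision, which is an everywhere-nonvanishing trivialization of the line bundle along the fat point. This gives
\[ \iota_k^*\Delta_T=\det\big(D_{(c,d)}(x^ay^b)\big|_{\mathbf{0}}\ ;\ x_i^ay_i^b\big)_{(a,b)\in T}. \]
The rows are indexed by the functionals $\partial_x^c\partial_y^d|_{\mathbf{0}}$ for $c+d<k$ and by evaluation at the $p_i$. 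Expanding this determinant by Laplace along the derivative rows, only those $T$ containing the staircase $\{a+b<k\}$ contribute. We obtain $\iota_k^*\Delta_T=\pm\Delta_{T\setminus\{a+b<k\}}(n)$ if $T\supseteq\{a+b<k\}$, and $0$ otherwise. The surviving $S=T\setminus\{a+b<k\}$ has all points with $a+b\geq k$, and every such $S$ arises.

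Hence $\iota_k^*(A^1(N))=A^1_{\geq k}(n)$. Similarly, $\iota_k^*(A^0(N))\supseteq A^0(n)$: a power sum $\sum_{i=1}^{N}x_i^ay_i^b$ pulls back to $\sum_{i=1}^n x_i^ay_i^b$ plus a constant coming from the fat point's trace. Its image is therefore all of the relevant global functions on $\Hilb^n(\C^2\setminus\mathbf{0})$, namely $A^0(n)$. Here we use that $A^0(n)$ is generated by polarized power sums, which holds in characteristic zero. The identity is first verified on $U$, which is dense, and extends because both sides are polynomials.

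Since $\iota_k^*$ is a ring homomorphism and $\bigoplus_m A^m(N)$ is generated over $A^0(N)$ in degree $1$ (Haiman; the case $k=0$ of Corollary \ref{cor:fg on rays}), the image is the subalgebra generated over $A^0(n)$ by $A^1_{\geq k}(n)$. This equals $\bigoplus_m (A^1_{\geq k})^m\cdot A^0$. Corollary \ref{cor:fg on rays}, applied to the ray through $(k,1)$, identifies this with $\bigoplus_m A^m_{\geq mk}(n)$. The identity holds because $A^m_{\geq mk}$ is generated by products of degree-one elements over $A^0_{\geq 0}=A^0$.

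The main obstacle is the precise degeneration step: showing that the section of $\CO(1)$ given by $\Delta_T$ restricts along $\iota_k$ to exactly the bordered determinant above, up to a nonzero constant independent of $Z$. This requires identifying the fiber of $\CO(1)=\det(\mathcal{O}/I)$ at $\iota_k(Z)$ with $\det(\mathcal{O}/I_Z)\otimes\det(\C[x,y]/(x,y)^k)$. We would do this directly in the tautological-bundle language, as $\Delta_T(I)=\det$ of the map $\mathrm{span}(x^ay^b)_{T}\to\C[x,y]/I$, and then use $\C[x,y]/(I\cap(x,y)^k)=\C[x,y]/I\oplus\C[x,y]/(x,y)^k$ for $Z$ away from $\mathbf{0}$. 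The block upper triangular structure then gives the Laplace vanishing without any limits.
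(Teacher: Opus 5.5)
Your proposal is correct and follows essentially the same route as the paper. Both reduce to degrees $m=0,1$ using generation in degree one on both sides (Corollary~\ref{cor:fg on rays}), and handle $m=0$ by setting the coordinates of the points at the origin to zero. For $m=1$, both use a Laplace/cofactor expansion showing that only $T\supseteq\{a+b<k\}$ survive, with pullback $\pm\Delta_{T\setminus\{a+b<k\}}$.

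Your justification of that vanishing is the one real difference, and it is a slightly more self-contained version of the paper's step. You use the splitting $\C[x,y]/(I_Z\cap(x,y)^k)=\C[x,y]/I_Z\oplus\C[x,y]/(x,y)^k$ of the tautological bundle, which makes the block-triangular structure explicit. The paper instead normalizes by $\Delta_{S_k}$ and appeals to Haiman's local coordinates.
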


\begin{proof}
    One easily checks that the pullback of $\CO_{\Hilb^{n+k(k+1)/2}(\C^2)}(m)$ is $\CO_{\Hilb^{n}(\C^2\setminus\mathbf{0})}(m)$. 
    The codomain of this ring map is generated in degree $m=1$ by definition, and the claimed image is generated in degree $1$ by Corollary \ref{cor:fg on rays}. It therefore suffices to check the claim for $m=0,1$.

    For $m=0,1$, there are natural inclusions
    \begin{equation}\label{eq:decomp}
        A^m\left(n+\frac{k(k+1)}{2}\right) \subseteq A^m(n)\otimes A^m\left(\frac{k(k+1)}{2}\right), 
    \end{equation} 
    where we consider the first factor in the variables $x_i,y_i$ for $i=1,\dots,n$ and the second factor in the variables $x_i,y_i$ for $i=n+1,\dots,n+k(k+1)/2$. We can then compute the pullback by $\iota_k$ of any such section by evaluating the second factors at the scheme $Z_k$ corresponding to the monomial ideal $(x,y)^k$.

    In degree $m=0$ this is straightforward as the sections depend only on the support of the scheme, which in this case is entirely at the origin. In other words, the pullback map in degree $0$ is given by setting $x_i=y_i=0$ for all $i=n+1,\dots,n+k(k+1)/2$. It is clear that the image of this degree $0$ piece is all of $A^0(n)$, as claimed.

    Degree $m=1$ requires more care since the sections depend on the schemes themselves rather than just their supports. First, observe that for any subset $S\subseteq \Z^2_{\geq 0}$ with $|S| = n+k(k+1)/2$, the decomposition of $\Delta_S = \det (x_j^{a_i}y_j^{b_i})$ as in \eqref{eq:decomp} is given by the cofactor expansion along the first $n$ columns. This gives an expansion of the form
    \[ \Delta_S = \sum_{\substack{S'\sqcup S'' = S\\ |S'| = n, \, |S''| = k(k+1)/2}} \Delta_{S'} \otimes \Delta_{S''} \]
    where we recall that each determinant is defined only up to sign. 

    Next, we need to explain what we mean by evaluation at $Z_k$ for $m=1$. Consider the subset $S_k=\{ (a,b)\in \Z^2_{\geq 0} \,|\, a+b< k \}$, then $\Delta_{S_k}$ defines a section of $\CO(1)$ on $\Hilb^{k(k+1)/2}$ which does not vanish at $Z_K$.
.   We have an embedding 
    $$
    A\left(\frac{k(k+1)}{2}\right)\simeq \frac{1}{\Delta_{S_k}}A\left(\frac{k(k+1)}{2}\right)\subset \C\left(x_i,y_i:1\le i\le k(k+1)/2\right).
    $$
    The minor $\Delta_{S''}$ is sent to $\Delta_{S''}/\Delta_{S_k}$.
    Now we claim that 
    \[ \Delta_{S''}(Z_k)/\Delta_{S_k}(Z_K) = \begin{cases}
        1 & S'' = S_k
        \\
        0 & \text{else}.
    \end{cases} \]
    This is clear from Haiman's description of these sections in local coordinates \cite{Haimanqt}. It follows that the degree $m=1$ part of the image of $\iota_k^*$ is spanned by determinants $\Delta_{S'}$ where $S'\subseteq \Z^2_{\geq 0}$ has $a+b\geq k$ for all $(a,b)\in S'$. This image is therefore $A^1_{\geq k}(n),$ which generates the desired ring completing the proof.
\end{proof}

Now we may prove Theorem \ref{thm:main1} identifying $X_k$ as a certain birational model of $\Hilb^n(\Bl_{\mathbf{0}}(\C^2))$.

\begin{proof}[Proof of Theorem \ref{thm:main1}]
    Since the image of $\iota_k$ is dense in $X_k$, pullback by $\iota_k$ coincides with restriction to $X_k$. The kernel of $\iota_k^*$ is therefore the ideal $I(X_k(n))$ in the homogeneous coordinate ring of $\Hilb^{n+k(k+1)/2}(\C^2)$, and the image of $\iota_k^*$ is given in Proposition \ref{prop:image of pullback}. The first isomorphism theorem therefore gives an identification 
    \[ \bigoplus_{m\geq 0}A^m_{\geq km}(n) \simeq \left(\bigoplus_{m\geq 0} A^m\left( n+\frac{k(k+1)}{2} \right)\right) \bigg/ I(X_k(n)). \]
    where the right-hand side is the homogeneous coordinate ring of $X_k(n) \subseteq \Hilb^{n+k(k+1)/2}(\C^2)$ as desired.
\end{proof}

We have similar results for the product map
\[ \iota_k\times\iota_{k+1}:\Hilb^n(\C^2\setminus\mathbf{0})\to \Hilb^{n+k(k+1)/2}(\C^2)\times \Hilb^{n+(k+1)(k+2)/2}(\C^2). \]

\begin{corollary}\label{cor:image of bigraded pullback}
    Pullback by $\iota_k\times \iota_{k+1}$ defines a ring map
    \begin{equation*}
         \bigoplus_{m_1\geq 0} A^{m_1}\left(n+\frac{k(k+1)}{2}\right)\otimes \bigoplus_{m_2\geq 0} A^{m_2}\left(n+\frac{(k+1)(k+2)}{2}\right) \to \bigoplus_{m_1,m_2\geq 0} A^{m_1+m_2}(n).
    \end{equation*}
    whose image is
    \[ \bigoplus_{m_1,m_2\geq 0} A^{m_1+m_2}_{\geq m_1k+m_2(k+1)}(n). \]
\end{corollary}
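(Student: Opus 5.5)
The plan is to deduce the corollary from Proposition \ref{prop:image of pullback} together with Theorem \ref{thm:infinite chambers fg}. Pulling back an exterior tensor product of sections under the product map $\iota_k\times\iota_{k+1}$ gives the product of the individual pullbacks:
\[ (\iota_k\times\iota_{k+1})^*(f\otimes g) = \iota_k^*(f)\cdot\iota_{k+1}^*(g). \]
Here $\iota_k^*(f)\in A^{m_1}(n)$ and $\iota_{k+1}^*(g)\in A^{m_2}(n)$. As observed in the proof of Proposition \ref{prop:image of pullback}, the pullback of $\CO(m_1)\boxtimes\CO(m_2)$ is $\CO(m_1+m_2)$ on $\Hilb^n(\C^2\setminus\mathbf{0})$. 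Hence the pullback is a well-defined bigraded ring homomorphism landing in $\bigoplus A^{m_1+m_2}(n)$.

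Next I would identify the image in each bidegree $(m_1,m_2)$. It is the span of the products $\iota_k^*(f)\iota_{k+1}^*(g)$. By Proposition \ref{prop:image of pullback}, this span is exactly
\[ A^{m_1}_{\geq m_1k}(n)\cdot A^{m_2}_{\geq m_2(k+1)}(n). \]

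\begin{itemize}
\item Containment in $A^{m_1+m_2}_{\geq m_1k+m_2(k+1)}(n)$ is immediate. Every monomial in such a product satisfies $a_i+b_i\geq m_1k+m_2(k+1)$ for all $i$, since this condition is additive over products of terms.
\item For the reverse inclusion, use Corollary \ref{cor:fg on rays}. It gives $A^{m_1}_{\geq m_1k}=(A^1_{\geq k})^{m_1}$ and $A^{m_2}_{\geq m_2(k+1)}=(A^1_{\geq k+1})^{m_2}$. Their product is then $(A^1_{\geq k})^{m_1}(A^1_{\geq k+1})^{m_2}$. By Theorem \ref{thm:infinite chambers fg} (more precisely, the equality at the end of its proof, which comes from Corollary \ref{cor:Minkowski decomp} and Theorem \ref{thm:HS lead terms}), this equals $A^{m_1+m_2}_{\geq m_1k+m_2(k+1)}(n)$.
\end{itemize}

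The edge cases where $m_1=0$ or $m_2=0$ reduce directly to Proposition \ref{prop:image of pullback}. The only other ingredient is that degree-zero pullbacks are surjective onto $A^0(n)$, which was shown in that proof.

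There is no serious obstacle. The one point needing care is that sections of the exterior product on $\Hilb\times\Hilb$ are spanned by tensor products of sections, so the image really is the span of products of the separate images. This holds by the Künneth formula for $H^0$ of exterior tensor products on quasi-projective varieties over $\C$.
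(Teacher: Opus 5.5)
Your proposal is correct and follows essentially the same route as the paper. The paper factors $\iota_k\times\iota_{k+1}$ through the diagonal of $\Hilb^n(\C^2\setminus\mathbf{0})\times\Hilb^n(\C^2\setminus\mathbf{0})$, applies Proposition \ref{prop:image of pullback} to each factor, and then uses Theorem \ref{thm:infinite chambers fg} to identify $A^{m_1}_{\geq m_1k}(n)\cdot A^{m_2}_{\geq m_2(k+1)}(n)$ with $A^{m_1+m_2}_{\geq m_1k+m_2(k+1)}(n)$; this is exactly your identity $(\iota_k\times\iota_{k+1})^*(f\otimes g)=\iota_k^*(f)\,\iota_{k+1}^*(g)$ followed by the same citations (your K\"unneth remark is harmless but unnecessary, since the domain is already defined as a tensor product of section rings).
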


\begin{proof}
    The map $\iota_k\times\iota_{k+1}$ factors as the diagonal embedding into the locus
    \[ \Hilb^n(\C^2\setminus\{\mathbf{0}\})\times \Hilb^n(\C^2\setminus\{\mathbf{0}\}) \subseteq \Hilb^{n+k(k+1)/2}(\C^2)\times \Hilb^{n+(k+1)(k+2)/2}(\C^2) \]
    where  the factors are embedded as $\iota_k$ and $\iota_{k+1}$ separately. By Proposition \ref{prop:image of pullback}, the image of the restriction map to this product is
    \[ \bigoplus_{m_1,m_2\geq 0} A^{m_1}_{\geq m_1 k}(n) \otimes A^{m_2}_{\geq m_2 (k+1)}(n). \]
    Further restricting to the diagonal turns tensor products into ordinary products, and by Theorem \ref{thm:infinite chambers fg} we have
    \[ A^{m_1}_{\geq m_1 k}(n) \cdot A^{m_2}_{\geq m_2 (k+1)} = \left(A^{1}_{\geq k}(n)\right)^{m_1} \cdot \left(A^{1}_{\geq k+1}(n)\right)^{m_2} = A^{m_1+m_2}_{\geq m_1k+m_2(k+1)}(n) \]
    as desired.
\end{proof}

We can now prove Theorem \ref{thm:main2} using a similar argument to Theorem \ref{thm:main1}.

\begin{proof}[Proof of Theorem \ref{thm:main2}]
    By the same argument as in the proof of Theorem \ref{thm:main1}, the kernel of $(\iota_k\times \iota_{k+1})^*$ is the ideal of $X_{k,k+1}$ in the bigraded section ring of the product 
    \[ \Hilb^{n+k(k+1)/2}(\C^2)\times \Hilb^{n+(k+1)(k+2)/2}(\C^2). \]
    Corollary \ref{cor:image of bigraded pullback} and the First Isomorphism Theorem imply that the bigraded section ring of $X_{k,k+1}$ is 
    \[ \bigoplus_{m_1,m_2\geq 0} A^{m_1+m_2}_{\geq m_1k+m_2(k+1)}(n), \]
    as desired. 
\end{proof}

\section{Birational maps and base loci of line bundles on $\Hilb^n(\Bl_0 (\C^2))$}\label{sec:base locus}

In the previous section, we proved Theorems \ref{thm:main1} and \ref{thm:main2} identifying $X_k$ and $X_{k,k+1}$ as certain birational models of $\Hilb^n(\Bl_{\mathbf{0}}(\C^2))$. In this section, we explicitly describe the birational maps to these models, and their base loci.

\subsection{The rational maps to the models}

To describe the rational maps, we adapt a construction used in \cite{ABCH} for the Hilbert scheme of points on $\P^2$.

Fix an integer $k\geq0$. For each $Z\in \Hilb^n(\Bl_{\mathbf{0}}(\C^2))$, we have an exact sequence of sheaves on $\Bl_{\mathbf{0}}(\C^2)$
\[ 0\to \mathcal{I}_{Z}(-kE) \to \CO_{\Bl_{\mathbf{0}}(\C^2)}(-kE) \to \CO_Z(-kE) \to 0. \]
Taking global sections, we obtain  
\begin{equation*}
    I_{Z,k}:= H^0\left(\Bl_{\mathbf{0}}(\C^2), \, \mathcal{I}_Z(-kE)\right) \subseteq H^0\left(\Bl_{\mathbf{0}}(\C^2), \, \CO_{\Bl_{\mathbf{0}}(\C^2)}(-kE)\right) \simeq \C[x,y]_{\geq k}\subseteq \C[x,y].
\end{equation*} 

The content of the following proposition is that $I_{Z,k}\subseteq \C[x,y]$ is an ideal, and for generic $Z$ this ideal has colength $n+k(k+1)/2$.

\begin{proposition}
    The operation $Z\mapsto I_{Z,k}$ defines a rational map
    \[ \varphi_k:\Hilb^n(\Bl_{\mathbf{0}}(\C^2)) \dashrightarrow \Hilb^{n+k(k+1)/2}(\C^2) \]
    whose essential image is the Brill-Noether locus $X_k(n)$.
\end{proposition}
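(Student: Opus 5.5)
The plan is to establish three facts: that $I_{Z,k}$ is an ideal of $\C[x,y]$; that its colength is $n+k(k+1)/2$ on a dense open set, so that $\varphi_k$ is a morphism there; and that $\varphi_k$ agrees with $\iota_k$ on the open set of schemes disjoint from $E$, which forces the closure of its image to be $X_k(n)$.

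The ideal property is formal. The pushforward $\pi_*\CO_{\Bl_{\mathbf{0}}(\C^2)} = \CO_{\C^2}$ makes $H^0(\Bl_{\mathbf{0}}(\C^2),-)$ of any $\CO_{\Bl_{\mathbf{0}}(\C^2)}$-module a $\C[x,y]$-module. The inclusion $\mathcal{I}_Z(-kE)\subseteq \CO(-kE)$ is a map of such modules, so $I_{Z,k}$ is a $\C[x,y]$-submodule of $H^0(\CO(-kE)) = (x,y)^k$, and hence an ideal.

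For the colength, I would use the long exact sequence
\[ 0\to I_{Z,k}\to (x,y)^k \to H^0(\CO_Z(-kE)) \to H^1(\mathcal{I}_Z(-kE)) \to H^1(\CO(-kE)). \]
Here $H^0(\CO_Z(-kE))$ has dimension $n$, and $(x,y)^k$ has colength $k(k+1)/2$. The standard computation $R^1\pi_*\CO(-kE)=0$ for $k\ge 0$ gives $H^1(\CO(-kE))=0$, since $\C^2$ is affine. So
\[ \dim \C[x,y]/I_{Z,k} = k(k+1)/2 + n - \dim H^1(\mathcal{I}_Z(-kE)). \]
Colength exactly $n+k(k+1)/2$ is therefore equivalent to surjectivity of $(x,y)^k\to H^0(\CO_Z(-kE))$. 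This holds when $Z\cap E=\emptyset$. In that case $\CO_Z(-kE)\cong\CO_Z$, and $Z$ lives in $\Bl_{\mathbf{0}}(\C^2)\setminus E\cong\C^2\setminus\mathbf{0}$. The map $(x,y)^k\to \CO_Z$ is surjective by the Chinese remainder theorem, because $(x,y)^k$ and $I_Z$ are comaximal. Moreover the kernel is exactly $(x,y)^k\cap I_Z = \iota_k(I_Z)$.

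The loci where the colength is $n+k(k+1)/2$ is open by upper semicontinuity of $h^1$ in the flat family $\mathcal{I}_Z(-kE)$ over $\Hilb^n(\Bl_{\mathbf{0}}(\C^2))$. There cohomology and base change shows that $\pi_*$ of the universal family is flat of the right rank, giving a morphism to $\Hilb^{n+k(k+1)/2}(\C^2)$. Hence $\varphi_k$ is a rational map defined at least on the complement of $E_n$, and it restricts there to $\iota_k$ under the identification $\Hilb^n(\Bl_{\mathbf{0}}(\C^2)\setminus E)=\Hilb^n(\C^2\setminus\mathbf{0})$.

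Since the complement of $E_n$ is dense and irreducible, the closure of the image of $\varphi_k$ equals the closure of the image of $\iota_k$, which is $X_k(n)$ by definition.

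The main obstacle is the flatness and base-change point needed to make $\varphi_k$ an honest morphism on the open locus. I would handle it by working only over the complement of $E_n$. There the family $\iota_k(I_Z)$ is visibly flat, with constant colength, so $\varphi_k$ is a morphism there, and that already suffices for a rational map. The optional claim that $\varphi_k$ extends to the larger locus where $h^1=0$ would follow from Grauert's theorem applied to $\pi_*\mathcal{I}_{\mathcal{Z}}(-kE)$, but this is not needed for the statement.
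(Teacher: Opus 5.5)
Your proposal is correct and is essentially the paper's argument: $I_{Z,k}$ is an ideal because sections of $\CO(-kE)$ form a $\C[x,y]$-module; for $Z$ disjoint from $E$ one has $I_{Z,k}=(x,y)^k\cap I(Z')$ of colength $n+k(k+1)/2$; and density of that open set makes the essential image the closure of the image of $\iota_k$, which is $X_k(n)$. The long exact sequence with $R^1\pi_*\CO(-kE)=0$ and the semicontinuity remark are not needed here, since surjectivity of $(x,y)^k\to H^0(\CO_Z(-kE))$ alone controls the colength, and semicontinuity would need care because $\Bl_{\mathbf{0}}(\C^2)$ is not proper; so you are right to fall back on the complement of $E_n$.
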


\begin{proof}
    Let $s_E$ denote the canonical section of $\CO_{\Bl_{\mathbf{0}}(\C^2)}(E)$ and $\pi:\Bl_{\mathbf{0}}(\C^2) \to \C^2$ the blowup. We may then describe $I_{Z,k}\subseteq \C[x,y]$ as the set of global functions $f$ on $\C^2$ vanishing to order $\geq k$ at the origin and such that the restriction of $\pi^*(f)/s_E^k$ to $Z$ is $0$. It is clear from this description that $I_{Z,k}\subseteq \C[x,y]$ is an ideal. 
    
    Now we show that the dimension of $\C[x,y]/I_{Z,k}$ is generically $n+k(k+1)/2$ for $Z$ on a dense open subset of $\Hilb^n(\Bl_{\mathbf{0}}(\C^2))$. Indeed, if the support of $Z$ is disjoint from $E$ we can push $Z$ forward to a subscheme $Z'\subseteq \C^2$ whose support is disjoint from the origin. In this case, $I_{Z,k}$ can be described as 
    \[ I_{Z,k} = (x,y)^k \cap I(Z') \subseteq \C[x,y]. \]
    This is the ideal of the disjoint union of the length $n$ subscheme $Z'$ with a nonreduced scheme supported at the origin of length $k(k+1)/2$. This shows that the operation $Z\mapsto I_{Z,k}$ defines a rational map to $\Hilb^{n+k(k+1)/2}(\C^2)$. Finally, the essential image of this rational map is the closure of the image of this open subset, which is $X_k(n)$ by definition.
\end{proof}

Now we describe the indeterminacy loci of the rational maps $\varphi_k$. Let $Y_\ell\subseteq \Hilb^n(\Bl_0 (\C^2))$ be the locus of subschemes $Z$ such that the length of the scheme theoretic intersection $Z\cap E$ is at least $\ell$. we have inclusions 
\[\Hilb^n(\Bl_0 (\C^2)) = Y_0\supsetneq Y_1 \supsetneq \cdots \supsetneq Y_n = \Hilb^n(E). \]
For $\ell>n$, $Y_\ell$ is empty.

\begin{lemma}
\label{lem: Yl irreducble}
For each $\ell=0,1,\dots,n$, $Y_\ell$ is an irreducible  subvariety of $\Hilb^n(\Bl_0 (\C^2))$ of codimension $\ell$.
\end{lemma}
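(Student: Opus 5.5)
We will stratify $Y_\ell$ by the length of $Z\cap E$ and by how $Z$ splits into a part supported on $E$ and a part supported off $E$. The open stratum consists of schemes $Z=Z_1\sqcup Z_2$, where $Z_1$ is a length-$\ell$ subscheme of the curve $E\simeq\P^1$ (curvilinear, contained in $E$) and $Z_2$ is a length $n-\ell$ subscheme disjoint from $E$. This stratum is isomorphic to an open subset of $\Hilb^\ell(E)\times\Hilb^{n-\ell}(\Bl_{\mathbf{0}}(\C^2)\setminus E)$. It is therefore irreducible of dimension $\ell+2(n-\ell)=2n-\ell$. The plan is to show that this stratum is dense in $Y_\ell$, and that $Y_\ell$ has codimension at most $\ell$ everywhere. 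The latter gives pure dimension $\geq 2n-\ell$, which then forces every component to equal the closure of the open stratum.

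\textbf{Step 1 (local equations).} We show that $Y_\ell$ is locally cut out by $\ell$ equations, which gives codimension $\le\ell$ for every component. Let $\mathcal{Z}\subseteq\Hilb^n\times\Bl_{\mathbf{0}}(\C^2)$ be the universal family and $p$ the projection to the Hilbert scheme. Then $\mathcal{E}=p_*\CO_{\mathcal{Z}}$ is a rank $n$ vector bundle. The canonical section $s_E$ of $\CO(E)$, restricted to $\mathcal{Z}$, gives a multiplication map $\mathcal{E}\otimes L^{-1}\to\mathcal{E}$, where $L$ is the pullback of $\CO(E)$; since $\mathcal{O}(E)$ is trivial near any finite set after twisting, we may work locally. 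The cokernel of this map has fiber $\CO_{Z\cap E}$. Hence $Y_\ell$ is the degeneracy locus where this map has rank $\le n-\ell$. The general bound on degeneracy loci only gives codimension $\le\ell^2$, which is too weak. Instead we use that the multiplication map by a single function $f=s_E$ on $\CO_Z$ has $\dim\ker=\dim\operatorname{coker}$, and that $\operatorname{coker}=\CO_Z/f$. Locally, the condition $\operatorname{length}(Z\cap E)\ge\ell$ is the vanishing of the last $\ell$ coefficients of the characteristic polynomial $\det(t-f|_{\CO_Z})$. More precisely, the multiplicity of eigenvalue $0$ is at least the length of $Z\cap E$, so this only gives an inclusion. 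We therefore prefer a dimension count instead.

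\textbf{Step 1$'$ (dimension count, preferred).} We stratify $Y_\ell$ by $Z=Z_1\sqcup Z_2$, with $Z_1$ the part supported on $E$, of length $j\ge\ell$, and $Z_2$ of length $n-j$ off $E$. We then need to show that the locus of length-$j$ schemes supported on $E$ with $\operatorname{length}(Z_1\cap E)\ge\ell'$ has dimension $\le 2j-\ell'$ for every $\ell'$ with $\ell\le\ell'\le j$, with equality only when $j=\ell'$ and $Z_1\subseteq E$. This is the \textbf{main obstacle}. We plan to use the local structure: near a point of $E$, $\Bl_{\mathbf{0}}(\C^2)$ looks like $\C^2$ with $E=\{v=0\}$. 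The needed statement is then that in $\Hilb^j(\C^2)$, the locus of schemes supported on the line $\{v=0\}$ with $\operatorname{length}(Z\cap\{v=0\})\ge\ell'$ has dimension $\le 2j-\ell'$, in fact $\le 2j-\ell'-1$ unless $Z\subseteq\{v=0\}$. We would prove this by a $\C^\times$-torus action and Białynicki-Birula cells, or more simply by Briançon-type fiber dimension bounds: the punctual Hilbert scheme at a point has dimension $j-1$, and the support of $Z_1$ moves on the curve $E$. This gives dimension $\le j$ for the locus of schemes supported on $E$. Comparing, $j\le 2j-\ell'$ holds, with equality only if $\ell'=j$. In that case, length$(Z_1\cap E)=j$ forces $Z_1\subseteq E$. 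So all other strata have dimension $\le (2j-\ell'-1)+2(n-j)<2n-\ell$.

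\textbf{Step 2.} Combining the above, $Y_\ell$ is a finite union of strata, and all strata except the open one have dimension $<2n-\ell$. Closedness of $Y_\ell$ follows from upper semicontinuity of the fiber dimension of the cokernel sheaf $\CO_{\mathcal{Z}\cap E}$. Each component of $Y_\ell$ must be the closure of some stratum. To rule out small components, we check that every point of a lower stratum lies in the closure of the open stratum. For this we deform $Z_1$ by moving its points off $E$ while keeping a length-$\ell$ curvilinear piece on $E$; this is possible since the family $E$-curvilinear schemes is smooth. Alternatively, if Step 1 could be made to work, it would give codimension $\le\ell$ directly, which rules out small components. Either way, we conclude that $Y_\ell$ is irreducible of codimension exactly $\ell$.
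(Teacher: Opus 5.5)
\textbf{There is a genuine gap: your argument does not establish irreducibility.}

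Your dimension count is fine. The open stratum $S\simeq\Hilb^\ell(E)\times\Hilb^{n-\ell}(\Bl_{\mathbf 0}(\C^2)\setminus E)$ is irreducible of dimension $2n-\ell$. By Brian\c{c}on, the length-$j$ schemes supported on $E$ form a locus of dimension $j$, so every stratum with $j>\ell$ has dimension at most $2n-j<2n-\ell$.

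But this only proves $\dim Y_\ell=2n-\ell$ and that $\overline{S}$ is the unique component of maximal dimension. It does not exclude further irreducible components of smaller dimension inside the lower strata. To exclude them you need either a lower bound on the dimension of every component of $Y_\ell$, or a proof that the lower strata lie in $\overline{S}$. The first is unavailable, as you observe yourself in Step 1. $Y_\ell$ is a rank locus of expected codimension $\ell^2$, not $\ell$. The $\ell$ lowest coefficients of the characteristic polynomial cut out a strictly larger locus: the schemes with at least $\ell$ points of $Z$, counted with multiplicity, on $E$. For example, a double point at a point of $E$ transverse to $E$ lies in it but $Z\cap E$ has length $1$. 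So your closing ``either way'' has only one way left.

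That remaining way is the claim that every $Z\in Y_\ell$ is a limit of points of $S$. This is the whole content of the lemma, and the sentence ``the family of $E$-curvilinear schemes is smooth'' does not establish it. For $Z$ supported at a single point of $E$ there is no ``length-$\ell$ piece on $E$'' to keep. You must split a punctual, possibly non-curvilinear, scheme in a flat family so that exactly a length-$\ell$ subscheme stays on $E$ and the rest moves off. No construction is given that does this for arbitrary $Z$.

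The paper avoids explicit degenerations. In the local model $E=\{y=0\}$ it introduces
\[
\widetilde{Y}_\ell=\{J\supset I:\ yJ\subset I,\ \dim J/I=\ell\}\subset\Hilb^{n-\ell,n}(\C^2).
\]
This surjects onto $Y_\ell$: take $J/I$ to be any $\ell$-dimensional $x$-stable subspace of $\ker y$ acting on $\C[x,y]/I$, which exists because $x$ preserves $\ker y$. It is also smooth and connected, hence irreducible, by the same argument as for the parabolic flag Hilbert schemes of Carlsson--Gorsky--Mellit. Irreducibility of $Y_\ell$ follows at once, and the dimension $2n-\ell$ is then read off from the open locus of reduced schemes.

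To complete your route, you would need either an irreducible variety of this kind dominating $Y_\ell$, or a genuine degeneration argument placing each lower stratum in $\overline{S}$.
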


\begin{proof}
Without loss of generality, consider the local model where the pair $E\subset \Bl_0 (\C^2)$ is replaced by $\{y=0\}\subset \C^2$. The length of the scheme theoretic intersection $\Spec(I)\cap E$ equals the dimension of cokernel (or, equivalently, kernel) of $y$ acting on $\C[x,y]/I$. We get 
$$
Y_{\ell}^{\mathrm{loc}}=\{\C[x,y]\supset I: \dim \Ker\  y|_{\C[x,y]/I}\ge \ell\}\subset \Hilb^n(\C^2),
$$
and another variety 
$$
\widetilde{Y}_{\ell}^{\mathrm{loc}}=\{\C[x,y]\supset J\supset I: yJ\subset I,\dim J/I=\ell\}\subset \Hilb^{n-\ell,n}(\C^2).
$$
Note that $yJ\subset I$ is equivalent to $J/I\subset \Ker\  y|_{\C[x,y]/I}$.
Similarly to \cite[Theorem 4.1.6]{CGM} one can check that $\widetilde{Y}_{\ell}^{\mathrm{loc}}$ is smooth and connected, so it is irreducible. The natural projection $\widetilde{Y}_{\ell}^{\mathrm{loc}}\to Y_{\ell}^{\mathrm{loc}}$
is surjective, 
so $Y_{\ell}^{\mathrm{loc}}$ is irreducible as well. On the other hand,  $Y_{\ell}^{\mathrm{loc}}$ has an open subset where all points are distinct, and $\ell$ points are on the line $\{y=0\}$ - this has dimension $2n-\ell$, so $\dim Y_{\ell}^{\mathrm{loc}}=2n-\ell$.
\end{proof}

\begin{proposition}\label{prop:ind locus}
    For all $k\geq 0$, the indeterminacy locus of $\varphi_k$ is $Y_{k+1}$. In other words, we have
    \[ \dim_{\C}\left(\C[x,y]/I_{Z,k}\right) = n + \frac{k(k+1)}2 \]
    if and only if $Z\notin Y_{k+1}$.
\end{proposition}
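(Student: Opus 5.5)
The plan is to turn the colength count into a cohomology vanishing statement on $\Bl_{\mathbf{0}}(\C^2)$. Since $\C[x,y]/\C[x,y]_{\geq k}$ has dimension $k(k+1)/2$, and $\C[x,y]_{\geq k}\simeq H^0(\CO(-kE))$, the exact sequence $0\to\mathcal{I}_Z(-kE)\to\CO(-kE)\to\CO_Z(-kE)\to 0$ gives
\[ \dim_\C \C[x,y]/I_{Z,k} = \frac{k(k+1)}{2} + \operatorname{rank}\left(H^0(\CO(-kE))\to H^0(\CO_Z(-kE))\right). \]
The target has dimension $n$. So $\dim_\C \C[x,y]/I_{Z,k} = n+k(k+1)/2$ holds exactly when the restriction map is surjective. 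For $k\geq 0$ we have $H^1(\CO(-kE))=0$, because $R^1\pi_*\CO(-kE)=0$ and $\C^2$ is affine. Hence surjectivity is equivalent to $H^1(\mathcal{I}_Z(-kE))=0$. The proposition therefore reduces to the claim that $H^1(\mathcal{I}_Z(-kE))=0$ if and only if $Z\notin Y_{k+1}$.

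Next I decompose $Z$ along $E$. Let $W=Z\cap E$, of length $\ell$. Let $Z'$ be the residual scheme, with ideal $(\mathcal{I}_Z:s_E)$ and length $n-\ell$. There is a short exact sequence
\[ 0\to \mathcal{I}_{Z'}(-(k+1)E)\to \mathcal{I}_Z(-kE)\to \mathcal{I}_{W\subset E}\otimes \CO_E(-kE)\to 0. \]
Here $\CO_E(-E)\simeq\CO_{\P^1}(1)$, so the right-hand term is $\CO_{\P^1}(k-\ell)$.

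For the direction ``$Z\notin Y_{k+1}\Rightarrow$ surjective'', I induct on $n$ (or on the multiplicity of $Z$ along $E$) via this sequence. When $\ell$ is small, the term $H^1(\CO_{\P^1}(k-\ell))$ vanishes. The term $H^1(\mathcal{I}_{Z'}(-(k+1)E))$ vanishes by induction, since $|Z'\cap E|\leq \ell$ and $k+1>k$. The base case, $Z$ disjoint from $E$, holds because $(x,y)^k$ and $I(Z')$ are comaximal.

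For the converse, given $\ell\geq k+1$, I will exhibit a nonzero element of the cokernel directly. Degree-$k$ sections of $\CO(-kE)$ restrict to $E$ through $H^0(\CO_{\P^1}(k))$, whose dimension is $k+1$. Their restrictions to the length-$\ell$ scheme $W\subset E$ therefore span at most a $(k+1)$-dimensional space. Higher-order sections vanish on $E$ to higher order, so their restrictions to $W$ vanish. This forces a rank deficit on $H^0(\CO_W(-kE))$, and hence on $H^0(\CO_Z(-kE))$, once $\ell$ exceeds that count.

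The main obstacle is this converse count, together with getting the boundary index exactly right. The numerology of $H^1(\CO_{\P^1}(k-\ell))$ vanishes precisely for $\ell\leq k+1$, so the threshold must be pinned down carefully. Before finalizing, I would test small cases such as $k=0$, $n=1$, with $Z$ a single point of $E$. There $I_{Z,0}=(x,y)$ has colength $1$, so the colength is still the generic value although $Z\in Y_1$. This example indicates that the decomposition must be analysed very carefully, since the equivalence with $Y_{k+1}$ as worded depends delicately on the boundary case $\ell=k+1$.
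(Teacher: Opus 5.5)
Your $k=0$, $n=1$ example is not a sign of a delicate boundary case. It is a genuine counterexample to the proposition as worded, and the real gap in your proposal is that it never commits to what is actually true.

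Both halves of your argument put the boundary case $\ell=k+1$ on the generic side. First, $H^1(\CO_{\P^1}(k-\ell))=0$ exactly when $\ell\le k+1$. Second, the rank obstruction through the $(k+1)$-dimensional space $H^0(\CO_E(-kE))\simeq H^0(\CO_{\P^1}(k))$ only bites when $\ell\ge k+2$. So what you can prove is
\[ \dim_{\C}\left(\C[x,y]/I_{Z,k}\right)=n+\frac{k(k+1)}{2} \iff |Z\cap E|\le k+1 \iff Z\notin Y_{k+2}. \]
That is, the indeterminacy locus of $\varphi_k$ is $Y_{k+2}$, and $\varphi_k$ is already regular for $k\ge n-1$. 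This is also exactly what the paper's own proof establishes: its final criterion is $\dim\C[x,u]/(I(Z)+(x))\le k+1$. It is also the version consistent with Theorem~\ref{thm:base locus} and the lemma before it. There, $\CO(k,1)$ restricts trivially to $Y'_{k+1}\simeq\P^{k+1}$, and $Y_\ell$ lies in the base locus of $\CO(k,1)$ if and only if $\ell\ge k+2$. The index $k+1$ in the statement is an off-by-one slip. You should state and prove the shifted version rather than hedge.

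For the corrected statement your argument works, and it partly differs from the paper's.

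\emph{Reduction and converse.} Your reduction to $H^1(\mathcal{I}_Z(-kE))=0$ is correct. Your converse is the paper's: its commutative square, with $W_k/(W_k\cap(x))$ spanned by $1,u,\dots,u^k$, is your factorization of restriction to $Z\cap E$ through $H^0(\CO_{\P^1}(k))$. The converse also falls out of your residual sequence. Coherent sheaves on $\Bl_{\mathbf{0}}(\C^2)$ have no $H^2$, so $H^1(\mathcal{I}_Z(-kE))$ surjects onto $H^1(\CO_{\P^1}(k-\ell))$, which is nonzero when $\ell\ge k+2$.

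\emph{Forward direction.} Here the routes genuinely diverge. The paper stays in the chart with coordinates $(x,u)$. It lifts relations $u^j-f(u)$ from $I(Z)+(x)$ to $I(Z)$ using $\C[u]\cap(I(Z)+(x))=\C[u]\cap I(Z)$. That identity fails in general: for $k=0$ and $I(Z)=(u-x,x^2)$, $u$ lies in the left side but not the right. So that step needs a repair: a Nakayama argument on the part of $\C[x,u]/I(Z)$ where $x$ is nilpotent, plus comaximality off $E$. Your residual sequence with induction on $n$ sidesteps this.

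The one fact you use without proof is $|Z'\cap E|\le |Z\cap E|$. It holds because multiplication by a local equation $x$ of $E$ identifies $\CO_{Z'}$ with $x\CO_Z$, and maps $\CO_Z/x\CO_Z$ onto $x\CO_Z/x^2\CO_Z\simeq \CO_{Z'\cap E}$.

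The paper's route is explicit and stays close to the monomial spaces $W_k$ that mirror $A^m_{\geq k}$. Yours is coordinate-free and identifies the colength defect as $\dim H^1(\mathcal{I}_Z(-kE))$.
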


In particular, Proposition \ref{prop:ind locus} implies that $\varphi_k$ is regular if $k\geq n$, since in this case $Y_{k+1}$ is empty.

\begin{proof}
    First, note that we have $I_{Z,k}\subseteq \C[x,y]_{\geq k}$ by definition, so 
    \[ \dim_{\C}\left(\C[x,y]/I_{Z,k}\right) = n + \frac{k(k+1)}2 \]
    is equivalent to 
    \[ \dim_{\C}\left(\C[x,y]_{\geq k}/I_{Z,k}\right) = n. \]
    We now describe $\C[x,y]_{\geq k}/I_{Z,k}$ in local coordinates. Consider the open subset $U\subseteq \Bl_{\mathbf{0}}(\C^2)$ given by the complement of the strict transform of the line $x=0$ in $\C^2$. Possibly choosing a new coordinate system on $\C^2$, we may assume without loss of generality that $Z$ is contained in $U$. This open set $U$ is isomorphic to $\C^2$ with coordinates $(x,u)$ where the map 
    \[ U\hookrightarrow \Bl_{\mathbf{0}}(\C^2) \to \C^2\]
    is given by $(x,u)\mapsto (x,xu)$, and $E\cap U \subseteq U$ is defined by $x=0$. This shows that, under the identification of global sections of $\CO_{\Bl_{\mathbf{0}}(\C^2)}(-kE)$ with $\C[x,y]_{\geq k}$, the restriction of any global section $f(x,y)\in \C[x,y]_{\geq k}$ to $U$ is given in coordinates by $f(x,xu)/x^k\in \C[x,u]$. The image of a monomial $x^i y^j$ with $i+j\geq k$ under this map is $x^{i+j-k}u^j$. This implies that the image of the restriction map of global sections of $\CO_{\Bl_{\mathbf{0}}(\C^2)}(-kE)$ to $U$ is the subspace
    \[ W_k := \mathrm{span}\{ x^a u^b \, | \, b\leq a+k \}\subseteq \C[x,u]. \]
    
    We have taken $Z\subseteq U$, so there is a corresponding ideal $I(Z)\subseteq \C[x,u]$ of colength $n$. In our local coordinates, $I_{Z,k}\subseteq \C[x,y]_{\geq k}$ is identified with 
    \[ W_k \cap I(Z) \subseteq W_k. \]
    Equivalently, $W_k \cap I(Z)$ is the kernel of the composition of linear maps
    \[ W_k \hookrightarrow \C[x,u] \to \C[x,u]/I(Z). \]
    Since $I(Z)\subseteq \C[x,u]$ has colength $n$, we find that $I_{Z,k}\subseteq \C[x,y]_{\geq k}$ has colength $n$ if and only if $W_k$ spans $\C[x,u]/I(Z)$. 

    Now consider $I(Z\cap E) = I(Z) +(x) \subseteq \C[x,u]$. We have a commutative diagram of linear maps
    \begin{equation}\label{eq:diagram of sections} \begin{tikzcd} 
    W_k \arrow[r] \arrow[d] & \C[x,u]/I(Z) \arrow[d] \\
    W_k/(W_k\cap (x)) \arrow[r] & \C[x,u]/(I(Z)+(x))
    \end{tikzcd}\end{equation}
    coming from the inclusion $W_k\subseteq \C[x,u]$ and the natural quotient maps. Observe that the dimension of $\C[x,u]/(I(Z)+(x))$ is equal to the length of the scheme theoretic intersection $Z\cap E$, and $W_k/(W_k\cap (x))$ is spanned by $1,u,\dots,u^k$. Furthermore, note that both vertical maps in \eqref{eq:diagram of sections} are surjective.

    We need to show that the top map of \eqref{eq:diagram of sections} is surjective if and only if \[ \dim(\C[x,u]/(I(Z)+(x)))\leq k+1.\]

    If $W_k$ spans $\C[x,u]$ modulo $I(Z)$, so that the top map in \eqref{eq:diagram of sections} is surjective, then the bottom map in \eqref{eq:diagram of sections} is also surjective. Hence $\C[x,u]/(I(Z)+(x))$ is spanned by $1,u,\dots,u^k$ and therefore has dimension at most $k+1$ as desired.

    On the other hand, if $\dim(\C[x,u]/(I(Z)+(x)))\leq k+1,$ then it is necessarily spanned by $1,u,\dots,u^k$. This is because every finite-dimensional quotient $\C[x,u]/J$ has some basis of monomials $x^iy^j$ that is closed under divisibility, and all monomials divisible by $x$ are zero in $\C[x,u]/(I(Z)+(x))$.

    Finally, we will show that in this case every monomial $x^iu^j$ is equivalent to a linear combination of monomials in $W_k$ up to $I(Z)$. Indeed, since $1,u,\dots,u^k$ span $\C[x,u]/(I(Z)+(x))$, for each $j$ there is a relation $u^j -f(u) \in I(Z)+(x)$ for some polynomial $f(u)$ of degree at most $k$. Note that this polynomial is in $\C[u] \cap \left(I(Z)+(x) \right) = \C[u]\cap I(Z)$, so we in fact have $u^j -f(u) \in I(Z)$. Multiplying by $x^i$, we then also have $x^iu^j-x^if(u) \in I(Z)$. But all the monomials in $x^if(u)$ are in $W_k$, as $W_k$ is closed under multiplication by $x$ and contains $1,u,\dots,u^k$. This shows that $W_k$ spans $\C[x,u]/I(Z)$, which completes the proof.
\end{proof}

Now we state the main theorem of this section, determining the base loci of the line bundles $\CO(k,m)$.

\begin{theorem}\label{thm:base locus}
    For $k,m\geq 0$, the base locus of $\CO(k,m)$ contains $Y_\ell$ if and only if $k<m(\ell-1)$.
\end{theorem}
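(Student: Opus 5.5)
We must show: the base locus of $\CO(k,m)$ contains $Y_\ell$ exactly when $k<m(\ell-1)$, i.e. when $k/m<\ell-1$.

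\textbf{Sufficiency of $k\ge m(\ell-1)$.} Suppose $k\geq m(\ell-1)$. Write $k=qm+r$ with $0\le r<m$, so $q\geq \ell-1$. By Corollary \ref{cor:Minkowski decomp} and Theorem \ref{thm:HS lead terms} (as in the proof of Theorem \ref{thm:infinite chambers fg}),
\[ A^m_{\geq k}=(A^1_{\geq q})^{m-r}(A^1_{\geq q+1})^{r}. \]
It therefore suffices to show that sections of $\CO(j,1)$ with $j\in\{q,q+1\}$ do not all vanish at a generic point $Z$ of $Y_\ell$. For $j=q$ we have $q+1\ge \ell$, and for $j=q+1$ we have $j+1>\ell$. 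A generic $Z\in Y_\ell$ has $|Z\cap E|=\ell$, so $Z\notin Y_{j+1}$ in both cases (using $Y_{q+1}$ when $q\ge\ell$, and $Y_{\ell}\setminus Y_{\ell+1}$ when $q=\ell-1$). By Proposition \ref{prop:ind locus}, $\varphi_j$ is then regular at $Z$. Using $\iota_j^*$ from Proposition \ref{prop:image of pullback}, the sections $A^1_{\ge j}(n)$ are the pullbacks of sections of $\CO(1)$ on $\Hilb^{n+j(j+1)/2}(\C^2)$, which is ample and globally generated there. So on the regular locus of $\varphi_j$, $\CO(j,1)=\varphi_j^*\CO(1)$ is generated by $A^1_{\ge j}$, provided we identify $\varphi_j^*\CO(1)$ with $\CO(j,1)$. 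This holds on $\Hilb^n(\C^2\setminus\mathbf 0)$ by the pullback computation, and the identification extends across the complement of $Y_{j+1}$ because both sides are line bundles agreeing off the divisor $E_n$ with matching sections. I would check this by comparing the divisor of a single section; this is the main technical point. Taking products of nonvanishing sections then gives a section of $\CO(k,m)$ nonvanishing at $Z$, so $Y_\ell$ is not in the base locus.

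\textbf{Necessity of $k\ge m(\ell-1)$.} Suppose $k<m(\ell-1)$. We show every element of $A^m_{\ge k}$ vanishes on $Y_\ell$. Since $Y_\ell$ is irreducible (Lemma \ref{lem: Yl irreducble}), it suffices to check vanishing at the generic point: $Z$ reduced with $\ell$ points on $E$, namely $(0,u_1),\dots,(0,u_\ell)$ in the chart $(x,u)$, and the remaining points off $E$. Work in the local coordinates of the proof of Proposition \ref{prop:ind locus}. A section $F\in A^m_{\ge k}$ of $\CO(k,m)$ is represented near $Z$ by $F$ evaluated on $x_i$, $y_i=x_iu_i$, divided by $\prod_i x_i^{k}$ and by the local trivialization $\Delta^m$ of $\CO(m)$. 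Here $\Delta$ is the Vandermonde-type local generator, which for the $\ell$ points approaching $E$ behaves like $\prod_{i<j\le \ell}(u_i-u_j)\cdot$(powers of $x$). Degenerate by setting $x_1=\dots=x_\ell=t\to0$ and count the $t$-order. A monomial $\prod x_i^{a_i}y_i^{b_i}$ contributes $t^{\sum_{i\le\ell}(a_i+b_i)}$. Antisymmetry to order $m$ in the first $\ell$ points, as in Lemma \ref{lem:strict inclusions}, forces the minimal total degree in those variables to be at least $m\binom{\ell}{2}$ beyond the baseline. The plan is to compare this with the denominator order $k\ell+m\binom{\ell}{2}-\dots$ and show that the $t$-order of the section is at least $m\binom{\ell}{2}-k\ell+\dots$, which is positive precisely when $k<m(\ell-1)$. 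The cleanest route is to compute the degree $\binom{\ell}{2}$ of the trivializing section $\Delta$ along this degeneration, then verify the inequality $\sum_{i\le \ell}(a_i+b_i)>k\ell$ for $\ell$ distinct-ish lex points constrained by $\mathcal P(k,m)$.

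I expect this order count to be the hardest step, in particular getting the exact normalization of the local generator of $\CO(k,m)$ near $Y_\ell$ right.
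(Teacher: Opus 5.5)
Your necessity direction ($k<m(\ell-1)$ forces $Y_\ell$ into the base locus) is only a plan, and the count it proposes cannot produce the correct threshold.

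At a generic $Z\in Y_\ell$ the scheme is reduced, so $\CO(0,m)$ has a nonvanishing local generator there. In the chart $(x,u)$ a section $F\in A^m_{\geq k}$ is therefore just $G=F(x,xu)/\prod_{i\le\ell}x_i^k$; no factor $\Delta^m$ with powers of $x$ belongs in the normalization. Its value at $Z$ is $G_0=G|_{x_1=\cdots=x_\ell=0}$, a polynomial in $u_1,\dots,u_\ell$ and the remaining points.

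Comparing total degrees, as you propose, only sets the total $u$-degree $\le k\ell$ of $G_0$ against the degree $m\binom{\ell}{2}$ of $\prod_{i<j\le\ell}(u_i-u_j)^m$. Your quantity $m\binom{\ell}{2}-k\ell$ is positive iff $k<m(\ell-1)/2$, not $k<m(\ell-1)$. For example, with $\ell=3$ and $m=k=1$ (and $n\ge 3$) it gives nothing, although $Y_3$ lies in the base locus.

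The missing idea is a bound in each variable separately. Surviving monomials have $a_i+b_i=k$, so $\deg_{u_i}G_0\le k$. On the other hand, $G_0$ is divisible by $\prod_{i<j\le\ell}(u_i-u_j)^m$, which has degree $m(\ell-1)$ in $u_1$. The divisibility holds because $F(x,xu)\in(x_i-x_j,\,u_i-u_j)^m$, an ideal primary to a prime not containing $\prod_i x_i$, so $G$ lies in it too. The paper packages exactly this as Lemma \ref{lem:pullback to P^l}: on $Y'_\ell\cong\Hilb^\ell(E)\cong\P^\ell$ the bundle $\CO(k,m)$ restricts to $\CO(k-m(\ell-1))$, which has no nonzero sections when $k<m(\ell-1)$.

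In the sufficiency direction, your claim $Z\notin Y_{j+1}$ fails for $j=q=\ell-1$, since then $Y_{j+1}=Y_\ell\ni Z$. This case occurs throughout the range $m(\ell-1)\le k<m\ell$, including the wall $k=m(\ell-1)$, where $j=\ell-1$ is the only index. So Proposition \ref{prop:ind locus} as stated does not give regularity of $\varphi_{\ell-1}$ where you need it.

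What is true, and what the argument in its proof actually shows, is that $\varphi_j$ is regular whenever $|Z\cap E|\le j+1$: there $W_j/(W_j\cap(x))$ is spanned by the $j+1$ monomials $1,u,\dots,u^j$. You must invoke and justify this sharper form. You also still owe the identification $\varphi_j^*\CO(1)\cong\CO(j,1)$ on the domain of $\varphi_j$. It follows by taking determinants in $0\to H^0(\CO_Z(-jE))\to\C[x,y]/I_{Z,j}\to\C[x,y]/(x,y)^j\to 0$.

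With both points supplied your route works, but the paper avoids them. It reduces to $\CO(\ell-1,1)$ by multiplying $s^m$ with sections of powers of the basepoint-free $\CO(1,0)$. It then exhibits $\Delta_S$ with $S\supseteq\{(0,\ell-1),\dots,(\ell-1,0)\}$, which in the chart $(x,u)$ at a generic point of $Y'_\ell$ factors as a Vandermonde in $u_1,\dots,u_\ell$ times a generically nonzero minor.
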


Before the proof of Theorem \ref{thm:base locus} we need a lemma.

\begin{lemma}\label{lem:pullback to P^l}
    For distinct points $p_1,\dots,p_{n-\ell}\in\C^2\setminus \mathbf{0}$, we have a closed subvariety $\P^\ell\simeq Y_\ell' \subseteq \Hilb^n(\Bl_0 (\C^2))$ consisting of the subschemes that are the reduced union of the points $p_1,\dots,p_{n-\ell}$ with a closed subscheme of $E$ of length $\ell$. The pullback of $\CO(k,m)$ to $Y_\ell'$ is identified with the line bundle $\CO(k-m(\ell-1))$ on $\P^\ell$.
\end{lemma}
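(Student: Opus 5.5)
The plan is to compute the restriction of the two generators $\CO(E_n)$ and $\CO(\tfrac12 B)$ of $\Pic$ separately, and then combine them using $\CO(k,m)=\CO(-kE_n-\tfrac m2 B)$. First I will identify $Y_\ell'$ with $\P^\ell$. Since $E\simeq\P^1$, we have $\Hilb^\ell(E)=\Sym^\ell(\P^1)\simeq\P^\ell$, namely the space of degree $\ell$ divisors on $\P^1$, equivalently $\P(H^0(\CO_{\P^1}(\ell)))$. The map $\Hilb^\ell(E)\to\Hilb^n(\Bl_0(\C^2))$ given by $W\mapsto W\sqcup\{p_1,\dots,p_{n-\ell}\}$ is a closed embedding. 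Here $W$ is a subscheme of $\Bl_0(\C^2)$ contained in $E$, and it is disjoint from the fixed points because $p_i\notin E$. The reduced points contribute nothing to either divisor class, so everything reduces to understanding how the classes restrict along $\Hilb^\ell(E)\hookrightarrow \Hilb^\ell(\Bl_0(\C^2))$.

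\textbf{Step 1: the boundary divisor.} I will compute the degree of $B$ on a line in $\P^\ell$. Take the pencil $\{2q+D_t\}\cup\dots$; alternatively, and more cleanly, I will use the standard fact that $B|_{\Sym^\ell\P^1}$ is the discriminant hypersurface of binary forms of degree $\ell$. That discriminant has degree $2(\ell-1)$, and the divisor $B$ on $\Hilb^\ell(E)$ is reduced along this discriminant, since for a smooth curve the Hilbert–Chow map is an isomorphism and $B$ pulls back to the big diagonal with multiplicity one. Hence $\tfrac12 B|_{\P^\ell}=\CO(\ell-1)$. I will double-check the half-class by the case $\ell=2$: on $\Sym^2\P^1=\P^2$ the diagonal is a conic, so $\tfrac12 B$ has degree $1=\ell-1$.

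\textbf{Step 2: the exceptional divisor.} The class $\CO(E_n)$ is the determinant-type divisor $\det(\pi_*\CO(E)^{[n]})$-like class. More concretely, $E_n$ is the pullback of the divisor $\{Z:\operatorname{supp}Z\cap E\neq\emptyset\}$, which on $\Sym^n$ is the symmetrization of $E$. On the locus where all points are distinct, $\CO(E_n)$ is the descent of $\boxtimes_i\CO(E)$, so its restriction to $\Sym^\ell(E)$ is the symmetric product of the line bundle $\CO(E)|_E=\CO_{\P^1}(-1)$. The symmetric product of $\CO_{\P^1}(d)$ is $\CO_{\P^\ell}(d)$, since a point of $\P^1$ corresponds to a hyperplane in $\P^\ell$. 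Hence $\CO(E_n)|_{Y_\ell'}=\CO(-1)$. The $p_i$ contribute trivially because they lie off $E$.

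\textbf{Combining.} The two steps give
\[
\CO(k,m)|_{Y_\ell'}=\CO(-kE_n)\otimes\CO(-\tfrac m2B)|_{Y_\ell'}=\CO(k)\otimes\CO(-m(\ell-1))=\CO(k-m(\ell-1)).
\]
The main obstacle is making the identification of $\tfrac12B$ and $E_n$ precise on the nonreduced locus, in particular checking multiplicities. To handle this, I will verify the degrees on an explicit test curve in $\P^\ell$. The curve consists of a fixed $\ell-1$ general points of $E$ together with one moving point along $E$. Along this curve the moving point meets each of the $\ell-1$ fixed points transversally, so $B$ restricts with degree $2(\ell-1)$ and $\tfrac12B$ with degree $\ell-1$. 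The divisor $E_n$ restricts with degree $E\cdot E=-1$. Since $\Pic\P^\ell=\Z$, computing degrees on a line is sufficient, and this test curve is a line in $\P^\ell$.
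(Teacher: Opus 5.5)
Your proposal is correct and follows essentially the paper's route: identify $Y_\ell'\simeq\Hilb^\ell(E)\simeq\P^\ell$, restrict $B$ to get the degree-$2(\ell-1)$ discriminant, find degree $\pm1$ for the exceptional class, and combine by linearity. The only variation is in handling $E_n$, which contains $Y_\ell'$. The paper replaces $-E_n$ by the linearly equivalent divisor $D$ of schemes meeting a strict transform $\widetilde L\sim -E$, and $D$ cuts $Y_\ell'$ in a hyperplane. You instead restrict $\CO(E_n)$ directly, as the Hilbert--Chow pullback of the descent of $\CO(E)^{\boxtimes n}$, using $\CO(E)|_E\simeq\CO_{\P^1}(-1)$. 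That identification holds on the whole Hilbert scheme, and you need it there, not merely on the locus of distinct points as you phrase it: off the discriminant it would only determine the bundle on $\P^\ell$ modulo $\CO(2\ell-2)$.
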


\begin{proof}
    The subvariety $Y_\ell'$ can be identified with the Hilbert scheme of $\ell$ points on $E\simeq \P^1$ which is well-known to be isomorphic to $\P^\ell$.
    
    We compute the intersection of $Y_\ell'$ with divisors $B,D\subseteq \Hilb^n(\Bl_0 (\C^2))$ where $B$ is the locus of nonreduced schemes and $D$ defined as follows: Fix a generic line $L$ in $\C^2$ through the origin and let $\widetilde{L}\subseteq \Bl_{\mathbf{0}}(\C^2)$ denote the strict transform of $L$. The divisor $D$ is defined to be the locus of subschemes with nontrivial intersection with $\widetilde{L}$. These divisors are representatives for the first Chern classes of $\CO(0,-2)$ and $\CO(1,0)$ respectively.
    
    Both divisors $B$ and $D$ intersect $Y_\ell'$ transversely. The intersection of $Y_\ell'$ with $B$ is identified with the discriminant locus of $\P^\ell$ which has degree $2(\ell-1)$. The intersection of $Y_\ell'$ with $D$ is the locus of length $\ell$ subschemes of $E\simeq \P^1$ containing a fixed point, the intersection of $E$ with $\widetilde{L}$. This is identified with a hyperplane in $\P^\ell$. This identifies the pullbacks of $\CO(0,-2)$ and $\CO(1,0)$ with $\CO(2(\ell-1))$ and $\CO(1)$ respectively. The claimed expression for the pullback of an arbitrary line bundle $\CO(k,m)$ follows from linearity.
\end{proof}

\begin{proof}[Proof of Theorem \ref{thm:base locus}]

    Take $p_1,\dots,p_{n-\ell}\in \C^2\setminus \mathbf{0}$ distinct and let $Y_\ell'$ be as in Lemma \ref{lem:pullback to P^l}. As $p_1,\dots,p_{n-\ell}$ vary, these $Y_\ell'$'s cover a dense open subset of $Y_\ell$. Since the base locus of any line bundle is closed, it suffices to show that for any fixed $p_1,\dots,p_{n-\ell}$, $Y_\ell'$ is in the base locus of $\CO(k,m)$ if and only if $k<m(\ell-1)$. We have by Lemma \ref{lem:pullback to P^l} that the pullback of $\CO(k,m)$ to $Y_\ell'\simeq \P^\ell$ is identified with $\CO(k-m(\ell-1))$. 
    
    On the one hand, if $k<m(\ell-1)$, the restricted line bundle on $Y_\ell'$ has no nonzero sections. This implies that $Y_\ell'$ is contained in the base locus of $\CO(k,m)$ as desired.

    Now suppose $k\geq m(\ell-1)$ so that the restriction of $\CO(k,m)$ to $Y_\ell'$ does have nonzero sections. It suffices to show that there is a section of $\CO(k,m)$ whose restriction to $Y_\ell'$ is not identically zero. We claim that it suffices to exhibit such a section $s$ for the line bundle $\CO(\ell-1,1)$. Indeed, we may then exhibit such a section of $\CO(k,m(\ell-1))$ for any $k$ and $m$ with $k\geq m(\ell-1)$ by multiplying the section $s^m$ of $\CO(m(\ell-1),m)$ with an appropriately chosen section of some power of the basepoint-free line bundle $\CO(1,0)$.

    Since $\CO(\ell-1,1)$ restricts to the trivial line bundle $\CO$ on $Y_\ell'\simeq \P^\ell$ by Lemma \ref{lem:pullback to P^l}, the restriction of any section $s$ to $Y_\ell'$ is constant. We therefore need only find a section $s\in \CO(\ell-1,1)$ that is nonzero at some point of $Y_\ell'$. We claim that any  $s\in A^1_{\geq \ell-1}$ given as a determinant $\Delta_S$ for some set $S\subseteq \N^2$ containing the $\ell$ points $(0,\ell-1),(1,\ell-2),\dots,(\ell-1,0)$ will suffice. 

    To prove this claim, let us work in the same local coordinate system $(x,u)$ with $y=xu$ as in the proof of Proposition \ref{prop:ind locus}, where the exceptional curve is defined by $x=0$. For any $S$ as above, we can write $\Delta_S(x,y)$ in this coordinate system as
    \[\Delta_{S'}(x,u) = \det\left(x_i^{a_j'}u_i^{b_j'}\right) \]
    where 
    \[ S' = \{ (a_1',b_1'),\dots, (a_n',b_n') \}\]
    is the image of $S$ under the map $(a,b)\mapsto (a+b-(\ell-1),b)$. By assumption on $S$, $S'$ contains the points $(0,0),(0,1),\dots,(0,\ell-1)$, and all other $(a',b')\in S'$ have $a'>0$. 
    A general point of $Y_\ell'$ consists of the distinct points $p_1,\dots,p_{n-\ell}$ plus $\ell$ distinct points on the exceptional curve written in coordinates as $(0,u_1),\dots,(0,u_{\ell})$.
    
    At such a point, the matrix $\det\left(x_i^{a_j'}u_i^{b_j'}\right)$ is block upper triangular and hence $\Delta_{S'}$ splits into the product with the Vandermonde determinant
    \[ \Delta_{S'}(x,u) = \Delta_{S''}(x,u) \prod_{1\leq i < j \leq \ell} (u_i-u_j) \]
    where $\Delta_{S''}(x,u)$ is the analogous determinant for $S'' = S' \setminus \{(0,0),(0,1),\dots,(0,\ell-1) \}$ evaluated at the $(x,u)$-coordinates of the points $p_1,\dots,p_{n-\ell}$. This is nonzero for generic $p_1,\dots,p_{n-\ell}$, as desired.
\end{proof}

\section{Newton-Okounkov bodies and wall-crossing }\label{sec:NObodies}

In this section, we show that the wall and chamber structure on $\R^2_{\geq 0}$ corresponding to the various birational models of $\Hilb^n(\Bl_{\mathbf{0}}(\C^2))$ can be detected combinatorially using the Newton-Okounkov bodies computed in \cite{CGOT}. 

\begin{definition}\label{def:Delta(m,k)}
    For $k,m\geq 0$, let $\Delta(k,m)\subseteq\R^{2n}$, in coordinates $(a_1,\dots,a_n,b_1,\dots,b_n)$, be the subset defined by:
    \begin{enumerate}
        \item $0\leq a_1\leq a_2 \leq \cdots \leq a_n$, and
        \item for each $j=1,\dots,n,$ we have $b_j\geq \max\{k-a_j,0\} + \sum_{i=1}^{j-1}\max\{m-(a_j-a_i),0\}.$
    \end{enumerate}
\end{definition}

This convex polyhedron $\Delta(k,m)\subseteq \R^{2n}$ is the \textit{Newton-Okounkov body} of the corresponding line bundle on $\Hilb^n(\Bl_{\mathbf{0}}(\C^2))$ with respect to a certain valuation \cite{CGOT}. Since there is an identification of the sections of line bundles between the model $X_k$ or $X_{k,k+1}$ and the corresponding line bundle on $\Hilb^n(\Bl_{\mathbf{0}}(\C^2))$, one may also interpret $\Delta(k,m)\subseteq \R^{2n}$ as a Newton-Okounkov body for the corresponding birational model.

\begin{corollary}
    For any $k\geq 0$ and $m>0$, $\Delta(mk,m)$ is the Newton-Okounkov body of the line bundle $\CO(m)$ on $X_k$ with respect to the trailing term valuations on sections used in \cite{CGOT}. Similarly, for any $m_1,m_2\geq 0$, $\Delta(m_1k+m_2(k+1),m_1+m_2)$ is the Newton-Okounkov body of the line bundle $\CO(m_1,m_2)$ on $X_{k,k+1}$ with respect to the same valuation.
\end{corollary}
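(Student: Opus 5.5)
The plan is to reduce the statement to the identification of section rings in Theorems \ref{thm:main1} and \ref{thm:main2}, together with the description of Newton--Okounkov bodies of line bundles on $\Hilb^n(\Bl_{\mathbf{0}}(\C^2))$ from \cite{CGOT}. Recall the standard definition: for a line bundle $\CL$ with graded section ring $\bigoplus_d H^0(\CL^{\otimes d})$ and a valuation $\nu$, the Newton--Okounkov body is
\[
\overline{\bigcup_{d\geq 1}\tfrac{1}{d}\,\nu\bigl(H^0(\CL^{\otimes d})\setminus\{0\}\bigr)}.
\]
In our setting $\nu$ is the trailing-term exponent map under the lex order $x_1>\cdots>x_n>y_1>\cdots>y_n$. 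This object depends only on the graded ring together with the valuation, regarded as a valuation on the common function field. So once the two section rings are identified compatibly with the valuation, the bodies coincide.

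For $X_k$, I will invoke Proposition \ref{prop:image of pullback}. Pullback by $\iota_k$ gives a degree-preserving isomorphism between the homogeneous coordinate ring of $X_k(n)$ and $\bigoplus_{m} A^m_{\geq km}(n)$, and this isomorphism identifies sections of $\CO(m)$ restricted to $X_k$ with $A^m_{\geq mk}(n)$. The map $\iota_k$ is birational onto $X_k$ and is induced from an open subset of $\Hilb^n(\Bl_{\mathbf{0}}(\C^2))$. Hence the function fields agree, and the trailing-term valuation of \cite{CGOT}, which is defined via the polynomial realization in $\C[x_1,y_1,\dots,x_n,y_n]$, transports to $X_k$.

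The $d$-th graded piece for $\CO(m)$ on $X_k$ is then $A^{dm}_{\geq dmk}(n)$. By Theorem \ref{thm:HS lead terms}, its set of trailing exponents is exactly $\mathcal{P}(dmk,dm)$. It then remains to check that
\[
\overline{\bigcup_d \tfrac1d \mathcal{P}(dmk,dm)} = \Delta(mk,m).
\]
This is precisely the computation done in \cite{CGOT} for $\CO(mk,m)$ on $\Hilb^n(\Bl_{\mathbf{0}}(\C^2))$, and I will simply cite it. Alternatively, one checks directly that $\tfrac1d\mathcal{P}(dk',dm)\subseteq\Delta(k',m)$, since conditions (1) and (3) of Definition \ref{def:P(k,m)} scale homogeneously and condition (2) is dropped. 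Conversely, rational interior points of $\Delta(k',m)$, after scaling by a sufficiently divisible $d$, satisfy the strict inequalities needed, and condition (2) can be forced by the lexicographic slack for large $d$. One then takes the closure.

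For $X_{k,k+1}$ the same argument applies, using Corollary \ref{cor:image of bigraded pullback} and Theorem \ref{thm:main2}. The sections of the line bundle with bidegree $(m_1,m_2)$ on $X_{k,k+1}$, raised to the $d$-th power, form $A^{d(m_1+m_2)}_{\geq d(m_1k+m_2(k+1))}(n)$. Their trailing exponents are $\mathcal{P}(d(m_1k+m_2(k+1)),d(m_1+m_2))$, and the limiting body is $\Delta(m_1k+m_2(k+1),m_1+m_2)$.

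The main subtlety I expect is making the valuation on the model precise, namely confirming that restriction to $X_k$ commutes with taking trailing terms. This requires that the identification of Proposition \ref{prop:image of pullback} is literally the equality of polynomial subspaces inside $\C[x_1,y_1,\dots,x_n,y_n]$ and not merely an abstract ring isomorphism. That equality is what its proof provides, via cofactor expansion and evaluation at $Z_k$, so once it is noted the rest is formal.
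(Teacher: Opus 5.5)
Your proposal is correct and is essentially the paper's argument: the corollary follows by transporting the \cite{CGOT} description of the Newton--Okounkov body $\Delta(k,m)$ of $\CO(k,m)$ on $\Hilb^n(\Bl_{\mathbf{0}}(\C^2))$ through the literal identification of section rings (as subspaces of $\C[x_1,y_1,\dots,x_n,y_n]$) given by Proposition \ref{prop:image of pullback} and Corollary \ref{cor:image of bigraded pullback}, which underlie Theorems \ref{thm:main1} and \ref{thm:main2}. Your optional direct check that $\overline{\bigcup_d \tfrac1d\mathcal{P}(dk',dm)}=\Delta(k',m)$ uses the same interior-lattice-point observation as the proof of Proposition \ref{prop:NO body linearity}; for interior points, condition (2) of Definition \ref{def:P(k,m)} is simply vacuous, so no ``lexicographic slack'' is needed.
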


Our analysis of the discrete sets $\mathcal{P}(k,m)$ immediately gives several structural results for these polyhedra. First, the convex bodies vary linearly in each chamber we have found in the following sense.

\begin{proposition}\label{prop:NO body linearity}
    \begin{enumerate}
        \item For any integer $k\geq 0$ and real numbers $t_1,t_2>0$, there is an equality 
            \[ \Delta(t_1k+t_2(k+1),t_1+t_2) = t_1 \Delta(k,1)+ t_2 \Delta(k+1,1), \]
        where the right hand side is the Minkowski sum of dilations of polyhedra.
        \item For any real numbers $t_1,t_2>0$, there is an equality 
            \[ \Delta(t_1(n-1)+t_2,t_1) = t_1 \Delta(n-1,1)+t_2 \Delta(1,0). \]
    \end{enumerate}
\end{proposition}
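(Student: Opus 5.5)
The plan is to prove the polyhedral identities by transporting the discrete Minkowski decompositions of Corollary \ref{cor:Minkowski decomp} and Lemma \ref{lem: m=1 decomp} to the real setting. This goes through rational dilations and a density/closedness argument. An alternative is to redo the decomposition directly for real points; I expect the first route to be cleaner.

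\textbf{The easy inclusion.} The inclusion $t_1\Delta(k,1)+t_2\Delta(k+1,1)\subseteq \Delta(t_1k+t_2(k+1),t_1+t_2)$ is checked directly from Definition \ref{def:Delta(m,k)}. Two facts do the work. First, for $t>0$ the defining inequalities satisfy $t\Delta(k,m)=\Delta(tk,tm)$, since $\max\{tk-ta,0\}=t\max\{k-a,0\}$. Second, each right-hand side of condition (2) is a convex function of $(k,m,\aaa)$: a sum of maxima of affine functions, and positively homogeneous. Hence the right-hand side for the sum of two points, with parameters $(k+k',m+m')$, is at most the sum of the two right-hand sides, which gives superadditivity $\Delta(k,m)+\Delta(k',m')\subseteq\Delta(k+k',m+m')$. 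The same argument covers part (2), with $\Delta(1,0)$ in place of $\Delta(k+1,1)$.

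\textbf{The reverse inclusion for rational parameters.} By homogeneity, scaling both sides reduces this to $t_1=d_1/N$, $t_2=d_2/N$ with $d_1,d_2,N$ positive integers. It then suffices to treat $t_i=d_i$ integers and lattice-type points. The first step is to show that $\Delta(k,m)$ is the closure of $\bigcup_{N\ge1}\frac1N\mathcal P(Nk,Nm)$, or at least of the rational points in it. Given a rational point $p$ of $\Delta(d_1k+d_2(k+1),d_1+d_2)$ with common denominator $N$, the point $Np$ satisfies conditions (1) and (3) of Definition \ref{def:P(k,m)} at parameters $(N(d_1k+d_2(k+1)),N(d_1+d_2))$. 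Condition (2) may fail, and this is the main obstacle: the discrete set has the extra gap condition $b_{j+1}\ge b_j+m$ when $a_j=a_{j+1}$, which the polyhedron lacks. I would handle it in one of two ways. The first is to perturb $p$ slightly so that all $a_j$ are distinct, since the points with distinct $a_j$ are dense in $\Delta$; condition (2) is then vacuous. The second is to note that the inequality (3) at index $j+1$ already forces $b_{j+1}\ge b_j+m$-ish growth when $a_j=a_{j+1}$, because the $i=j$ summand contributes $m$. Either way, $Np\in\mathcal P(Nd_1k+Nd_2(k+1),N(d_1+d_2))$.

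Applying Corollary \ref{cor:Minkowski decomp} with $m_1=Nd_1$ and $m_2=Nd_2$ writes $Np$ as a sum of $Nd_1$ points of $\mathcal P(k,1)\subseteq\Delta(k,1)$ and $Nd_2$ points of $\mathcal P(k+1,1)\subseteq\Delta(k+1,1)$. Averaging within each group, using convexity of $\Delta$, gives $p\in d_1\Delta(k,1)+d_2\Delta(k+1,1)$. Part (2) follows identically from Lemma \ref{lem: m=1 decomp}, applied iteratively. Using the dilation $N$ we get $\mathcal P(N(n-1)d_1+Nd_2,Nd_1)=Nd_1\mathcal P(n-1,1)+Nd_2\mathcal P(1,0)$, which follows from Corollary \ref{cor:Minkowski decomp} on the ray together with repeated use of Lemma \ref{lem: m=1 decomp}, valid since $k\ge n-1$ throughout.

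\textbf{Passing to all real parameters and all points.} The remaining step is density plus closedness. The polyhedra $\Delta(k,1)$ and $\Delta(k+1,1)$ have the same recession cone, namely the cone of $\Delta(0,0)$. The Minkowski sum of polyhedra is therefore closed and depends continuously on $(t_1,t_2)$ in the Hausdorff sense on bounded truncations. Likewise $\Delta(s,t)$, being cut out by continuous piecewise-linear inequalities, is continuous in $(s,t)$. Approximating real $t_i$ and real points by rational ones then gives equality for all $t_1,t_2>0$.
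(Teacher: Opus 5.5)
Your argument is correct and follows the paper's own route: integer points of the dilated polyhedron with distinct $a_j$ lie in $\mathcal{P}$, so Corollary \ref{cor:Minkowski decomp} (for part (2), that corollary followed by iterated Lemma \ref{lem: m=1 decomp}), together with convexity and closedness, gives the result; you also spell out the easy inclusion, the averaging step, and the passage to irrational $t_i$, all of which the paper leaves implicit. Only your first fix for condition (2) of $\mathcal{P}$ is valid. The second fails because $b_j$ may exceed its lower bound: for $n=2$, the point $a_1=a_2=0$, $b_1=5$, $b_2=1$ lies in $\Delta(0,1)$ but violates $b_2\ge b_1+1$.
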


\begin{proof}
    For any integers $k,m$, $\mathcal{P}(k,m)\subseteq \Delta(k,m)$ contains all the interior integer points of $\Delta(k,m)$ since the only additional constraints in the definition of $\mathcal{P}(k,m)$ are conditional on an equality $a_i=a_{i+1}$. Now for any integer $t\geq 0$, $\Delta(tk,tm) = t \Delta(k,m)$ so the same argument implies that 
    \[ \frac{1}{t}\mathcal{P}(tk,tm) \subseteq \Delta(k,m) \]
    contains all the interior $\frac{1}{t}\Z$ points of $\Delta(k,m).$ These claims therefore follow from the corresponding facts about the discrete sets, Proposition \ref{prop:decomp} and Lemma \ref{lem: m=1 decomp}
\end{proof}

Proposition \ref{prop:NO body linearity} says that the Newton-Okounkov bodies vary linearly on the chambers dividing line bundles on $\Hilb^n(\Bl_{\mathbf{0}}(\C^2))$ corresponding to different birational models and base loci. One can also show that the Newton-Okounkov bodies do not vary linearly on any coarser chamber decomposition. One way to show this is to show that the combinatorial structure of the polyhedron $\Delta(k,m)$ changes when crossing over each wall. We do this by introducing a subset of $\Delta(k,m)$ that corresponds to the possible base loci $Y_\ell$.

\begin{definition}
    Let $\mathcal{Y}_\ell \subseteq \Delta(k,m)$ denote the subset of all points $(a_1,\dots,a_n,b_1,\dots,b_n)\in\Delta(k,m)$ such that at least $\ell$ of the points $(a_1,b_1),\dots,(a_n,b_n)$ satisfy $a_i+b_i = k$.
\end{definition}

Informally, we regard $\Delta(k,m)$ as a combinatorial model for the birational model of $\Hilb^n(\Bl_{\mathbf{0}}(\C^2))$ corresponding to $\CO(k,m)$, and $\mathcal{Y}_\ell \subseteq \Delta(k,m)$ as corresponding to the image of $Y_\ell$ in this model. We therefore consider the following as a combinatorial shadow of the base locus decomposition of $\Hilb^n(\Bl_{\mathbf{0}}(\C^2))$.

\begin{proposition}\label{prop:base locus analogue for NO body}
    For $k,m\geq 0$, the subset $\mathcal{Y}_\ell\subseteq \Delta(k,m)$ is empty if and only if $k<m(\ell-1)$ or $\ell>n$.
\end{proposition}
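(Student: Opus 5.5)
We will prove both directions directly from Definition \ref{def:Delta(m,k)}, by looking at which indices $i$ can satisfy $a_i+b_i=k$. The case $\ell=0$ is trivial, since $\Delta(k,m)\neq\emptyset$ and $k<-m$ never holds; so assume $\ell\geq 1$. If $\ell>n$, then $\mathcal{Y}_\ell$ is empty simply because there are only $n$ points $(a_i,b_i)$. So the content is: for $1\leq \ell\leq n$, the set $\mathcal{Y}_\ell$ is nonempty if and only if $k\geq m(\ell-1)$.

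\textbf{Necessity.} Suppose $(\aaa,\bbb)\in\Delta(k,m)$ and $a_i+b_i=k$ for some index $i$. Since $b_i\geq 0$, we get $a_i\leq k$, so $\max\{k-a_i,0\}=k-a_i=b_i$. The defining inequality of $\Delta(k,m)$ at index $i$ then forces
\[ \sum_{j=1}^{i-1}\max\{m-(a_i-a_j),0\}=0, \]
that is, $a_i-a_j\geq m$ for every $j<i$.

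Now let $i_1<\cdots<i_\ell$ be indices with $a_{i_t}+b_{i_t}=k$. Applying the observation above to $i=i_{t+1}$ and $j=i_t$ gives $a_{i_{t+1}}\geq a_{i_t}+m$. Since $a_{i_1}\geq 0$, induction gives $a_{i_\ell}\geq m(\ell-1)$. Combined with $a_{i_\ell}\leq k$, this yields $k\geq m(\ell-1)$.

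\textbf{Sufficiency.} Assume $1\leq\ell\leq n$ and $k\geq m(\ell-1)$. We exhibit an explicit point of $\mathcal{Y}_\ell$.

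For $i\leq\ell$, set $a_i=m(i-1)$ and $b_i=k-a_i$. This gives $b_i\geq 0$ because $a_i\leq m(\ell-1)\leq k$. For $i>\ell$, set $a_i=a_\ell$ and choose $b_i$ very large, for example larger than $k+nm$ plus the right-hand side of the defining inequality.

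We check the conditions of Definition \ref{def:Delta(m,k)}.
\begin{enumerate}
\item The $a_i$ are nondecreasing and nonnegative.
\item For $i\leq\ell$ and $j<i$, we have $a_i-a_j=m(i-j)\geq m$, so every term $\max\{m-(a_i-a_j),0\}$ vanishes. The required inequality therefore reads $b_i\geq k-a_i$, which holds with equality.
\item For $i>\ell$, the inequality holds by the choice of $b_i$, since its right-hand side is bounded in terms of $k,m,n$ alone.
\end{enumerate}
Thus this point lies in $\Delta(k,m)$ and has at least $\ell$ indices with $a_i+b_i=k$, so it lies in $\mathcal{Y}_\ell$. The edge case $m=0$ needs no special treatment: all gap conditions are vacuous, and the construction still works with $a_i=0$.

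No step is a serious obstacle. The only point requiring care is in the necessity argument: equality $a_i+b_i=k$ kills the entire interaction sum at index $i$. This is what forces the $a$-coordinates of the equality points to be spaced at least $m$ apart, and that spacing gives the bound $a_{i_\ell}\geq m(\ell-1)$.
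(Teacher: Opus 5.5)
Your proof is correct and follows the paper's argument: the equality $a_i+b_i=k$ forces the interaction sum at index $i$ to vanish, so the $a$-coordinates of the equality indices are spaced at least $m$ apart within $[0,k]$, and sufficiency is shown by exhibiting an explicit point. The only cosmetic difference is in that point: the paper uses the uniform choice $a_i=(i-1)m$, $b_i=\max\{k-(i-1)m,0\}$ for all $i$, whereas you pad the indices $i>\ell$ with $a_i=a_\ell$ and large $b_i$. Both constructions work.
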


\begin{proof}
    Let $(a_1,\dots,a_n,b_1,\dots,b_n)\in \Delta(k,m)$ and let $j$ be such that $a_j+b_j=k$. Then by the definition of $\Delta(k,m)$ we have 
    \[ k-a_j\geq 0,\, \text{ and } a_j-a_i \geq m \] 
    for all $i<j$. Suppose $\mathcal{Y}_\ell$ is nonempty and take $(a_1,\dots,a_n,b_1,\dots,b_n)$ and $i_1<\cdots<i_{\ell}$ such that $a_{i_1}+b_{i_1} = \cdots= a_{i_\ell}+b_{i_\ell}$. We clearly have $\ell\leq n$ as $\ell$ points out of the $n$ total satisfy the condition $a_i+b_i=k$. Furthermore, the previous analysis implies that $0\leq a_{i_1} \leq \cdots \leq a_{i_\ell}\leq k$ and the difference between consecutive terms is at least $m$. This implies that $k\geq m(\ell-1)$.
    One the other hand, if $k\geq m(\ell-1)$ and $\ell\leq n$, one checks that the point $\left( a_1,\dots,a_n,b_1,\dots,b_n\right)$ defined by 
    \[ a_i = (i-1)m, \, \text{ and } b_i = \max\{ k-(i-1)m,0 \} \]
    lies in $\mathcal{Y}_\ell$.
\end{proof}

\section{Enumerative character formulas}
\label{sec: character}

In this section we give two combinatorial formulas for the $(\C^{\times})^2$-equivariant Euler characteristics of line bundles on $\Hilb^n(\Bl_{\mathbf{0}}(\C^2))$ and the models $X_k$ and $X_{k,k+1}$. 

On the one hand, by Theorem \ref{thm:HS lead terms} we can describe the basis in the space of sections by counting points in the appropriate polytope $\CP$, see Lemma \ref{lem:  char sections} below. A line bundle in a chamber spanned by the vectors $(k,1)$ and $(k+1,1)$ corresponds to an ample line bundle  on $X_{k,k+1}$, and we prove in Corollary \ref{cor: euler char} that its equivariant Euler characteristic is also determined by $\CP$. 

On the other hand, we use equivariant localization on $X_{k,k+1}$ to compute the equivariant Euler characteristic as a sum over fixed points of torus action. Comparing these two formulas leads to a series of highly nontrivial combinatorial identities, one for each $k$. For the very first chamber with $k=0$ the corresponding identities were described in detail in \cite[Section 4.2]{CGOT}, and the other ones can be thought of as deep generalizations of these.

\subsection{Characters for global sections}

We abuse notation by writing $\CO(d)$ for the restriction of $\CO(d)$ from $\Hilb^{n+k(k+1)/2}(\C^2)$ to $X_k$, and similarly $\CO(d_1,d_2)$ for restriction of the tensor product of pullbacks of $\CO(d_1)$ and $\CO(d_2)$ from the factors $\Hilb^{n+k(k+1)/2}(\C^2)\times \Hilb^{n+(k+1)(k+2)/2}(\C^2)$ to $X_{k,k+1}$. We have seen that for $d>0$ we have
\[ H^0(X_k,\CO(d)) \simeq H^0(\Hilb^n(\Bl_{\mathbf{0}}(\C^2)),\CO(dk,d))\simeq A^d_{\geq dk}, \]
and for $d_1,d_2>0$ we have
\begin{multline}
\label{eq:comb-H0}
  H^0(X_{k,k+1},\CO(d_1,d_2)) \simeq A^{d_1+d_2}_{\geq d_1k+d_2 (k+1)} \simeq\\ H^0(\Hilb^n(\Bl_{\mathbf{0}}(\C^2)),\CO(d_1k+d_2 (k+1),d_1+d_2)).
\end{multline}

Theorem \ref{thm:HS lead terms} implies the following:

\begin{lemma}
\label{lem:  char sections}
For $d_1,d_2>0$ we have
$$
\ch H^0(X_{k,k+1},\CO(d_1,d_2))=\sum_{(\aaa,\bbb)\in \CP}q^{|\aaa|}t^{|\bbb|}
$$
where $\CP=\CP(d_1k+d_2(k+1),d_1+d_2)$.
\end{lemma}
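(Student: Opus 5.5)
The plan is to combine the identification \eqref{eq:comb-H0} with Theorem \ref{thm:HS lead terms} and a standard Gröbner-style argument. Set $k'=d_1k+d_2(k+1)$ and $m=d_1+d_2$. By \eqref{eq:comb-H0}, $H^0(X_{k,k+1},\CO(d_1,d_2))\simeq A^m_{\geq k'}(n)$. Note that this identification comes from pullback along $\iota_k\times\iota_{k+1}$ (Corollary \ref{cor:image of bigraded pullback} and Theorem \ref{thm:main2}), and this pullback is $(\C^\times)^2$-equivariant. So the character of the left side equals the $(q,t)$-graded character of $A^m_{\geq k'}(n)$, where $x_i$ has weight $q$ and $y_i$ has weight $t$. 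I would check this equivariance briefly, since the torus acts on all the Hilbert schemes by scaling $x,y$, and $\iota_k$ commutes with that action. I would also fix the convention that $\ch$ of a space of polynomials records a monomial $x^\aaa y^\bbb$ as $q^{|\aaa|}t^{|\bbb|}$.

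Next I would show that the bigraded dimension of $V=A^m_{\geq k'}(n)$ equals the number of distinct trailing exponents in each bidegree. The space $V$ is spanned by bihomogeneous elements, since it is preserved by the torus, which rescales $x_i$ and $y_i$ independently. Fix a bidegree $(\alpha,\beta)$; the component $V_{\alpha,\beta}$ is finite dimensional. Gaussian elimination with respect to the lexicographic term order produces a basis of $V_{\alpha,\beta}$ with pairwise distinct trailing monomials. The set of trailing monomials of nonzero elements of $V_{\alpha,\beta}$ is exactly the set of trailing monomials of this basis: any nonzero element is a combination of basis vectors, and its trailing term is the minimal trailing term among the basis vectors appearing with nonzero coefficient. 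Hence $\dim V_{\alpha,\beta}$ equals the number of trailing exponents of total $x$-degree $\alpha$ and $y$-degree $\beta$.

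The only subtlety is that the trailing term of a non-homogeneous element could in principle differ from those of its homogeneous components. However, trailing terms of elements of $V$ are always trailing terms of homogeneous elements of $V$, because the trailing term of $f$ is the trailing term of one of its bihomogeneous components. So by Theorem \ref{thm:HS lead terms}, the set of trailing exponents of bihomogeneous elements of $V$ is exactly $\CP(k',m)$. Summing over bidegrees then gives
\[\ch V=\sum_{(\aaa,\bbb)\in\CP}q^{|\aaa|}t^{|\bbb|}.\]

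I expect no real obstacle. The most care is needed in stating precisely that the identification \eqref{eq:comb-H0} is torus-equivariant, so that characters can legitimately be compared. Finiteness of each graded piece is clear, since $V$ sits inside a polynomial ring.
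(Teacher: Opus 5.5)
Your route is the paper's. The paper gets the lemma in one line from \eqref{eq:comb-H0} and Theorem \ref{thm:HS lead terms}. Your elimination argument is the correct, standard unpacking of that line: each bigraded piece of $A^m_{\geq k'}(n)$ has dimension equal to the number of distinct trailing exponents in that bidegree, and trailing exponents of arbitrary elements are trailing exponents of bihomogeneous ones.

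The one step that does not hold as you justify it is the equivariance you flagged. Pullback of sections along $\iota_k\times\iota_{k+1}$ is equivariant. However, for the natural linearizations, the identification of $\iota_k^*\CO(1)$ with $\CO(1)$ on $\Hilb^n(\C^2\setminus\mathbf{0})$ is not:

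\begin{itemize}
\item On the image of $\iota_k$ the tautological fiber splits as $\C[x,y]/I\oplus\C[x,y]/(x,y)^k$.
\item So the determinant picks up the constant character of $\det\C[x,y]/(x,y)^k$, namely $(qt)^{\binom{k+1}{3}}$ by Lemma \ref{lem: triangular correction}.
\item Concretely, in the proof of Proposition \ref{prop:image of pullback}, the restriction of $\Delta_S$ becomes $\pm\Delta_{S\setminus S_k}$ only after dividing by $\Delta_{S_k}$, which carries exactly this weight.
\end{itemize}

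Hence, with the linearization of $\CO(d_1,d_2)$ restricted from $\Hilb^{n+\binom{k+1}{2}}(\C^2)\times\Hilb^{n+\binom{k+2}{2}}(\C^2)$, the character is $(qt)^{f(d_1,d_2,k)}\sum_{(\aaa,\bbb)\in\CP}q^{|\aaa|}t^{|\bbb|}$. This is precisely the correction factor that appears in Theorem \ref{thm:localization-f}.

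The lemma as stated is true for the linearization twisted so that \eqref{eq:comb-H0} is equivariant for the polynomial grading; this is the convention the paper tacitly uses. Your proof is complete once you fix that convention explicitly, rather than deducing equivariance from the fact that $\iota_k$ commutes with the torus.
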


Thus we immediately obtain that the line bundle \(\CO(d_1,d_2)\) is ample on $X_{k,k+1}$ if \(d_1,d_2>0\).


\begin{example}
\label{ex:weightedcount}
    Let us compute the $(\C^{\times})^2$-character of $A^1_{\geq 2}(3)$ as a weighted sum over $\mathcal{P}(2,1)$. This is the collection of points of triples of integer points $(a_1,b_1)<(a_2,b_2)<(a_3<b_3)$ in lexicographic order with $a_i+b_i\geq 2$ for each $i$. We partition $\mathcal{P}(2,1)$ into subsets depending on the tuple $(a_1,a_2,a_3)$ and compute the weighted sum of each separately.
    \begin{enumerate}
        \item $a_1 = a_2 = a_3$. The weighted sum over such tuples is
        \[ \frac{q^3}{(1-q)(1-q^2)(1-q^3)}\cdot \left( q^6+t^3 q^3+\frac{t^6}{1-t^3} \right), \]
        where the terms in the second factor correspond to the cases $a_1 = 0,1,$ and $a_1\geq 2$.
        \item $a_1=a_2<a_3$. The weighted sum is
        \[ \frac{q}{(1-q)(1-q^2)}\cdot \frac{t}{1-q}\cdot \left( q^5+\frac{tq^4+t^3 q^2}{1-t} + \frac{t^6}{(1-t)(1-t^3)} \right). \]
        \item $a_1<a_2=a_3$. The weighted sum is
        \[ \frac{1}{1-q}\cdot \frac{t^2q}{(1-q)(1-q^2)}\cdot \left( q^4+\frac{t^2q^2+t^3 q}{1-t^2} + \frac{t^6}{(1-t^2)(1-t^3)} \right). \]
        \item $a_1<a_2<a_3$. The weighted sum is
        \[ \frac{1}{1-q}\cdot \frac{t}{1-q}\cdot \frac{t^2}{1-q}\cdot \left( \frac{q^3}{1-t}+\frac{t^2q^2+t^3q}{(1-t)(1-t^2)} + \frac{t^6}{(1-t)(1-t^2)(1-t^3)} \right). \]
        Summing these $4$ rational functions gives the $t,q$-weight generating function for $\mathcal{P}(2,1)$.   
    \end{enumerate}
\end{example}

\subsection{Homology vanishing}

We prove highed homology vanishing for ample line bundles on $X_{k,k+1}$ using the idea of Frobenius splitting \cite{FrobBook}.

\begin{proposition}
  The variety \(X_{k,k+1}\) is Frobenius split.
\end{proposition}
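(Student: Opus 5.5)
We would deduce the Frobenius splitting of $X_{k,k+1}$ from that of $\Hilb^n(\Bl_{\mathbf{0}}(\C^2))$, using the fact established above that both are birational models with the same section rings. The argument works in characteristic $p$ with everything reduced from a $\Z$-form; all constructions used (monomial ideals, $\iota_k$, the Brill--Noether description) are defined over $\Z$.

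\textbf{Step 1.} Show that $\Hilb^n(\Bl_{\mathbf{0}}(\C^2))$ is Frobenius split. The surface $\Bl_{\mathbf{0}}(\C^2)$ is a smooth toric surface, hence Frobenius split, and its anticanonical divisor is the toric boundary. Kumar--Thomsen show that for a surface $X$ split by a section $\sigma^{p-1}$ with $\sigma\in H^0(\omega_X^{-1})$, the Hilbert scheme $\Hilb^n(X)$ is split by the induced $(p-1)$-st power section. Running the same argument with the toric splitting $\sigma=xy$ (or, more precisely, the boundary section in each chart) shows $\Hilb^n(\Bl_{\mathbf{0}}(\C^2))$ is Frobenius split. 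This also follows directly from the toric-surface version in \cite{FrobBook}.

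\textbf{Step 2.} Transfer the splitting to $X_{k,k+1}$. By Theorem \ref{thm:main2}, $X_{k,k+1}$ is the $\Proj$ of the section ring $R=\bigoplus_{d_1,d_2} H^0(\Hilb^n(\Bl_{\mathbf{0}}(\C^2)),\CL_1^{d_1}\otimes\CL_2^{d_2})$ with $\CL_1=\CO(k,1)$ and $\CL_2=\CO(k+1,1)$. A Frobenius splitting of a variety $Y$ induces a splitting of the section ring of any collection of line bundles on $Y$, since $\phi(f^p s)=f\phi(s)$ respects degrees. Applying this to $Y=\Hilb^n(\Bl_{\mathbf{0}}(\C^2))$, the ring $R$ is Frobenius split as a graded ring, because it is a graded subring closed under the splitting map. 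A splitting of a finitely generated multigraded ring passes to its multi-$\Proj$, and finite generation is given by Theorem \ref{thm:infinite chambers fg}. Hence $X_{k,k+1}=\Proj R$ is Frobenius split.

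\textbf{Alternative route.} Use that $X_{k,k+1}=\mathcal{M}(n+k(k+1)/2,k)$ is smooth (Proposition \ref{prop: X as BN}) and birational to $\Hilb^n(\Bl_{\mathbf{0}}(\C^2))$. The rational map is an isomorphism in codimension one away from the loci $Y_\ell$, which have codimension $\ell\geq 2$ once $\ell$ is in the relevant range (Lemma \ref{lem: Yl irreducble}, Proposition \ref{prop:ind locus}). Since splittings are sections of $\omega^{1-p}$ and both varieties are normal, a splitting section extends across codimension-two loci and transports between the two varieties. One then checks that the splitting condition $\phi(1)=1$ holds on the common open set, which suffices.

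\textbf{Main obstacle.} The hard part is making Step 2 rigorous in characteristic $p$. We need $R$ to be the section ring of the characteristic-$p$ reduction, which requires the identifications $H^0=A^m_{\geq k}$ and the trailing-term basis to hold over $\Z$ or over $\mathbb{F}_p$ for large $p$. We would argue this via the combinatorial bases $\mathcal{P}(k,m)$, which are characteristic-free, together with flatness of the $\Z$-forms. We would then invoke the standard principle that splitting for almost all $p$ yields Kodaira-type vanishing in characteristic zero.
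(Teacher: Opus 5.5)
Your route is genuinely different from the paper's, and in outline it works. You split $Y=\Hilb^n(\Bl_{\mathbf{0}}(\C^2))$ itself, by applying Kumar--Thomsen to the smooth toric surface $\Bl_{\mathbf{0}}(\C^2)$. You then push the splitting to $X_{k,k+1}$ through its description in Theorem~\ref{thm:main2} as the bi-$\Proj$ of the section ring $R$ of $\CO(k,1)$ and $\CO(k+1,1)$. The maps $H^0(Y,\CL^{\otimes p})\to H^0(Y,\CL)$ coming from the splitting give a splitting of $R$ that divides degrees by $p$, and such a graded splitting descends to every chart $D_+(h)$ via $g/h^r\mapsto \phi(gh^{r(p-1)})/h^r$.

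The paper never passes through $\Bl_{\mathbf{0}}(\C^2)$. It takes the product of the Kumar--Thomsen splittings on $\Hilb^{N_1}(\C^2)\times\Hilb^{N_2}(\C^2)$ and tries to show that $X_{k,k+1}$ is \emph{compatibly} split via \cite[Lemma 1.1.7(ii)]{FrobBook}, using the dense open piece $U\cap X_{k,k+1}\cong\Hilb^n(\C^2\setminus\mathbf{0})$. That would buy more (compatibility with the ambient Hilbert schemes) and needs neither finite generation nor the identification with an ample model. However, near a generic point of $U\cap X_{k,k+1}$ the embedding is \'etale-locally $\{(x,y)^k\}\times\{(x,y)^{k+1}\}\times\Delta$, with $\Delta$ the diagonal of $\Hilb^n\times\Hilb^n$. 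A product splitting $\phi\boxtimes\phi$ compatibly splits a diagonal only if $\phi(fg)=\phi(f)\phi(g)$, which would make every local function a $p$-th power. So the paper's restriction step would need a different ambient splitting, whereas your argument never needs $X_{k,k+1}$ to be compatibly split in anything.

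The weak point of your write-up is the reduction to characteristic $p$, and the remedy you propose is not the right one. Theorem~\ref{thm:main2} is a characteristic-zero statement about all graded pieces of $R$. It does not spread out to $\mathbb{F}_p$ on its own, because there is no a priori uniform control of base change for $H^0$ in infinitely many degrees on the non-proper $Y$. Moreover, $H^0=A^m_{\geq k}$ builds on Haiman's characteristic-zero theory, so ``characteristic-free bases'' are not available off the shelf.

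What you actually need is a finite-type statement: an open $V\subseteq X_{k,k+1}$ with $\operatorname{codim}(X_{k,k+1}\setminus V)\geq 2$ mapping isomorphically onto an open subset of $Y$. This is the Mori-dream-space picture: $\CO(2k+1,2)$ lies in the interior of a chamber of the movable cone, so its model should be a small modification of $Y$. That statement, together with smoothness of $X_{k,k+1}$ (Proposition~\ref{prop: X as BN}), spreads out to $p\gg 0$. A splitting of $Y_p$ restricted to $V_p$ then extends uniquely to the normal variety $X_{k,k+1}$ over $\mathbb{F}_p$, and the section ring becomes unnecessary.

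That is essentially your ``alternative route'', but the codimension bound you cite for it (on $Y_\ell\subseteq Y$, via Proposition~\ref{prop:ind locus}) is on the wrong side. To transport a splitting from $Y$ to $X_{k,k+1}$ you need codimension $\geq 2$ in $X_{k,k+1}$, i.e.\ that $X_{k,k+1}\dashrightarrow Y$ contracts no divisor. Your closing remark about Kodaira-type vanishing is not needed for this proposition.
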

\begin{proof}
  The affine plane \(\C^2\) is Frobenius split and hence by \cite[Theorem 7.5.2]{FrobBook}
  \(\Hilb^n(\C^2)\) is Frobenius split as well. 
  Thus \(\Hilb^{n+\binom{k+1}{2}}(\C^2)\times \Hilb^{n+\binom{k+2}{2}}(\C^2)\) is Frobenius split and we
  show below that this splitting restricts to a Frobenius splitting of \(X_{k,k+1}\). In other words, \(X_{k,k+1}\) and \(\Hilb^{n+\binom{k+1}{2}}(\C^2)\times \Hilb^{n+\binom{k+2}{2}}(\C^2)\) have compatible Frobenius splittings.

  Let us denote by \(\phi_n\in \Hom\left(F_*\mathcal{O}_{\Hilb^n(\C^2)},\mathcal{O}_{\Hilb^n(\C^2)}\right)\) the Frobenius splitting from the above.
  The statement would follow from \cite[Lemma 1.1.7(ii)]{FrobBook} if we find an open subset \(U\subset \Hilb^{n+\binom{k+1}{2}}(\C^2)\times \Hilb^{n+\binom{k+2}{2}}(\C^2)\) such that \(U\cap X_{k,k+1}\) is dense in \(X_{k,k+1}\) and the Frobenius splitting
  \(\phi_{n+\binom{k+1}{2}}\boxtimes \phi_{n+\binom{k+2}{2}}\) restricts to a Frobenius splitting of \(U\cap X_{k,k+1}\).
  Let us define the sought after open set \(U\) as product \(U_1\times U_2\) where \(U_1\subset \Hilb^{n+\binom{k+1}{2}}(\C^2)\),
  \(U_2\subset \Hilb^{n+\binom{k+2}{2}}(\C^2)\) are defined as follows. An ideal \(I\) is in \(U_1\), \(U_2\) respectively, if
  the length of \((\C[x,y]/I)\otimes \C[x,y]_{(x,y)}\) is at most \(k(k+1)/2\) and \((k+1)(k+2)/2\) respectively.

  The above mentioned conditions of \cite[Lemma 1.1.7(ii)]{FrobBook} are satisfied because on the one hand \(U\cap X_{k,k+1}=\Hilb^n(\C^2\setminus (0,0))\).
  On the other hand the Frobenius splitting \(\phi_n\) is obtained from the canonical Frobenius splitting of \(\Sym^n(\C^2)\) by the pull-back along the Hilbert-Chow map \(\HC_n\), crepantness of \(\HC_n\) is crucial here. The canonical Frobenius splitting of
  \(\Sym^n(\C^2)\) is the \(S_n\)-equivariant splitting \(\phi^{\boxtimes n}\) where \(\phi\) is the canonical splitting of
  \(\C^2\).

  Hence the \(S_{n+\binom{k+1}{2}}\times S_{n+\binom{k+2}{2}}\)-equivariant splitting \(\phi^{\boxtimes n+\binom{k+1}{2}}\boxtimes \phi^{\boxtimes n+\binom{k+2}{2}}\) restricts to the \(S_n\)-equivariant splitting \(\phi^{\boxtimes n}\) on 
  \[\left[\HC_{n+\binom{k+1}{2}}\times \HC_{n+\binom{k+2}{2}}(U)\right]\cap X_{k,k+1}=\Sym^n(\C^2\setminus (0,0)).
  \] 
  Moreover, the restriction of \(\HC_{n+\binom{k+1}{2}}\times \HC_{n+\binom{k+2}{2}}\) to $U$ coincides with  the Hilbert-Chow map
  \(\HC_n\) for \(\C^2\setminus (0,0)\):
  \[\HC^*_{n+\binom{k+1}{2}}\left(\phi^{\boxtimes n+\binom{k+1}{2} }\right)\boxtimes \HC^*_{n+\binom{k+2}{2}}\left(\phi^{\boxtimes n+\binom{k+2}{2}}\right)\mid_{U\cap X_{k,k+1}}=
  \HC^*_{n}\left(\phi^{\boxtimes n }\right).\]
\end{proof}

\begin{corollary}
  For \(d_1,d_2>0\) we have \( H^{>0}(X_{k,k+1},\CO(d_1,d_2))=0\).
\end{corollary}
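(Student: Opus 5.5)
The plan is to use the Frobenius splitting just established together with the standard vanishing theorem for Frobenius split varieties: if a projective (or proper over an affine base) variety $Y$ is Frobenius split and $\mathcal{L}$ is ample, then $H^{>0}(Y,\mathcal{L})=0$ \cite[Theorem 1.2.8]{FrobBook}. Here $X_{k,k+1}$ is not projective. It is proper over an affine variety, namely the image of $X_{k,k+1}$ under the product of Hilbert--Chow maps into $\Sym^{n+\binom{k+1}{2}}(\C^2)\times\Sym^{n+\binom{k+2}{2}}(\C^2)$, and its global functions are $A^0(n)$. I would therefore use the relative version: for $f\colon Y\to S$ proper with $S$ affine, $Y$ Frobenius split and $\mathcal{L}$ $f$-ample, $H^{>0}(Y,\mathcal{L})=0$. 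This version follows from the same argument. The splitting gives an injection $H^i(Y,\mathcal{L})\hookrightarrow H^i(Y,\mathcal{L}^{p^e})$ for all $e$, and the right-hand side vanishes for $e\gg0$ by relative Serre vanishing, since $S$ is affine.

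The first step is to confirm that $\CO(d_1,d_2)$ is relatively ample for $d_1,d_2>0$. On each factor, $\CO(1)$ on $\Hilb^N(\C^2)$ is relatively ample over $\Sym^N(\C^2)$, since Haiman's description realizes $\Hilb^N(\C^2)$ as $\Proj$ of $\bigoplus_m A^m(N)$ over $A^0(N)$. Hence $\CO(1)\boxtimes\CO(1)$ is relatively ample on the product, and its restriction to the closed subvariety $X_{k,k+1}$ stays relatively ample. The remark after Lemma \ref{lem:  char sections} gives the same conclusion from the section-ring side: Theorem \ref{thm:main2} identifies $X_{k,k+1}$ with $\Proj$ of a ring generated in degrees $(1,0)$ and $(0,1)$ over $A^0$.

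The second step is to carry out the reduction to positive characteristic, which I expect to be the main obstacle. Frobenius splitting is a characteristic $p$ notion, so we must spread $X_{k,k+1}$, the line bundle, and the map to the affine base out over a finitely generated $\Z$-algebra. We then need the splitting argument from the previous proposition to work for all sufficiently large $p$: Frobenius splitting of $\C^2$ and of $\Hilb^N$, compatibility with the dense open set $U\cap X_{k,k+1}$, and the lemma from \cite{FrobBook}. From this we deduce $H^{>0}=0$ on the reductions for all $p\gg0$. Upper semicontinuity of cohomology in flat families then transfers the vanishing to the generic fiber and hence to characteristic $0$. Flatness holds after shrinking the base. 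The delicate point is that $X_{k,k+1}$, defined as a closure, should specialize to the corresponding closure mod $p$. This follows from generic flatness, or from the Brill--Noether description in Proposition \ref{prop: X as BN}, which is defined over $\Z$.

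With these two steps the corollary follows. An alternative worth keeping in reserve is to use the Frobenius splitting to also get the relative Kawamata--Viehweg-type statement directly from the smoothness of $X_{k,k+1}$ together with the structure of the canonical bundle. However, the splitting route above is the intended one and requires only relative ampleness and the characteristic-$p$ transfer.
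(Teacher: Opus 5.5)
Your proposal is correct and follows the paper's route: the paper's proof combines the Frobenius splitting of $X_{k,k+1}$, the ampleness of $\CO(d_1,d_2)$ for $d_1,d_2>0$, and \cite[Theorem 1.2.8]{FrobBook}, whose argument applies to schemes proper over an affine base, as you observe. Your treatment of relative ampleness and of the passage from characteristic $p$ back to characteristic $0$ makes explicit what the paper's one-line proof leaves implicit. In that last step, apply generic freeness and base change to the coherent sheaves $R^if_*\CO(d_1,d_2)$ on the affine base rather than semicontinuity, since the family itself is not proper over the spreading-out base.
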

\begin{proof} Since \(\CO(d_1,d_2)\) is ample for \(d_1,d_2>0\) the statement follows from \cite[Theorem 1.2.8]{FrobBook}.
  \end{proof}

\begin{corollary}
\label{cor: euler char}
For $d_1,d_2>0$ we have
$$
\chi_{(\C^{\times})^2}(X_{k,k+1},\CO(d_1,d_2))=\ch H^0(X_{k,k+1},\CO(d_1,d_2))=\sum_{(\aaa,\bbb)\in \CP}q^{|\aaa|}t^{|\bbb|}
$$
where $\CP=\CP(d_1k+d_2(k+1),d_1+d_2)$.
\end{corollary}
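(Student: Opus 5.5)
The statement combines two facts established just above, so I would give a short two-step argument.

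First, I would show that the equivariant Euler characteristic equals the character of global sections. By definition,
\[ \chi_{(\C^{\times})^2}(X_{k,k+1},\CO(d_1,d_2))=\sum_{i\geq 0}(-1)^i \ch H^i(X_{k,k+1},\CO(d_1,d_2)). \]
This is a well-defined element of the ring of formal Laurent series (rational functions) in $q,t$. The reason is that $X_{k,k+1}$ is proper over the affine quotient $\Spec$ of its ring of global functions, and the torus acts with all weights in the positive quadrant, inherited from $\Hilb(\C^2)$. Hence each cohomology group is a graded vector space with finite-dimensional weight spaces.

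By the preceding corollary, obtained from the Frobenius splitting of $X_{k,k+1}$ together with ampleness of $\CO(d_1,d_2)$ for $d_1,d_2>0$, we have $H^{>0}(X_{k,k+1},\CO(d_1,d_2))=0$. The alternating sum therefore collapses to $\ch H^0$, which gives the first equality.

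Second, I would invoke Lemma \ref{lem:  char sections}, which identifies $\ch H^0(X_{k,k+1},\CO(d_1,d_2))$ with the weighted count over $\CP(d_1k+d_2(k+1),d_1+d_2)$. This lemma rests on the isomorphism \eqref{eq:comb-H0}, which comes from Theorem \ref{thm:main2}, and on the trailing-term basis of Theorem \ref{thm:HS lead terms}. One point needs care: the isomorphism $H^0\simeq A^{d_1+d_2}_{\geq d_1k+d_2(k+1)}(n)$ must be torus-equivariant with the stated $q,t$ weights. This holds because $\iota_k\times\iota_{k+1}$ is $(\C^\times)^2$-equivariant, and the pullback map of Corollary \ref{cor:image of bigraded pullback} preserves the $(x,y)$-bigrading.

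The main subtlety I expect is a normalization issue. The identification of $H^0(X_{k,k+1},\CO(d_1,d_2))$ with polynomials may carry a weight shift, for instance from the factor $\Delta_{S_k}$ used in the evaluation at $Z_k$ in the proof of Proposition \ref{prop:image of pullback}. I would check that the linearization of $\CO(1)$ on $\Hilb^{N}(\C^2)$ is chosen so that $\Delta_S$ has weight $q^{|\aaa|}t^{|\bbb|}$, and that the torus weight contributed at $Z_k$ is normalized away, as the convention of Lemma \ref{lem:  char sections} implicitly assumes. With that convention fixed, the corollary follows immediately.
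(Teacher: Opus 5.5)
Your proposal is correct and matches the paper's argument, which is left implicit there: the corollary follows by combining the vanishing $H^{>0}(X_{k,k+1},\CO(d_1,d_2))=0$ (from the Frobenius splitting together with ampleness) with Lemma~\ref{lem:  char sections}. Your normalization caveat is also the right one. The paper's convention is the linearization under which $H^0$ is identified with $A^{d_1+d_2}_{\geq d_1k+d_2(k+1)}(n)$ carrying its natural grading, and its discrepancy with the linearization restricted from the Hilbert schemes is exactly the factor $(qt)^{f(d_1,d_2,k)}$ that reappears in Theorem~\ref{thm:localization-f}.
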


\subsection{Localization}

We can compute the Euler characteristics of line bundles on $X_{k,k+1}$ using localization. Recall that $X_{k,k+1}$ is a smooth closed subscheme of the (singular) nested Hilbert scheme
$$
X_{k,k+1}\subset \Hilb^{n+\binom{k+1}{2},n+\binom{k+2}{2}}.
$$
By Proposition \ref{prop: X as BN} we can realize $X_{k,k+1}$ as the set of pairs of ideals $\C[x,y]\supset I\supset J$ such that 
$\dim \C[x,y]/I=n+\binom{k+1}{2},\dim \C[x,y]/J=n+\binom{k+2}{2}$ and 
$$
xI\subset J,\ yI\subset J.
$$
Equivalently, we can consider  two vector spaces:
$\C[x,y]/I=V_I, \C[x,y]/J=V_J$.
We have the following diagram:
\begin{equation}
\label{eq: quiver}
\begin{tikzcd}
\C \arrow{r}{\iota} & V_J \arrow[twoheadrightarrow]{r}{\pi} & V_I \arrow[bend left]{l}{A} \arrow[bend right,swap]{l}{B}  
\end{tikzcd}
\end{equation}
where $\pi:V_J\to V_I$ is the natural projection, $\iota(1)=1$ and $A,B$ are the operators of multiplication by $x,y$ respectively.  Note that $\ker \pi=I/J$. We also add a formal map $0=\jmath:V_I\to \C$. The operators $A,B$ satisfy the {\em moment map equation}
$$
\mu(A,B)=A\pi B-B\pi A+\iota\jmath=0
$$
which expresses the fact that $x$ and $y$ commute, and {\em stability condition} which says that $A\pi, B\pi$ applied to the image of $\jmath$ generate $V_J$. One can check \cite{GGS} that $X_{k,k+1}$ is isomorphic to the moduli space of diagrams \eqref{eq: quiver} satisfying the moment map equation and stability condition, up to changes of basis in $V_J$ and $V_I$.  

Two-dimensional torus $(\C^{\times})^2$ acts on $\C^2$ by rescaling the coordinates, this action extends to the nested Hilbert scheme and to $X_{k,k+1}$. In terms of the diagram \eqref{eq: quiver}, the action does not change $\jmath$ and $\pi$ and rescales $A$ by $q^{-1}$ and $B$ by $t^{-1}.$  
The fixed points of the torus action are labeled by the pairs of Young diagrams $\lambda\subset \mu$ such that $|\lambda|=n+\binom{k+1}{2}$, $|\mu|=n+\binom{k+2}{2}$ and no two boxes in the skew diagram $\mu\setminus \lambda$ are next to each other.  Let us denote the set of such pairs of diagrams by \(D(n,k)\):
\[D(n,k)=(X_{k,k+1})^{(\C^\times)^2}.\]

We the following description of the tangent space to $X_{k,k+1}$:

\begin{theorem}\cite[Theorem 2.23]{GGS}
Consider the $(\C^{\times})^2$-equivariant complex 
\begin{multline}
\label{eq: tangent complex}
\mathrm{Lie}(P)\rightarrow \\q^{-1}\Hom(V_I,V_J)\oplus t^{-1}\Hom(V_{I},V_J)\oplus \Hom(\C,V_J)\oplus (qt)^{-1}\Hom(V_I,\C)\xrightarrow{d\mu}\\
 (qt)^{-1}\Hom(V_I,V_J)
\end{multline}
where $P$ is the group of block-triangular matrices with blocks of size $(n+\binom{k+1}{2},k+1)$ and $\mathrm{Lie}(P)$ its Lie algebra; the first arrow is given by the differential of the action of $P$ and the second arrow is the differential of $\mu$. Then:
\begin{enumerate}
\item[(a)] On stable locus the first map is injective and the second map is surjective.
\item[(b)] The tangent space to $X_{k,k+1}$ is given by the middle cohomology of \eqref{eq: tangent complex}.
\end{enumerate}
\end{theorem}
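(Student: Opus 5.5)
The plan is to realize $X_{k,k+1}$ as a free quotient of a smooth variety and then read off its tangent space. Set $N_I=n+\binom{k+1}{2}$ and $N_J=n+\binom{k+2}{2}$. Fix the vector spaces $V_J$, $V_I$ and the surjection $\pi$, and let $M$ be the space of data $(A,B,\iota,\jmath)$ appearing in the middle term of \eqref{eq: tangent complex}. The group preserving $\ker\pi$ is the parabolic $P\subset GL(V_J)$, with blocks of sizes $N_I$ and $N_J-N_I=k+1$; it acts on $M$, and on $V_I$ through the quotient. By the description recalled above (and Proposition \ref{prop: X as BN}), $X_{k,k+1}$ is the quotient $\mu^{-1}(0)^{\mathrm{st}}/P$. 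The complex \eqref{eq: tangent complex} is the linearization of this presentation at a point $p=(A,B,\iota,\jmath)$:
\begin{itemize}
\item the first map is $\xi\mapsto(\xi A-A\xi,\ \xi B-B\xi,\ \xi\iota,\ -\jmath\xi)$, with $\xi$ acting on $V_I$ through $\pi$;
\item the second map is $d\mu_p$.
\end{itemize}
The equivariant weights are forced by the torus scaling $A\mapsto q^{-1}A$ and $B\mapsto t^{-1}B$, which makes $\mu$ of weight $(qt)^{-1}$.

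First I would prove injectivity of the first map on the stable locus. Suppose $\xi\in\mathrm{Lie}(P)$ commutes with $A\pi$ and $B\pi$ and kills $\iota$. Then $\ker\xi$ contains $\iota(1)$ and is stable under $A\pi$ and $B\pi$. By stability these operators generate $V_J$ from $\iota(1)$, so $\xi=0$. The same argument at the group level shows that $P$ acts freely on the stable locus.

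Next I would prove surjectivity of $d\mu_p$, which I expect to be the main obstacle. I would dualize using the trace pairing, identifying the dual of $\Hom(V_I,V_J)$ with $\Hom(V_J,V_I)$. An element $\eta$ orthogonal to the image of $d\mu_p$ must satisfy:
\begin{itemize}
\item from varying $A$ and $B$: $\pi B\eta=\eta B\pi$ and $\pi A\eta=\eta A\pi$, up to the twists coming from $\pi$;
\item from varying $\iota$: $\eta\iota=0$ on the image of $\iota$;
\item from varying $\jmath$: a condition involving $\iota$.
\end{itemize}
I would then show that $\eta$ vanishes on the submodule generated by $\iota(1)$ under $A\pi$ and $B\pi$, which is all of $V_J$ by stability. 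This would give $\eta=0$. The delicate point is the bookkeeping with $\pi$: the relations only intertwine the operators through $\pi$, and the block structure of $P$ must be used carefully. If a direct argument stalls, I would instead import the argument for nested Hilbert schemes from \cite{GGS}.

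Given (a), I would conclude as follows. The zero locus $\mu^{-1}(0)^{\mathrm{st}}$ is smooth, with tangent space $\ker d\mu_p$, of dimension $\dim M-\dim\Hom(V_I,V_J)$. The free action makes $\mu^{-1}(0)^{\mathrm{st}}\to X_{k,k+1}$ a principal $P$-bundle, so the tangent space of $X_{k,k+1}$ is $\ker d\mu_p/\mathrm{im}(\mathrm{Lie}\,P)$, which is the middle cohomology; this proves (b). Finally, I would check that the Euler characteristic of the complex equals $2n$, consistent with $\dim X_{k,k+1}=2n$. This check also confirms that the open stable locus is all of $X_{k,k+1}$.
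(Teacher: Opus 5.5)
The paper gives no argument for this statement: it quotes \cite[Theorem 2.23]{GGS}, and it also imports the identification of $X_{k,k+1}$ with the quiver moduli space. Your proposal supplies the standard linearization proof, and it is correct. What it adds is that it shows both halves of (a) use only the cyclicity of $\iota(1)$ under $A\pi$ and $B\pi$. Neither $\mu=0$ nor the block structure of $P$ enters. Your injectivity argument is exactly right, and so is the count $\chi=N_I+N_J-(k+1)^2=2n$.

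Surjectivity of $d\mu$ is not the obstacle you expect. Write $d\mu(a,b,i,j)=a\pi B+A\pi b-b\pi A-B\pi a+i\jmath+\iota j$ and pair it with $\eta\in\Hom(V_J,V_I)$ via the trace. This gives precisely:
\begin{itemize}
\item $\pi B\eta=\eta B\pi$ (from $a$);
\item $\eta A\pi=\pi A\eta$ (from $b$);
\item $\jmath\eta=0$ (from $i$);
\item $\eta\iota=0$ (from $j$).
\end{itemize}
You attributed the last two conditions to the wrong variables, and no further twists by $\pi$ occur. The first two relations say that $\eta(v)=0$ implies $\eta(A\pi v)=\eta(B\pi v)=0$. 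So $\ker\eta$ is an $A\pi,B\pi$-stable subspace containing $\iota(1)$, hence all of $V_J$ by stability, and $\eta=0$.

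Two smaller remarks on (b). First, a free action of the non-reductive group $P$ does not by itself give a principal bundle. Here, once one knows $\jmath=0$ on the stable locus (by the usual ADHM argument applied to $A\pi,B\pi,\iota,\jmath\pi$), the map $\mu^{-1}(0)^{\mathrm{st}}\to X_{k,k+1}$ is the bundle of frames of $\CV_J$ adapted to the subbundle $\ker\pi$. That bundle is Zariski-locally trivial, which gives what you need. Second, the Euler characteristic computation is only a consistency check. It does not show that stable points exhaust $X_{k,k+1}$: in the quiver description every point is stable by definition, and matching that moduli space with the closure $X_{k,k+1}$ is part of the input from \cite{GGS}.
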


\begin{corollary}
At a fixed point $(\lambda,\mu)$, the character of the cotangent space $\Omega_{\lambda,\mu}X_{k,k+1}$ equals:
\begin{equation}
\label{eq: localization weight}
-(B_{\mu}-B_{\lambda})(B_{\mu}^{\ast}-B_{\lambda}^{\ast})-(1-q)(1-t)B_{\lambda}B_{\mu}^{\ast}+qtB_{\lambda}+B_{\mu}^{\ast}.
\end{equation}
where 
$B_{\lambda}=\sum_{\sq\in \lambda}\sq,
B_{\lambda}^{\ast}=\sum_{\sq\in \lambda}\sq^{-1}$ and similarly for $B_{\mu},B_{\mu}^{\ast}$.
\end{corollary}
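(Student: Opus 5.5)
The plan is to read off the character of the tangent space at $(\lambda,\mu)$ from the complex \eqref{eq: tangent complex} by taking the alternating sum in equivariant $K$-theory, simplify, and then dualize. By part (a) of the theorem of \cite{GGS}, on the stable locus the first map is injective and the second is surjective. So in the representation ring of $(\C^\times)^2$ we have
\[ T_{\lambda,\mu}X_{k,k+1} = (\text{middle term}) - \mathrm{Lie}(P) - (\text{last term}), \]
and $\Omega_{\lambda,\mu}X_{k,k+1}$ is the dual of this, obtained by substituting $q\mapsto q^{-1}$, $t\mapsto t^{-1}$.

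First I fix characters at the fixed point. At the monomial pair $(\lambda,\mu)$ the spaces $V_I=\C[x,y]/I$ and $V_J=\C[x,y]/J$ have monomial bases indexed by the boxes. With the convention that $A,B$ are rescaled by $q^{-1},t^{-1}$ as stated, their characters are $V_I=B_\lambda$ and $V_J=B_\mu$. I would check this convention on the trivial case $k=-1$-like situation, or just on $\Hilb^1$, to make sure the signs of the exponents are right. Write $K=\ker\pi=I/J$, so $K=B_\mu-B_\lambda$ and $V_J=V_I+K$. For $\mathrm{Lie}(P)$, the block-triangular condition means $P$ preserves $K\subset V_J$. Hence $\mathrm{Lie}(P)$ consists of all endomorphisms of $V_J$ except $\Hom(K,V_I)$, with character $V_JV_J^*-V_IK^*$. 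Using $\Hom(U,W)=WU^*$, the middle term is
\[ (q^{-1}+t^{-1})V_JV_I^*+V_J+(qt)^{-1}V_I^*, \]
and the last term is $(qt)^{-1}V_JV_I^*$.

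Next comes the algebra. Since $q^{-1}+t^{-1}-(qt)^{-1}=1-(1-q^{-1})(1-t^{-1})$, we get
\[ T=-(1-q^{-1})(1-t^{-1})V_JV_I^*+V_J+(qt)^{-1}V_I^* + \big(V_JV_I^*-V_JV_J^*+V_IK^*\big). \]
Substituting $V_J^*=V_I^*+K^*$ gives
\[ V_JV_I^*-V_JV_J^*+V_IK^* = -V_JK^*+V_IK^* = -KK^*. \]
Therefore
\[ T=-KK^*-(1-q^{-1})(1-t^{-1})B_\mu B_\lambda^*+B_\mu+(qt)^{-1}B_\lambda^*. \]
Dualizing yields exactly \eqref{eq: localization weight}, since $KK^*=(B_\mu-B_\lambda)(B_\mu^*-B_\lambda^*)$ is self-dual.

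The only real obstacle is bookkeeping of conventions. One issue is which block of $P$ is missing (upper versus lower triangular). The other is whether the torus acts on $V_J$ by $B_\mu$ or by $B_\mu^*$. A wrong choice of either would produce a formula differing by a dual or by a $KK^*$-type correction. I would pin both down by a sanity check: for $n=0,k=0$, the space $X_{0,1}=\Hilb^{0,1}_0$ is a point with $\lambda=\emptyset$, $\mu=\{1\}$, and the formula $-1+1=0$ gives zero-dimensional cotangent space. For $X_0=\Hilb^1$-type cases, the resulting expression should have rank $2n$ and positive weights $q,t$ as appropriate.
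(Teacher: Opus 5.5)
Your proposal is correct and follows the paper's argument. The paper likewise takes the alternating sum of the complex \eqref{eq: tangent complex}, writing the cotangent class directly as $-\mathrm{Lie}(P)^*+(q+t-qt)\CV_I\CV_J^*+qt\CV_I+\CV_J^*$, and uses $\ch\mathrm{Lie}(P)=(\CV_J-\CV_I)(\CV_J^*-\CV_I^*)+\CV_I^*\CV_J$, which equals your $V_JV_J^*-V_IK^*$; the only difference is that you compute the tangent class first and dualize at the end, which is purely cosmetic.
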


\begin{proof}
Let $\CV_I,\CV_J$ denote the tautological bundles on $X_{k,k+1}$ with fibers $V_I,V_J$ respectively. At a fixed point the characters of $\CV_I$ and $\CV_J$ are respectively given by $B_{\lambda}$ and $B_{\mu}$. 
Then
$$
\Omega X_{k,k+1}=-\mathrm{Lie}(P)^{\ast}+
 (q+t-qt)\CV_I\CV_J^{\ast}+qt\CV_I+\CV_J^{\ast}.
$$
We have 
$$
\ch\mathrm{Lie}(P)\simeq  (\CV_J-\CV_I)(\CV_J^{\ast}-\CV_I^{\ast})+ \CV_I^{\ast}\CV_J,
$$
so
$$
\ch\mathrm{Lie}(P)^{\ast}\simeq (\CV_J-\CV_I)(\CV_J^{\ast}-\CV_I^{\ast})\oplus \CV_I\CV_J^{\ast},
$$
and the result follows. 
\end{proof}

\begin{example}
For $n=2, k=0$ we get $|\lambda|=2$, $|\mu|=3$. For $\lambda=(2),\mu=(3)$ we get
$$
B_{\lambda}=1+q,\ B_{\mu}=1+q+q^2
$$
and \eqref{eq: localization weight} equals
$$
-q^2q^{-2}-(1-q)(1-t)(1+q)(1+q^{-1}+q^{-2})+qt(1+q)+(1+q^{-1}+q^{-2})=q+q^2+\frac{t}{q}+\frac{t}{q^2}.
$$
\end{example}

\begin{lemma}
\label{lem: triangular correction}
The $(\C^{\times})^2$-character of $\det\C[x,y]/(x,y)^k$ equals $(qt)^{\binom{k+1}{3}}$.
\end{lemma}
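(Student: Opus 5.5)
The plan is to compute the character directly from the monomial basis. Since $(x,y)^k$ is a monomial ideal, the quotient $\C[x,y]/(x,y)^k$ has the $(\C^\times)^2$-eigenbasis
\[ \{x^ay^b \,:\, a,b\geq 0,\ a+b<k\}, \]
which is exactly the set of boxes of the staircase diagram $S_k$ from the proof of Proposition \ref{prop:image of pullback}. With the convention used for $B_\lambda$ above, the box $(a,b)$ contributes the weight $q^at^b$, so the character of the quotient is $B_{S_k}=\sum_{a+b<k}q^at^b$.

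The determinant of a torus representation has as its character the product of the weights of an eigenbasis. Therefore
\[ \ch \det \C[x,y]/(x,y)^k \;=\; \prod_{a+b<k} q^a t^b \;=\; q^{\sum_{a+b<k} a}\; t^{\sum_{a+b<k} b}. \]
It remains to evaluate the two exponents. By the symmetry $(a,b)\mapsto(b,a)$ of the staircase, they are equal, so it suffices to compute $\sum_{a+b<k}a$.

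To compute this sum, group the monomials by total degree $d=a+b$, for $d=0,\dots,k-1$. Within degree $d$, the exponent $a$ runs over $0,\dots,d$, contributing $\binom{d+1}{2}$. The hockey-stick identity then gives
\[ \sum_{a+b<k}a \;=\; \sum_{d=0}^{k-1}\binom{d+1}{2} \;=\; \binom{k+1}{3}. \]
Combining this with the symmetry, the character is $(qt)^{\binom{k+1}{3}}$, as claimed.

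The argument has no real obstacle. The only point requiring care is the sign convention for the weights. If the torus acts on functions with inverse weights, as in the rescaling of $A$ and $B$ by $q^{-1},t^{-1}$, the same computation yields $(qt)^{-\binom{k+1}{3}}$. I would therefore state the result in the convention where $\C[x,y]/I$ has character $B_\lambda$, which matches the use of $B_\lambda$ in \eqref{eq: localization weight}.
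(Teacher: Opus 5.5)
Your proof is correct and matches the paper's argument. Both compute the exponent of $q$ (equal to that of $t$ by symmetry) as the sum of $a$ over the monomial basis $\{x^ay^b : a+b<k\}$ and evaluate it as $\sum_{j=1}^{k}\binom{j}{2}=\binom{k+1}{3}$; the paper groups monomials by fixed $b$ rather than by total degree, which only reorders the same binomial terms.
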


\begin{proof}
Clearly, the exponents of $q$ and $t$ are the same, and both are equal to
$$
(0+\ldots+(k-1))+(0+\ldots+(k-2))+\ldots+0=$$
$$
\binom{k}{2}+\binom{k-1}{2}+\ldots+\binom{1}{2}=\binom{k+1}{3}.
$$
\end{proof}

For a given Young diagram \(\lambda\) we denote by \(c_{q,t}(\lambda)\) the product of the \(q,t\)-contents of the diagram:
\(c_{q,t}(\lambda)=\prod_{(r,s)\in \lambda } q^rt^s\). Respectively,  for a vector space \(V\) with
\((\C^\times)^2\)-action  we denote \(e_{q,t}(V)\) is the Euler class of this space:
 \(e_{q,t}(V)=\prod_{(r,s)\in \mathrm{wt}(V)}(1-q^rt^s)\) where \(\mathrm{wt}(V)\) is the set of corresponding equivariant weights.
\begin{theorem}\label{thm:localization-f} For \(d_1,d_2>0\) 
  we have
  \[(qt)^{f(d_1,d_2,k)}\sum_{(\aaa,\bbb)\in \mathcal{P}} q^{|\aaa|}t^{|\bbb|}=\sum_{(\lambda,\mu)\in D(n,k)}\frac{c_{q,t}(\lambda)^{d_1}c_{q,t}(\mu)^{d_2}}{e_{q,t}(\Omega_{\lambda,\mu})}\]
  where \(\mathcal{P}=\mathcal{P}(d_1k+d_2(k+1),d_1+d_2)\) and \(f(d_1,d_2,k)=\binom{k+1}{3}d_1+\binom{k+2}{3}d_2.\)
\end{theorem}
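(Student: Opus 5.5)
Combine Corollary \ref{cor: euler char} with the Atiyah--Bott / Thomason localization formula for the smooth variety $X_{k,k+1}$. By Corollary \ref{cor: euler char}, the left-hand sum $\sum_{(\aaa,\bbb)\in\mathcal{P}}q^{|\aaa|}t^{|\bbb|}$ equals $\chi_{(\C^\times)^2}(X_{k,k+1},\CO(d_1,d_2))$. Since $X_{k,k+1}$ is smooth (Proposition \ref{prop: X as BN}) and has finitely many torus fixed points $D(n,k)$, localization gives
\[ \chi_{(\C^\times)^2}(X_{k,k+1},\CL)=\sum_{(\lambda,\mu)\in D(n,k)}\frac{\ch \CL|_{(\lambda,\mu)}}{\prod_{w\in \mathrm{wt}(\Omega_{\lambda,\mu})}(1-w)} .\]
The denominator is $e_{q,t}(\Omega_{\lambda,\mu})$, computed by \eqref{eq: localization weight}. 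Strictly speaking $X_{k,k+1}$ is not proper, so I would justify localization by the standard argument: it is proper over the affine quotient (projective over $\Sym$), the torus has isolated fixed points with all weights in an attracting half-space, and both sides are interpreted as rational functions or expansions in the same direction. This is exactly as in \cite{CGOT} for $k=0$.

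The main work is identifying the fiber characters of $\CO(d_1,d_2)$ at $(\lambda,\mu)$. On $\Hilb^N(\C^2)$ we have $\CO(1)=\det \CV$ (the tautological bundle $\CV=\C[x,y]/I$), with the sign conventions of Haiman's identification $H^0(\CO(1))=A$. At a monomial ideal $I_\lambda$ its fiber character is $c_{q,t}(\lambda)$ up to the inversion convention for the torus weights. Thus $\CO(d_1,d_2)|_{(\lambda,\mu)}$ has character $c_{q,t}(\lambda)^{d_1}c_{q,t}(\mu)^{d_2}$ with respect to the linearization pulled back from the Hilbert schemes.

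The factor $(qt)^{f(d_1,d_2,k)}$ arises as a linearization mismatch. Under Corollary \ref{cor:image of bigraded pullback}, a section $\Delta_S$ of $\CO(1)$ on the big Hilbert scheme restricts to $\Delta_{S'}$ times the constant $\Delta_{S_k}(Z_k)$ normalization, from the cofactor expansion in the proof of Proposition \ref{prop:image of pullback}. Therefore the weight of the restricted section differs from that of the polynomial in $A^{m}_{\geq \cdot}(n)$ by the weight of $\Delta_{S_k}$, which is the character of $\det\C[x,y]/(x,y)^k$. By Lemma \ref{lem: triangular correction}, this is $(qt)^{\binom{k+1}{3}}$ per factor of $\CO(1)$ on the first Hilbert scheme. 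Similarly, each factor on the second Hilbert scheme contributes $(qt)^{\binom{k+2}{3}}$ via $(x,y)^{k+1}$. Multiplying out over the $d_1+d_2$ factors gives $(qt)^{f(d_1,d_2,k)}$. The isomorphism \eqref{eq:comb-H0} is then equivariant up to this twist, so
\[ \ch H^0(X_{k,k+1},\CO(d_1,d_2)) = (qt)^{f}\sum_{\mathcal{P}}q^{|\aaa|}t^{|\bbb|} \]
with the geometric linearization.

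I expect the main obstacle to be the sign and inversion conventions: whether the fiber character is $c_{q,t}(\lambda)$ or its inverse, and matching this with the convention in \eqref{eq: localization weight}, which uses cotangent weights and the rescaling of $A,B$ by $q^{-1},t^{-1}$. I would fix these by checking the case $n=1$, $k=0$, $d_1=d_2=1$ directly, and also by comparing with the $k=0$ identity of \cite[Section 4.2]{CGOT}. After that, the proof follows by combining the displays above.
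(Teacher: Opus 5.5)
Your proposal is correct and follows the same route as the paper. Like the paper, you combine Corollary~\ref{cor: euler char} with equivariant localization on the smooth variety $X_{k,k+1}$, using the fiber characters $c_{q,t}(\lambda)^{d_1}c_{q,t}(\mu)^{d_2}$, and you explain the factor $(qt)^{f(d_1,d_2,k)}$ as the twist of the linearization of $\CO(1)$ under $\iota_k$ and $\iota_{k+1}$ by $\det\C[x,y]/(x,y)^k$ and $\det\C[x,y]/(x,y)^{k+1}$ (Lemma~\ref{lem: triangular correction}). You also spell out details the paper leaves implicit: identifying $\CO(1)$ with $\det\CV$ at the fixed points, and justifying localization on the non-proper $X_{k,k+1}$ through properness over the affine quotient together with an attracting torus.
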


\begin{proof}
This follows from Corollary \ref{cor: euler char} and equivariant localization. The additional factor $(qt)^{f(d_1,d_2,k)}$ is explained by Lemma \ref{lem: triangular correction} since the maps $\iota_k$ and $\iota_{k+1}$ twist the equivariant structure on the line bundle $\CO(1)$ by the characters of $\det\C[x,y]/(x,y)^k$ and $\det\C[x,y]/(x,y)^{k+1}$ respectively.
\end{proof}

\begin{remark}
  Numerical experiment suggests that the main formula in theorem~\ref{thm:localization-f} holds under weaker assumptions on \(d_1,d_2\).
  It appears that the formula is true whenever \(d_1+d_2\ge 0\) and \(d_1k+d_2 (k+1)\ge 0\). We do not have a geometric explanation for this
  observation.
\end{remark}


\begin{thebibliography}{99}

\bibitem{ABCH} D. Arcara, A. Bertram, I. Coskun, J. Huizenga. The minimal model program for the Hilbert scheme of points on $\mathbb{P}^2$ and Bridgeland stability. Adv. Math. (235): 580-626, 2013.

\bibitem{Bayer} A. Bayer, H. Chen, and Q. Jiang. Brill-Noether theory of Hilbert schemes of points on surfaces. Int.
Math. Res. Not. IMRN, (10):8403--8416, 2024.

\bibitem{BC} A. Bertram, and I. Coskun. The Birational Geometry of the Hilbert Scheme of Points on Surfaces. Birational Geometry, Rational Curves, and Arithmetic, Simons Symposia (2013).



\bibitem{FrobBook} Brion, M., and  Kumar, S. (2005). Frobenius splitting methods in
  geometry and representation theory.  Birkhäuser Boston.

  
\bibitem{CGM} E. Carlsson, E. Gorsky, A. Mellit. The 
$\mathbb{A}_{q,t}$ algebra and parabolic flag Hilbert schemes. Mathematische Annalen 376, no. 3 (2020): 1303--1336.

\bibitem{CGOT} I. Cavey, E. Gorsky, A. Oblomkov, J. Turner. Newton-Okounkov bodies for nested Hilbert schemes. arXiv:2510.07420

\bibitem{Fogarty} J. Fogarty, Algebraic families on an algebraic surface. Am. J. Math, Vol. 90
(1968), p. 511--521.

\bibitem{Fogarty2} J. Fogarty. Algebraic families on an algebraic surface II: the Picard scheme
of the punctual Hilbert scheme. Am. J. Math, Vol. 95 (1973), p. 660--687.

\bibitem{GGS} N. Gonzalez, E. Gorsky, J. Simental. Smooth correspondences between quiver varieties. arXiv:2601.22287

\bibitem{GKO}  E. Gorsky, O. Kivinen, A. Oblomkov. The affine Springer fiber -- sheaf correspondence.
Advances in Mathematics 464 (2025), article 110143.

\bibitem{Haiman} M. Haiman. Hilbert schemes, polygraphs and the Macdonald positivity conjecture.
J. Amer. Math. Soc. 14 (2001), no. 4, 941--1006.

\bibitem{Haimanqt}   M. Haiman. $t,q$-Catalan numbers and the Hilbert scheme.
Discrete Math. 193 (1998), no. 1--3, 201--224.

\bibitem{NY1} H. Nakajima, K. Yoshioka. Perverse coherent sheaves on blow-up. I. A quiver description. In Exploring new
structures and natural constructions in mathematical physics. Collected papers of the conference upon the occasion of
the retirement of Professor Akihiro Tsuchiya, Nagoya, Japan, March 5--8, 2007, pages 349--386. Tokyo: Mathematical
Society of Japan, 2011.

\bibitem{NY2} H. Nakajima, K. Yoshioka. Perverse coherent sheaves on blow-up. II: Wall-crossing and Betti numbers
formula. J. Algebr. Geom., 20(1):47--100, 2011.



\end{thebibliography}
\end{document}